

\documentclass[12pt]{article}
\usepackage{epsfig,amsmath,amssymb,graphicx}
\usepackage{comment,fullpage}
\usepackage{authblk}
\usepackage{multirow}
\usepackage{hhline}

\newtheorem{lemma}{Lemma}[section]
\newtheorem{theorem}[lemma]{Theorem}
\newtheorem{proposition}[lemma]{Proposition}

\newtheorem{example}[lemma]{Example}

\newcommand {\qed}{\rule{2mm}{2mm}\medskip}
\newenvironment {proof}{\noindent{\bf Proof:}~}{~\qed}
\def\ZZ{{\mathbb Z}}

\newcommand{\br}[1]{\{#1\}}

\begin{document}


\title{ Tight Heffter Arrays Exist
for all Possible Values: \\ The Research Report}

\author[1]{Dan S.\ Archdeacon}
\author[2]{Tomas Boothby}
\author[1]{Jeffrey H.\ Dinitz}
\affil[1]{Dept. of Mathematics and Statistics,
University of Vermont,
Burlington, VT 05405
U.S.A.}
\affil[2]{Dept. of Mathematics,
     Simon Fraser University,
     Burnaby, BC \ V5A\ 1S6  Canada}

\maketitle

\begin{abstract} A tight Heffter array $H(m,n)$ is an $m \times n$ matrix with nonzero entries from $\mathbb{Z}_{2mn+1}$ such that {\em i}) the sum of the elements in each row and each column is 0, and {\em ii}) no element from $\{x,-x\}$ appears twice. We prove that $H(m,n)$ exist if and only if both $m$ and $n$ are at least 3. If all entries are integers of magnitude at most $mn$ satisfying every row and column sum is 0 over the integers and also satisfying $ii$) we call $H$ an integer Heffter array. We show integer Heffter arrays exist if and only if $mn \equiv 0,3 \pmod{4}$.  Finally, an integer Heffter array is shiftable if each row and column contains an the same number  of positive and negative integers. We show that shiftable integer arrays exists exactly when both $m,n$ are even.

This research report contains all of the details of the proofs.  It is meant to accompany the paper \cite{ABD}.
\end{abstract}

\section {Introduction}\label{introduction}

We begin with the general definition of Heffter arrays from \cite{A}.  A Heffter array $H(m,n;s,t)$ is an $m \times n$ matrix with nonzero entries from $\mathbb{Z}_{2ms+1}$ such that 
\begin{enumerate}
  \item 
each row contains $s$ filled cells and each column contains $t$ filled cells,
\item the elements in every row and column sum to 0 in $\mathbb{Z}_{2ms+1}$, and 
\item  for every $x \in \mathbb{Z}_{2ms+1} \setminus \{0\}$, either $x$ or $-x$ appears in the array.
\end{enumerate}

Obviously if $H$ is a Heffter array $H(m,n;s,t)$, then $H^T$ is a Heffter array $H(n,m;t,s)$.   The notion of a Heffter array $H(m,n;s,t)$ was first defined by Archdeacon in \cite{A}.  It is shown there that a Heffter array with a pair of special row and column orderings can be used to construct an embedding of the complete graph $K_{2ms+1}$ on a surface. This connection is given in the following theorem.

\begin{theorem}\label{heffter1}{\rm \cite{A}} Given a Heffter array $H(m,n;s,t)$ with compatible orderings $\omega_r$ of the symbols in the rows of the array and $\omega_c$ on the symbols in the columns of the array, then there exists an embedding of $K_{2ms+1}$ on an orientable surface such that every edge is on a face of size $s$ and a face of size $t$. Moreover, if $\omega_r$ and $\omega_c$ are both simple, then all faces are simple cycles. \end{theorem}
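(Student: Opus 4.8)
The plan is to encode the array as a $\mathbb{Z}_v$-symmetric, signature-free rotation system on $K_v$, where $v = 2ms+1 = 2nt+1$ and $V(K_v)=\mathbb{Z}_v$, and then to read the faces of the resulting embedding off the rows and columns of $H$. First I would turn each row into a base cycle: reading the $i$-th row of $H$ in the cyclic order prescribed by $\omega_r$ as $(a_1,\dots,a_s)$ with $a_1+\cdots+a_s=0$ in $\mathbb{Z}_v$, let $W_i^r$ be the closed walk $0,\ a_1,\ a_1+a_2,\ \dots,\ a_1+\cdots+a_s=0$, whose consecutive edges have ``lengths'' $a_1,\dots,a_s$; doing the same with the columns and $\omega_c$ produces closed walks $W_1^c,\dots,W_n^c$ of length $t$. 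Because $H$ is a Heffter array, the lengths used by $W_1^r,\dots,W_m^r$ constitute, up to sign, a transversal of the pairs $\{\pm x\}$ with each pair used exactly once; hence translating these walks over the action of $\mathbb{Z}_v$ decomposes $E(K_v)$ into closed walks of length $s$, and likewise the $W_j^c$ decompose $E(K_v)$ into closed walks of length $t$. In particular every edge of $K_v$ lies in exactly one translate of some $W_i^r$ and in exactly one translate of some $W_j^c$.

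Next I would build the rotation. Since the embedding is to be translation-invariant and signature-free (hence automatically orientable), it is determined by a single cyclic permutation $\rho$ of the $v-1$ darts at the vertex $0$, with the rotation at vertex $i$ being the translate $g \mapsto i+\rho(g-i)$. At $0$ the translates of $W_1^r,\dots,W_m^r$ passing through $0$ pair up the darts at $0$ into a perfect matching $M_r$ --- two darts being matched when they are the two edges incident to $0$ along such a walk --- and the $W_j^c$ likewise yield a matching $M_c$; these are well defined exactly because each relevant length is used once. I expect that ``$\omega_r$ and $\omega_c$ are compatible'' is precisely the condition that $M_r\cup M_c$ is a single $(v-1)$-cycle on the darts at $0$, and then reading the darts off along that cycle gives the required $\rho$.

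With $\rho$ in hand, I would trace faces. The rotation system automatically defines a cellular embedding of $K_v$ on a closed orientable surface, so it remains to identify the faces; by translation-invariance it suffices to examine those through $0$. Because corners alternate between the ``row corners'' coming from $M_r$ and the ``column corners'' coming from $M_c$ around every vertex, each face uses corners of only one of the two types, and the construction of $\rho$ forces a face built from row corners to follow a single translate of some $W_i^r$, and one built from column corners to follow a single translate of some $W_j^c$; so the faces are exactly these translates, of sizes $s$ and $t$ respectively. Together with the decomposition in the first step this shows every edge lies on exactly one face of size $s$ and one face of size $t$, and Euler's formula reads off the genus from $V=v$, $E=\binom{v}{2}$, $F=v(m+n)$. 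Finally, if $\omega_r$ is simple then for each $i$ the partial sums $0,\ a_1,\ a_1+a_2,\ \dots,\ a_1+\cdots+a_{s-1}$ are pairwise distinct and nonzero, so $W_i^r$ and each of its translates is a \emph{simple} $s$-cycle, and similarly with $\omega_c$; hence if both orderings are simple then all faces are simple cycles.

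The main obstacle is the middle step: checking that ``compatible'' is exactly the hypothesis under which $M_r\cup M_c$ is connected, so that a legitimate rotation at $0$ --- and therefore the whole $\mathbb{Z}_v$-symmetric embedding --- actually exists, and then verifying that face-tracing returns precisely the translated base cycles and does not fuse several of them into one larger face. Once $\rho$ is correctly described, the remaining work is bookkeeping with the $\mathbb{Z}_v$-action, the Heffter decomposition property, and Euler's formula.
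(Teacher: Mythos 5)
The paper you are looking at does not prove this theorem: it is stated with a citation to Archdeacon's paper \cite{A}, to which the authors also defer for the definitions of ``compatible'' and ``simple'' orderings, so there is no in-paper proof to compare against. That said, your sketch is essentially the argument of \cite{A}: develop the row and column partial-sum walks under the $\mathbb{Z}_{2ms+1}$-action to obtain the two cycle (closed-walk) decompositions of $K_{2ms+1}$, build a single translation-invariant rotation at $0$ from the two dart-matchings, with compatibility being precisely the condition that the rotation is one full cycle, and read the two face classes off the rows and columns. On the one step you flag as the obstacle: the clean way to see that face-tracing never fuses the two classes is to note that the $2ms$ darts at $0$ split into the $ms$ ``entry'' darts $(0,a)$ and the $ms$ ``negated-entry'' darts $(0,-a)$ (a genuine bipartition because the entries form a transversal of the pairs $\{x,-x\}$), that every $M_r$-edge joins a negated-entry dart to an entry dart while every $M_c$-edge joins an entry dart to a negated-entry dart, and that the rotation $\rho$ read off the alternating cycle therefore sends each negated-entry dart to its $M_r$-partner $\omega_r(a)$ and each entry dart to its $M_c$-partner $-\omega_c^{-1}(a)$; the face-tracing rule then follows each row walk forwards and each column walk backwards, so the faces are exactly the translates of the base walks, and $\rho^2$ restricted to one side of the bipartition is conjugate to $\omega_c^{-1}\omega_r$, which is why compatibility (that composition being a single $ms$-cycle) is exactly what makes $\rho$ a legitimate rotation.
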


Such an embedding as described in Theorem \ref{heffter1} is called an $\{s,t\}$-biembedding.  We refer the reader to \cite{A} for the definition of a simple ordering and the definition of compatible orderings and to \cite{GG} for a survey containing information on biembeddings of cycle systems.  In   \cite{ADS} we address the ordering problem in more detail. 
We also note that using Theorem \ref{heffter1} and the $3 \times n$  Heffter arrays constructed in this paper, Dinitz and Mattern \cite{DM}  proved that for every $n$, there exists a $\{3,n\}$-biembedding of the complete graph on $6n +1 $ vertices on an orientable surface.
 
In this paper we will not concern ourselves with the ordering problem or the embedding problem and will just concentrate on the construction of the Heffter arrays (which we feel are very interesting combinatorial objects in their own right).  In this paper we will concentrate on constructing Heffter arrays where $n=s$ (and necessarily $m=t$).  These arrays have the property that they have no empty cells.  We call these arrays {\em tight} Heffter arrays and for the remainder of this paper we will use the notation $H(m,n)$ to denote a tight Heffter array $H(m,n;n,m)$.  
In a companion paper \cite{ADDY}, we consider {\em square} Heffter arrays with empty cells, i.e. Heffter arrays $H(n,n;k,k)$ where $3\leq k < n$.

We now reiterate the definition of Heffter arrays that we will be using for the reminder of this paper.
A {\em Heffter array} $H(m,n)$ is an $m \times n$ array with entries from $\mathbb{Z}_{2mn+1}$ such that 1) every cell is filled, 2) every row and column sums to 0 modulo $2mn+1$, and 3) no element from $\{x,-x\}$ appears twice. For the third condition it is convenient to use modular representatives $-mn, -mn+1,\dots,-1,0,1,\dots, mn$. Note that if a Heffter array $H$ exists, then necessarily $m,n \ge 3$, also  the element 0 is not used in $H$. Heffter arrays represent a type of magic square where each number from the set $\{1,\dots,mn\}$ is used once up to sign. In Example \ref{first_example} we give  examples of two small  Heffter arrays. 

\begin{example}\label{first_example} A Heffter array $H(3,3)$ over $\mathbb Z_{19}$ and a  Heffter array $H(3,4)$ over $\mathbb Z_{25}$.
$$\begin{array}{|c|c|c|}\hline
-8 & -2 & -9 \\  \hline
7 & -3 & -4 \\ \hline
1 & 5 & -6 \\ \hline
\end{array} \hspace{1in}
\begin{array}{|c|c|c|c|} \hline
1 & 2 & 3 & -6 \\ \hline
8 & -12 & -7 & 11 \\ \hline
-9 & 10 & 4 & -5 \\ \hline
\end{array}
$$
\end{example}
 
\medskip
Our main goal in this paper is to prove the existence of Heffter arrays $H(m,n)$ for all necessary values of $m$ and $n$.
In the course of the proof two types of Heffter arrays will be especially helpful. An {\em integer Heffter array}  is a Heffter array $H(m,n)$ with integer entries from $\{-mn, \dots, mn\}$ such that 1) every row and column sums to 0 over the integers, and 2) no element from $\{x,-x\}$ appears twice. The $H(3,4)$ in Example \ref{first_example} is  an integer Heffter array while the $H(3,3)$ is not an integer Heffter array.

Let $H(m,n)$ be an arbitrary $m \times n$ array with integer entries $a_{i,j}$. The {\em support} of $H$ is the multiset $sup(H) = |a_{i,j}|$ as $i,j$ range over all entries. For example, if $H$ is an integer Heffter array, then $sup(H) = \{1,\dots,mn\}$. Define the {\em shift of $H$ by $k$ units} as the matrix $H \pm k$ whose $(i,j)$ entry is given by 
$$(H \pm k) (i,j)= \bigg\{ \aligned a_{i,j} + k, &\quad\quad \mbox{when }a_{i,j} > 0 \\ a_{i,j} - k, &\quad\quad \mbox{when }a_{i,j} < 0. \endaligned$$ 
We call  any  array of integers (in particular integer Heffter arrays) {\em shiftable} if it has the same number of positive entries as negative entries in each row and column. Denote an $m \times n$ shiftable Heffter array as $H_s(m,n)$.  If $A$ is a shiftable integer array, then $A \pm k$ has the same row and column sums as $A$. In particular if $H(m,n)$ is a shiftable integer Heffter array, then $H \pm k$ satisfies all requirements for an integer Heffter array, except that $sup(H\pm k) = \{k+1,\dots,k+mn\}$ where $sup(H)=\{1,\dots,mn\}$. Shiftable arrays provide very useful ingredients in building  Heffter arrays (see also \cite{ADDY}).

The following lemma gives necessary conditions for the existence of $H(m,n)$ and for the existence of $H(m,n)$ and  $H_s(m,n)$.

\begin {lemma}\label{necessary}  If there exists an  $H(m,n)$, then necessarily $m,n\geq 3$.  If there is an integer  $H(m,n)$, then  $mn \equiv 0,3$ (mod 4).  Furthermore,
if there exists an integer $H_s(m,n)$, then necessarily $m$ and $n$ are even (and so $mn \equiv 0$ (mod 4)).  \end{lemma}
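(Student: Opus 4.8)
The plan is to obtain each of the three conclusions by an elementary counting or parity argument; no constructions are involved. For the first claim, suppose an $H(m,n)$ exists; it suffices to show $m \ge 3$, since then $n \ge 3$ follows by applying the same reasoning to the transpose $H^T$, which is an $H(n,m)$. If $m=1$, each column is a single cell whose sum must be $0$, forcing that entry to be $0$ and contradicting that all entries are nonzero. If $m=2$, each column consists of two entries $a$ and $-a$ whose sum is $0$ in $\mathbb{Z}_{2mn+1}$; as $2mn+1$ is odd we have $a \neq 0$, so both $a$ and $-a$ occur in $H$, violating requirement 3). Hence $m \ge 3$.

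For the second claim, let $H$ be an integer $H(m,n)$, so its support is $\{1,2,\dots,mn\}$ and each entry has the form $\varepsilon_i\, i$ with $\varepsilon_i \in \{+1,-1\}$ and $i$ ranging over $1,\dots,mn$. Summing all entries row by row gives total sum $0$, so the positive and the negative entries split $\{1,\dots,mn\}$ into two subsets of equal sum; therefore $\sum_{i=1}^{mn} i = \tfrac{mn(mn+1)}{2}$ is even, i.e.\ $mn(mn+1) \equiv 0 \pmod 4$. Since $mn$ and $mn+1$ are consecutive integers, checking the residues of $mn$ modulo $4$ shows this holds precisely when $mn \equiv 0$ or $mn \equiv 3 \pmod 4$ (for $mn\equiv 1,2$ the product is only $2 \pmod 4$).

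For the third claim, suppose $H_s(m,n)$ exists. Each of its $m$ rows is completely filled, hence has $n$ entries, and by the definition of shiftable these are half positive and half negative, so $n$ is even; the identical argument applied to the $n$ columns, each with $m$ entries, yields that $m$ is even (which also gives $mn \equiv 0 \pmod 4$, consistent with the previous part). I do not expect any real difficulty here; the only spots requiring a moment's care are the use of the oddness of $2mn+1$ in the $m=2$ case, and reading requirement 3) as forbidding $x$ and $-x$ from both appearing rather than merely forbidding a repeated value.
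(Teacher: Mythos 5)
Your proof is correct and takes essentially the same elementary route as the paper: the same argument rules out $m,n\in\{1,2\}$, and the same row/column count forces $m,n$ even in the shiftable case. The only variation is in the middle claim, where you deduce that $\tfrac{mn(mn+1)}{2}$ must be even because the zero total sum splits $\{1,\dots,mn\}$ into two equal-sum parts, whereas the paper counts odd entries row by row; these are equivalent one-line parity checks yielding the same condition $mn\equiv 0,3 \pmod 4$.
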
  

\begin{proof}  Let $H$ be a Heffter array $H(m,n)$. First see that $m$ and $n$ are not 1 or 2 since $0 \notin H$ and for any $x$, both $x$ and $-x$ are not in $H$.  Now, assume the H is an integer Heffter array.  In order for each row of $H$ to sum to zero, each row must contain an even number of odd numbers.  Hence the entire array contains an even number of odd numbers.   Now, the support of  the $H(m,n)$ is the  set $S =\{1,2,  \ldots ,mn\}$.  There will be an even number of odd numbers in $S$ exactly when $mn \equiv 0,3$ (mod 4).

If H is an integer $H_s(m,n)$, then clearly $m$ and $n$ must be even in order to have the same number of positive and negative entries in each row and each column.  It follows that $mn \not\equiv 3$ (mod 4), hence $mn \equiv 0$ (mod 4). 
\end{proof}

In the following example we give shiftable Heffter arrays for some small orders.

\begin {example} \label{three_evens}
Three shiftable integer Heffter arrays: $H_s(4,4)$, $H_s(4,6)$, and $H_s(6,6)$.

\renewcommand{\arraycolsep}{2pt}
\begin{center}
\begin{minipage}{4.0cm}
{$\begin{array}{|c|c|c|c|} \hline
1&-2&-3&4 \\ \hline
-5&6&7&-8 \\ \hline
9&-10&-11&12 \\ \hline
-13&14&15&-16 \\ \hline
\end{array}$}
\end{minipage} 
\quad\quad
\begin{minipage}{6.0cm}
{$ \begin{array}{|c|c|c|c|c|c|} \hline
1&-2&3&-4&11&-9 \\ \hline
-7&8&-12&10&-5&6 \\ \hline
-13&14&-15&16&-23&21 \\ \hline
19&-20&24&-22&17& -18\\ \hline
\end{array}
$}
\end{minipage} 
\end{center}
\begin{center}
\begin{minipage}{6.0cm}
{$ \begin{array}{|c|c|c|c|c|c|} \hline
-1&5&2&-7&-9&10 \\ \hline
3&-4&-6&8&11&-12 \\ \hline
-21&22&-13&17&14&-19 \\ \hline
23&-24&15&-16&-18&20 \\ \hline
26&-31&-33&34&-25&29 \\ \hline
-30&32&35&-36&27&-28 \\ \hline
\end{array}
$}
\end{minipage} 
\end{center}
\end{example}
\renewcommand{\arraycolsep}{6pt}

We now give a full statement of the main result of this paper.

\begin{theorem} Let $m,n$ be integers at least 3. Then
\begin{enumerate}
\item There exists a Heffter array $H(m,n)$, 
\item There is an integer Heffter array $H(m,n)$ if and only if $mn \equiv 0,3 \pmod{4}$, and
\item There is a shiftable Heffter array $H_s(m,n)$ if and only if both $m$ and $n$ are  even.
\end{enumerate}
\end{theorem}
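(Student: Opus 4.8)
The three necessity statements are precisely Lemma~\ref{necessary}, so all the work is in the sufficiency directions. The tool I would set up first is a \emph{composition lemma}: if $A$ is an $m\times n$ integer array with $sup(A)=\{1,\dots,mn\}$ in which every row and column sums to $0$ and no $\{x,-x\}$ repeats, and $B$ is a \emph{shiftable} array of size $m\times n'$ with the same two properties, then juxtaposing $A$ with the shifted copy $B\pm mn$ (whose support is $\{mn+1,\dots,mn+mn'\}$) yields an $m\times(n+n')$ array with all the same properties and support $\{1,\dots,m(n+n')\}$; the vertical analogue holds as well, and two shiftable blocks may also be composed this way. In short, shiftable blocks can be appended freely to a single possibly-non-shiftable ``core''. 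This lemma, together with the observation that every even integer $\ge 4$ is a nonnegative combination of $4$ and $6$, is what reduces each part of the theorem to a short list of explicit small arrays.

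For part (3), since a shiftable array has even row and column count and we want both $m,n$ even, it suffices to build $H_s(4,n)$ and $H_s(6,n)$ for all even $n\ge 4$ and then stack copies of these. Both of those follow from the composition lemma applied horizontally to copies of $H_s(4,4)$, $H_s(4,6)$, $H_s(6,6)$, all of which are exhibited in Example~\ref{three_evens}. So part (3) is essentially bookkeeping with the composition lemma.

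For part (2), the case $m,n$ both even is immediate from part (3) (a shiftable integer Heffter array is in particular an integer Heffter array). If exactly one of $m,n$ is odd, say $m$ odd and $n\equiv 0\pmod 4$ (the transpose handles the other orientation), I would construct directly one non-shiftable integer $H(3,n)$ and one integer $H(5,n)$ for each $n\equiv 0\pmod 4$; then $H(m,n)$ for all odd $m\ge 3$ is obtained by stacking, using $H(3,n)$ or $H(5,n)$ as the core and, for odd $m\ge 7$, writing $m-3$ as a combination of $4$'s and $6$'s and appending the corresponding shiftable $H_s(4,n)$, $H_s(6,n)$ below it (these exist because $n$ is even). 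The integer $H(3,4)$ in Example~\ref{first_example} is a typical seed here. The genuinely hard subcase is $m,n$ \emph{both odd}: then $mn\equiv 3\pmod 4$, which forces exactly one of $m,n$ to be $\equiv 3$ and the other $\equiv 1\pmod 4$, and since $n$ is now odd there is no shiftable block of width $n$ at all, so the composition lemma gives no leverage. I expect this to require genuinely direct constructions for the small-row families $H(3,n)$ with $n\equiv1\pmod4$ and $H(5,n)$ with $n\equiv3\pmod4$ (and their transposes), together with an additional recursive or bordered construction that splits the extra rows into pieces with matching nonzero column sums on complementary supports, so as to reach all odd $m,n\ge 5$. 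This is the main obstacle and is where essentially all of the explicit combinatorics of the proof will be concentrated.

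Finally, part (1). Whenever $mn\equiv 0,3\pmod 4$, an integer Heffter array built in part (2) is automatically a Heffter array over $\mathbb{Z}_{2mn+1}$: reducing mod $2mn+1$ creates no new $\{x,-x\}$ coincidence because all magnitudes are at most $mn$, and zero row/column sums over $\mathbb{Z}$ remain zero mod $2mn+1$. When $mn\equiv 1,2\pmod 4$ an integer Heffter array cannot exist (Lemma~\ref{necessary}), so one must work genuinely modulo $2mn+1$; the key observation is that replacing an entry $x$ by its negative $2mn+1-x$ reverses the parity of that entry, which annihilates the parity obstruction and lets the constructions above be adapted, concretely by altering a bounded number of entries of a near-integer array by $\pm(2mn+1)$ (the non-integer $H(3,3)$ over $\mathbb{Z}_{19}$ in Example~\ref{first_example} is the smallest instance). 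I expect these modular adaptations to be routine once the integer constructions are in hand, and for the odd-by-odd direct constructions to be arranged so that a single family covers both $mn\equiv 3$ (giving part~(2)) and $mn\equiv 1$ (giving part~(1)); thus the crux of the entire theorem is the odd-by-odd case.
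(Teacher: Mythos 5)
Your architecture matches the paper's in outline: necessity is exactly Lemma~\ref{necessary}, the even-by-even case is tiled from the three shiftable blocks of Example~\ref{three_evens} (this is Theorem~\ref{evenbyeventhm}), and the case $m$ odd, $n\equiv 0\pmod 4$ is obtained by stacking an integer $H(3,n)$ or $H(5,n)$ core over a shifted shiftable block (Theorem~\ref{oddby4k_theorem}). But as a proof the proposal has two genuine gaps. First, the constructions you defer --- the families $H(3,n)$ and $H(5,n)$ for every residue of $n$ modulo $8$, and the bordered (``L'') constructions for $m,n$ both odd --- are not routine details: they constitute essentially the entire content of the theorem (Sections~\ref{3byn}--\ref{ells}), and your plan gives no mechanism for producing them beyond the correct but unexecuted observation that Skolem-type partitions of $\{1,\dots,3n\}$ into triples $\{a,b,a+b\}$ supply candidate columns.

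Second, and more seriously as a matter of logic, your claim that the non-integer cases of part~(1) follow by ``routine modular adaptations'' of part~(2) fails for $m$ odd, $m\ge 7$, $n\equiv 2\pmod 4$, and for the both-odd cases with $mn\equiv 1\pmod 4$. Your composition lemma requires the core to have row and column sums zero over $\mathbb{Z}$; a non-integer $H(3,n)$ has a row and a column summing to $\pm(6n+1)$, and $6n+1$ is not congruent to $0$ modulo the new modulus $2mn+1$, so stacking such a core over shiftable blocks destroys the Heffter property. The paper resolves this (Theorem~\ref{oddby4k+2_theorem}) with a variable corner tile whose exceptional row and column sums equal $2x+1$ where $x=mn$ is a parameter of the \emph{final} dimensions, and the same device underlies the L-constructions of Section~\ref{ells}. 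Any correct execution of your plan must build this dependence on the target modulus into the core from the outset; it cannot be retrofitted by perturbing an array that was designed to work modulo $6n+1$ or $10n+1$.
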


The proof will consist of several cases.  In Section \ref{evens} we consider the case when both $m$ and $n$ are even.  In Section \ref{3byn} we cover the case when $m=3$ and in Section \ref{5byn} we do the case of $m=5$. In both of those sections it is necessary to consider 8 cases for $n$ depending on its congruence modulo 8.  Next we consider the cases when $m$ is odd and $n$ is even, this is covered in Section \ref{extending}.  Finally, in Section \ref{ells} we use an L-construction to tackle the case when both $m$ and $n$ are odd.  We summarize the cases again in the conclusion, Section \ref{Conclusion}.

\section {H($m,n$) with both $m$ and $n$ even}\label{evens}

In this section we construct Heffter arrays $H(m,n)$ for both $m$ and $n$ even. These arrays are all both integer and shiftable. Example \ref{three_evens} gives the three smallest examples.
The desired $H(m,n)$ will be constructed using these three small arrays as subarrays, called {\em tiles}. The entries in a tile may be shifted so as to keep all entries in the entire array unique up to sign.

\begin{theorem}\label{evenbyeventhm} If $m$  and $n$ are even numbers with $m,n \geq 4$, then there exists a shiftable integer $H_s(m,n)$. \end{theorem}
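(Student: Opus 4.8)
The plan is to build $H_s(m,n)$ for all even $m,n\ge 4$ by tiling an $m\times n$ grid with shifted copies of the three small shiftable arrays $H_s(4,4)$, $H_s(4,6)$, $H_s(6,6)$ from Example~\ref{three_evens}. Since every even integer $\ge 4$ is a sum of $4$'s and $6$'s, we may write $m = 4a + 6b$ and $n = 4c + 6d$ (with $a,b,c,d\ge 0$), and partition the rows into blocks of sizes $4$ and $6$ and the columns likewise, so the array decomposes into a grid of rectangular blocks, each of dimensions $4\times 4$, $4\times 6$, $6\times 4$, or $6\times 6$. Into each block we place a copy of the corresponding small array (using the transpose of $H_s(4,6)$ for a $6\times 4$ block). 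Because each small array is shiftable, the assembled array will automatically be shiftable: each row of the big array is a concatenation of rows of small arrays, each of which already has equal numbers of positives and negatives, and similarly for columns. What remains is to choose shifts so that $sup(H)=\{1,\dots,mn\}$ exactly and the row/column sums are $0$.

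The key mechanism is the shift operation: if a block $B$ is a shiftable integer array with $sup(B)=\{1,\dots,k\}$, then $B\pm r$ is still shiftable with the same zero row/column sums and $sup(B\pm r)=\{r+1,\dots,r+k\}$. So I would order the blocks $B_1,B_2,\dots,B_N$ (say in row-major order over the block grid), let $k_i = |B_i|$ be the number of cells in block $B_i$, and replace $B_i$ by $B_i \pm r_i$ where $r_i = k_1 + k_2 + \dots + k_{i-1}$. Then the supports of the shifted blocks are the consecutive intervals $\{1,\dots,k_1\}$, $\{k_1+1,\dots,k_1+k_2\}$, etc., which partition $\{1,\dots,mn\}$ since $\sum k_i = mn$. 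Every row sum and column sum of the full array is a sum of (zero) row/column sums of the constituent shifted blocks, hence $0$ over the integers; and since $0\le r_i \le mn$, all entries lie in $\{-mn,\dots,mn\}$ with no repeats up to sign. Thus $H$ is a shiftable integer Heffter array $H_s(m,n)$. (One should double-check the base cases $m$ or $n$ equal to $4$ or $6$ where the block grid is trivial or has a single row/column of blocks — these reduce directly to Example~\ref{three_evens} after shifting.)

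The main obstacle — really the only thing to verify carefully — is that $\{4,6\}$ generates all even integers $\ge 4$ under addition with nonnegative coefficients, i.e. that every even $m\ge 4$ can be written $4a+6b$; this is elementary ($4=4$, $6=6$, $8=4+4$, $10=4+6$, and thereafter add $2\cdot 4$ or use $6$'s), but it is what licenses the tiling. Everything else is bookkeeping: that the shifts make the supports disjoint and exhaust $\{1,\dots,mn\}$, and that shiftability and the zero-sum property are inherited from the tiles. I do not expect the embedding-style complications of Theorem~\ref{heffter1} to intrude here, since we are only constructing the array, not orderings.

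\medskip
\noindent\emph{Remark.} This same tiling idea will presumably be reused in later sections (e.g.\ Section~\ref{extending} when $m$ is odd and $n$ even) by combining a shiftable even-by-even block with a non-shiftable strip whose support is shifted to sit above the block's support; the shiftability of the even-by-even part is exactly what makes it a safe ingredient to shift arbitrarily.
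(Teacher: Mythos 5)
Your proposal is correct and is essentially the paper's own argument: the paper likewise tiles the $m\times n$ grid with shifted copies of the three small shiftable arrays $H_s(4,4)$, $H_s(4,6)$ (and its transpose), and $H_s(6,6)$, choosing shifts so the supports form consecutive intervals partitioning $\{1,\dots,mn\}$. The only difference is organizational — the paper splits into three cases by the residues of $m,n$ modulo $4$ and builds horizontal strips first, whereas your uniform row-major cumulative-shift bookkeeping handles all cases at once.
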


\begin{proof} We begin by assuming that $m \equiv n \equiv 0 \pmod{4}$. First form a $4 \times n$ array $A$ by horizontally juxtaposing $n/4$ different $4 \times 4$ arrays $A_i$
$$A = \begin{array}{|c|c|c|c|c|} \hline \stackrel{\ }{A_1} & A_2 & A_3 & \dots & A_{n/4} \\ \hline \end{array}.$$
Let $H$ be the shiftable $4 \times 4$ array given in Example \ref{three_evens}. Set $A_i = H \pm 16(i-1)$. Observe each row and column of $A$ sum to 0, since they do in each subarray. Also by our choice of shifts $sup(A) = \{1,\dots,4n\}$. Hence $A$ is a shiftable Heffter array $H_s(4,n)$. To construct our desired $H(m,n)$, we vertically stack $m/4$ copies of $A$ with the $i^{th}$ copy shifted to $A \pm 4n(i-1)$. Note that the final array is shiftable.\medskip

Next assume that $m \equiv 0 \pmod{4}$ and $n \equiv 2 \pmod{4}$. Again form the $4 \times n$ array
$$A = \begin{array}{|c|c|c|c|c|} \hline 
{B} & A_1 & A_2 & \dots & A_{(n-6)/4} \\ \hline \end{array}.$$
Set $B$ equal to the shiftable $H_s(4,6)$  and set each $A_i$ to the shiftable $H_s(4,4)$, both from Example \ref{three_evens}. By suitable shifts on $B$ and the $A_i$'s as before we can make $sup(A) = \{1,\dots,4n\}$. To construct our desired $H_s(m,n)$ we vertically stack $m/4$ copies of $A$ appropriately shifted.\medskip

Finally we assume that $m \equiv n \equiv 2 \pmod{4}$. Construct the $6 \times n$ array
$$A = \begin{array}{|c|c|c|c|c|} \hline 
{C} & B_1^T & B_2^T & \dots & B_{(n-6)/4}^T \\ \hline \end{array}.$$
Set $C$ to be the  $H_s(6,6)$   and set each $B_i^T$ as the transpose of the  $H_s(4,6)$, both from Example \ref{three_evens}. By suitable shifts on $C$ and $B_i$'s we can make $sup(A) = \{1,\dots,6n\}$. To construct our desired $H(m,n)$ we fill in the first 6 rows with the $6 \times n$ shiftable array just constructed. The remaining empty cells form an $(m-6) \times n$ array which can be filled with a shiftable array from an earlier modulus case.  \end{proof}

\section { ${3 \times n}$ Heffter arrays}\label{3byn}

The cases where one or both of $m$ and $n$ is odd are considerably more difficult. In this section we start the investigation by constructing $3 \times n$ Heffter arrays. These arrays will be integer when $n$ satisfies the necessary condition $n \equiv 0,1 \pmod{4}$. By the nature of the construction it is convenient to consider the residue of $n$ modulo 8.  We begin with the case of  $H(3,n)$ when $n\equiv 0 \pmod 8$ to illustrate the construction technique.  

\subsection{ \label{3by8} Constructing an  H(3,{\em n}) with 
{\em n} $\equiv$ 0 (mod 8)}

\begin{proposition}\label{3by8_theorem} There exists a $3 \times n$ integer Heffter array for all $n \equiv 0 \pmod{8}$.\end{proposition}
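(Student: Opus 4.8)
The plan is to realize $H(3,n)$ as a horizontal concatenation of $3\times 4$ blocks, which I will call \emph{tiles}. Write $n=8k+8$ with $k=(n-8)/8\ge 0$; then there are exactly $n/4=2k+2$ tiles. I would use a short menu of tile shapes whose entries are written as affine functions of $k$: a distinguished seed tile $S^0$, together with a one‑parameter family $T^0_r$ whose entries are affine in both $k$ and $r$. The array is built by placing $S^0$ first and then $T^0_{r_1},\dots,T^0_{r_{2k+1}}$ for a carefully chosen list of parameter values; the smallest case $n=8$ (so $k=0$) uses just $S^0$ and a single tile $T^0_0$ and can be written down outright to illustrate the pattern. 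Since $3n=24k+24$ equals the number $3(8k+8)$ of cells, what is really needed is a partition of $\{1,\dots,3n\}$ into the magnitudes of the entries, one per cell.

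Three conditions must then be checked. First, for the column sums, I would arrange that every tile has all four of its columns summing to $0$ over $\mathbb Z$ — equivalently, that each column of each tile is a signed ``sum triple'' $a+b+c=0$ — so that every column of $H$, being a column of some tile, automatically sums to $0$; this is a routine linear identity in $k$ and $r$. Second, the row sums: individual tiles do \emph{not} have zero row sums, so this is a genuinely global constraint. I would check that the seed tile together with one specially chosen member of the family already has all three of its row sums equal to $0$ (this is exactly what happens for $n=8$), and that the parameters $r_i$ of the remaining tiles can be chosen so that their contributions to each of the three rows cancel; again this reduces to a linear computation. Third, the support: one must verify that the $24k+24$ magnitudes $|a_{i,j}|$ occurring across all the tiles are pairwise distinct and together equal $\{1,2,\dots,24k+24\}=\{1,\dots,3n\}$. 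Once this is done, the remaining requirement — that no element of $\{x,-x\}$ is used twice — follows immediately, because the magnitudes are all distinct, and the entries then automatically lie in $\{-3n,\dots,-1,1,\dots,3n\}$, so the array is an integer Heffter array.

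The support condition is where the real work lies, and I expect it to be the main obstacle. The entries of the generic tile $T^0_r$ are affine in $r$ with differing slopes — some of slope $1$, some of slope $2$ — so as $r$ runs over its intended range the slope‑$1$ entries trace out a block of consecutive integers while the slope‑$2$ entries trace out every other integer in a block twice as long; these one‑parameter families of values, together with the twelve fixed magnitudes contributed by $S^0$, must interlock to partition $\{1,\dots,3n\}$ with neither gaps nor repetitions. Arranging this partition while staying consistent with the parameter values already forced by the row‑sum cancellation is the delicate point: it comes down to careful bookkeeping of which sub‑intervals, and which residue classes modulo the relevant slope, each of the twelve entry‑expressions sweeps out, and checking that these are pairwise disjoint and exhaustive.
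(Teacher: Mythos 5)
Your overall strategy is the right one, and in fact it is the paper's own strategy: build $H(3,8k+8)$ by concatenating parametrized tiles whose entries are affine in $k$ and in a tile parameter, arrange that every column of every tile sums to $0$ so the column condition is automatic, let the nonzero row sums of the tiles cancel globally, and then verify that the magnitudes partition $\{1,\dots,3n\}$. (The authors even record that they first worked with $3\times 4$ tiles of exactly the shape you describe before pairing them into $3\times 8$ blocks.) But what you have written is a plan, not a proof. The proposition is an existence statement whose entire content is the explicit construction, and you never exhibit the tiles: $S^0$ and $T^0_r$ are named but their entries are not given, the ``carefully chosen list of parameter values'' $r_1,\dots,r_{2k+1}$ is not specified, and consequently none of the three conditions you correctly identify can actually be checked. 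Most importantly, you yourself flag the support condition --- that the $24k+24$ magnitudes are pairwise distinct and exhaust $[1,24k+24]$ --- as ``where the real work lies'' and ``the main obstacle,'' and then leave it entirely undone. That verification is not a routine afterthought; in the paper it occupies the bulk of the argument (a careful decomposition of the entries of the sporadic tile and of the family $A_r$, $0\le r\le k-1$, into intervals, including an interleaving of four arithmetic progressions of common difference $4$ to fill the middle range $[4k+7,8k+8]$).

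There is also a concrete reason to worry that your specific $3\times 4$ route, with a single one-parameter family and row-sum cancellation achieved by a clever choice of the $r_i$, may not go through as stated: the authors report that ``an apparent modular condition'' obstructed building the array from one family of $3\times 4$ tiles unless one is additionally allowed to negate whole tiles, and they circumvented this by fusing pairs of $3\times 4$ tiles into a $3\times 8$ family $A_r$ whose members each contribute the \emph{same} row-sum vector $[-4,2,2]$, cancelled against a single $3\times 8$ sporadic tile with row sums $[4k,-2k,-2k]$. So the delicate point you defer is known to be genuinely delicate. To turn your proposal into a proof you would need to write down the tiles explicitly, prove the column and row-sum identities, and carry out the interval bookkeeping for the support --- at which point you would essentially have reproduced the paper's argument.
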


\begin{proof} We write $n=8k+8$ with $k \geq 0$.  Our desired array will be built from $k+1$ {\em tiles} each of which is a $3 \times 8$ subarray (for typesetting purposes we write the transpose of each tile here and throughout the remainder of the paper). It has a sporadic tile $A$ and a family of tiles $A_r$, $r = 0,\ldots,k-1$ where 

\[ A^T = \left[\begin{smallmatrix}
-12k - 13&4k + 4&8k + 9\\
-10k - 11&-8k - 7&18k + 18\\
4k + 6&18k + 17&-22k - 23\\
4k + 3&18k + 19&-22k - 22\\
10k + 10&4k + 5&-14k - 15\\
-4k - 8&-18k - 16&22k + 24\\
12k + 14&-2&-12k - 12\\
-1&-20k - 20&20k + 21\end{smallmatrix}\right] \mbox{ and }
A_r^T = \left[\begin{smallmatrix}
8k + 2r + 10&8k - 4r + 5&-16k + 2r - 15\\
-8k + 4r - 8&-16k - 2r - 16&24k - 2r + 24\\
14k - 2r + 14&-4k + 4r - 2&-10k - 2r - 12\\
-4k + 4r - 1&-18k - 2r - 20&22k - 2r + 21\\
-8k - 2r - 11&-8k + 4r - 3&16k - 2r + 14\\
8k - 4r + 6&16k + 2r + 17&-24k + 2r - 23\\
-14k + 2r - 13&4k - 4r&10k + 2r + 13\\
4k - 4r - 1&18k + 2r + 21&-22k + 2r - 20\end{smallmatrix}\right].
\]

The row sums of $A$ are $[4k,\, -2k,\, -2k]$ and the column sums of $A$ are 
$[0,\, 0,\, 0,\, 0,\, 0,\, 0,\, 0,\, 0]$. For each $0\leq r\leq k-1$ we see that the row sums of  
$A_r$ are $[-4,\, 2,\, 2]$ while the column sums are $[0,\, 0,\, 0,\, 0,\, 0,\, 0,\, 0,\, 0]$.
So each of these tiles are column Heffter but not row Heffter. Now concatenate these tiles into a $3 \times n$ array 
$$ H =\begin{array}{|c|c|c|c|c|c|} \hline 
A & A_0& A_1& \cdots &   A_{k-2}& A_{k-1} \\ \hline \end{array}.$$

We claim that $H$ is an integer Heffter array $H(3,n)$. First, since each tile is column Heffter, so is $H$. Second, the sum of the rows of the $A_r$'s is $k \times [-4,\, 2,\, 2] =[-4k,\, 2k,\, 2k] $ which when added to the row sums in $A$ gives that every row sum is indeed 0, as desired.

It only remains to check that the support of $H$ is indeed the set $\{1,2, \ldots, 24k + 24\}=\{1,2,\ldots 3n\}$.  In Section \ref{justify} we will explain how this check was coded into a program in Sage and automated.  We will return to this particular case after that discussion.
\end{proof} 

\subsection {\label{construction} Some history and how the tiles were constructed}

Proposition \ref{3by8_theorem} establishes the existence of integer $H(3,n)$ for $n \equiv 0 \pmod{8}$ using a difference method displayed as tiles. But where did these tiles come from? In this subsection we discuss the relationship between these Heffter arrays and the first Heffter difference problem as well as with the Skolem sequences underlying their construction. This is actually a very abbreviated discussion - far more details can be found in \cite{Tom-thesis}.  

 {\em  Heffter's first difference problem} asks if the integers $\{1,\dots,3n\}$ can be partitioned into $n$ triples $\{a,b,c\}$ such that either $a + b = c$ or $a + b + c = 6n+1$. If so, then the set of all triples $T = \{0, a,a+b\}$ is a $(6n+1,3,1)$ difference family, that is, if each the $n$ triples $T$ are developed modulo $6n+1$, the result is a Steiner triple system. The columns of a $3 \times n$ Heffter array form a solution to the first Heffter difference problem. If the Heffter array is integer, then all column sums satisfy $a + b = c$ for a suitable ordering of the entries.  A solution to Heffter's first difference problem was given by Peltesohn \cite {Pelt} in 1939.

A {\em Skolem sequence} of order $n$ is a sequence $S = (s_1,s_2,\dots,s_{2n})$ of $n$ integers satisfying 1) every number $1 \le k \le n$ appears exactly twice, first in position $\ell(k)$ and second in position $r(k)$, and 2) for all $k$, $r(k) - \ell(k) = k$. An example of a Skolem sequence of order 5 is $S_5 = (1,1,3,4,5,3,2,4,2,5)$. Skolem sequences are known to exist if and only if $n \equiv 0,1 \pmod{4}$ (see  \cite{X1}). 

Skolem sequences can be used to construct a solution to the first Heffter difference problem and hence provide candidate columns for an $H(3,n)$. Given a Skolem sequence of order $n$, for each $k$ with $1\leq k \leq n$, form a triple $T_k =\{a_k,b_k,c_k\} = \{k,\ell(k)+n,r(k)+n\}$. The $a_k$'s range over $\{1,\dots,n\}$ while $\ell(k)+n,r(k)+n$ range over $\{n+1,\dots,3n\}$. Moreover, $k + (\ell(k) + n) = (r(k) +n)$, so these triples solve Heffter's first difference problem on the set $\{1,\dots,3n\}$. 

A suitable Skolem sequence  will form the basis for the columns of our $3 \times n$ Heffter array. But each possible column can be placed in the array in 12 different ways (order the elements in 6 ways and/or negate all). We need to order each of the columns so that the row sums are zero.

\medskip
We are now ready to describe our how our $3 \times 8$ tiles were found.

\begin{enumerate}
\item Given a Skolem sequence $S$ of order $n$, choose a subsequence $S'$ of the entries such that  each element appears either 0 or twice.
\item Form the resulting sets with $a + b = c$ into sets of 8 triples each forming a tile (so these are $3\times 8$ tiles), where every element, up to sign, appears in at most one tile and where the three row sums $[s_1,s_2,s_3]$ are the same for all the tiles, preferably with  $s_1 =s_2 =s_3 =0$.  (We use specially-constructed Skolem sequences which make this step quite natural. See \cite{Tom-thesis} for details),
\item Call the remaining $3n_s$ elements {\em sporadic} -- these are the elements not appearing in $S'$.  For each of the sporadic elements place either $x$ or $-x$ in a $3 \times n_s$ {\em sporadic tile} with row sums determined so that the sum of rows of the sporadic tile plus all of the other tiles equals $0$ (again note that if each tile has all three row sums equal to $0$, then so will the sporadic tile),
\item Juxtapose the sporadic tile with the tiles found earlier to form the resulting matrix.
\end{enumerate}

Constructing the array in this manner makes it much easier to check it satisfies the Heffter properties.

It is possible to use $3 \times 4$ tiles, rather than $3 \times 8$ ones.  However, an apparent (uninvestigated) modular condition prevented us from constructing a family Heffter Arrays of size $4k+n_s$ for all $k>0$.  The ``Skolem-like'' sequences were constructed to contain four large families of entries and several sporadic entries; it happens that there is a particularly natural construction for $3 \times 8$ tiles using the entries from these large families (again -- much more details about this step can be found in \cite{Tom-thesis}.  The remaining sporadic entries, together with the entries of zero or one $3 \times 4$ tile, can be placed into a $3 \times (n_s+4\varepsilon)$ sporadic tile.  In the end, we were able to rearrange the entries using a single family of $3 \times 4$ tiles (as well as the sporadic tile), provided we could chose a sign for each tile, however in the constructions below we will present them as  $3 \times 8$ arrays.

We were able to keep the number of sporadic entries between 12 and 33, i.e., the sporadic tile was of size $3 \times r$ for $4 \le r <12$. Several congruences examples required small examples that did not fit the general pattern.  These were found by computer search, with the approach outlined below.
\begin{itemize}
 \item Compute a list of all triples $(a,b,a+b)$ where $a,b,a+b$ are positive representatives of entries in the sporadic tile.
 \item Use an EXACTCOVER solver to produce a random partition of our entries into triples $(a_i, b_i, a_i+b_i)$ for $1 \leq i \leq r$.
 \item Compute all vector sums
 \[
  \sum_{i=1}^{\lfloor r/2\rfloor} (x_i, y_i, z_i)
 \qquad
 \text{ and }
 \qquad
  \sum_{i=\lceil r/2\rceil}^{r} (x_i, y_i, z_i)
 \]
 where $\{x_i, y_i, z_i\} \in \{\{a_i,b_i,-a_i-b_i\},\{-a_i,-b_i,a_i+b_i\}\}$.
 \item Find a pair of vector sums $(x,y,z), (u,v,w)$ corresponding to $1 \leq i \leq \lfloor r/2\rfloor$ and $\lceil r/2\rceil \leq i \leq r$ respectively so that $(x+u,y+v,z+w)$ is the negation of the row sums of our $3 \times 8$ tiles.  If no such pair exists, select another partition from the EXACTCOVER solver.
\end{itemize}
These computations are performed in the ring $\ZZ[k]$ or when necessary, $\ZZ[k]/(6z+1)\ZZ$, with $z = 4k+n_s$ to ensure that the sporadic block works for all $k$.

The approach relies heavily on finding a Skolem sequence of order $n$. These exist if $n \equiv 0,1 \pmod{4}$,  exactly the cases for integer solutions. However in the other two cases we can use a related Skolem-type sequence.

A {\em $K$-near Skolem sequence} of order $n$ is a sequence $S = (s_1,s_2,\dots,s_{2n-2})$ of $n-1$ integers satisfying 1) every number $1 \le j \le n$, $j \notin K$ appears exactly twice, first in position $\ell(j)$ and second in position $r(j)$, and 2) for all $j\notin K$, $r(j) - \ell(j) = j$. An example of a $\{4\}$-near Skolem sequence of order 7 is $S_7^4 = (1,1,6,3,7,5,3,2,6,2,5,7)$. A $\{k\}$-near Skolem sequences of order $n$ exist  if and only if $n \equiv 0,1 \pmod{4}$ and $k$ is even, or $n \equiv 2,3 \pmod{4}$ and $k$ is odd \cite{S}.

To find non-integer $3 \times n$ arrays with $n \equiv 2,3 \pmod{4}$ we use a $\{2\}$-near Skolem sequence of order $n$, add in the column $(2,3n-1,3n)$, mark these three elements as sporadic, and continue as before.

To find integer $5 \times n$ arrays, we use $\br{1}$-near and $\br{1,2}$-near Skolem sequences to produce triples of the form $(j+1,\ell(j)+x,-\ell(j)-j-x)$ whose sum is $1$.  To these we append pairs $(y,-y-1)$, bringing the sum to zero.  Since $\br{1}$-near Skolem sequences exist for $n \equiv 1,2 \pmod{4}$, and we are able to cover the interval $[1,10n]$ with these columns.  For the other two cases, we use $\br{1,2}$-near Skolem sequences and the sporadic column $(1,-2,3,5n,5n-1)$ whose sum is $10n+1$.  Given these sets of columns, we find Heffter arrays similar to the $3 \times n$ cases; with a natural family of $5 \times 8$ blocks together with a brute-forced sporadic block.

All together, we need $\emptyset$-, $\{2\}$-, $\{1\}-$, and $\{1,2\}$-near Skolem sequences for all but a few small orders.  Constructions of these sequences were found through a combination of optimism, dedication, and computation.  The most dedicated reader could infer those constructions by reverse-engineering our Heffter arrays.  As these sequences are, in the end, tangential to our main result, we will not spend any additional time or space describing them or how we found them. Full details are availible in \cite{Tom-thesis}.

The remaining cases were found by examining all $2 \times 4$ blocks of the form 
\[
 \left[\begin{array}{rrrr}
  x+a & -x-c &  x+e & -x-g \\
 -x-b &  x+d & -x-f &  x+h 
 \end{array}\right]
\]
where $x$ is variable and $\{a,b,c,d,e,f,g,h\} = [1,8]$.  Certain of these blocks, together with the strip
\[
 \left[\begin{array}{rrrrr}
  x+1 & -x-2 &  -x-3 & x+4 \\
 \end{array}\right]
\]
are used to construct sets $7 \times 4$ and $9 \times 4$ tiles whose row and column sums are zero.  These tilesets provide enough flexibility to admit arbitrary-sized constructions.  Namely, we were able to find $7 \times 7$, $7 \times 9$, and $9 \times 9$ sporadic blocks whose row and column sums are zero (in modulus if necessary).

\subsection {\label{justify}Automating the verification}

In this section we will discuss the method which was used to verify the correctness of the constructions for all the cases.  Obviously -- first the constructions were found in a manner discussed in the prior section.  Then a Python program was written which applied the constructions and for every $m,n \geq 3$ output an $H(m,n)$ which was tested for correctness (program available on request).  Later, we repurposed that code to use Sage to generate symbolic expressions for the block constructions that it produced.   The Sage code was then enhanced to produce the \LaTeX \, source for the subsections which follow.  (We think that the output is rather nicer than one expects of a computer generated proof.)

Here, we describe how the code generates this output so that the reader may verify the result.  First, given a purported Heffter array $H=H(m,n)$ the code generates the tiles and computes their row and column sums as polynomials in several variables (modulo $2mn+1$ if and only if necessary).  Most of these tiles will  have row and column sums all zero.  Then, it examines the entries in each tile.
\begin{itemize}
 \item We assume the variables have a nonnegative value or else the constructions are invalid), and find their symbolic absolute value.
 \item For each entry $e$, we look for the entry $e+1$ in the tile, and if it is there, then $[e,e+1]$ is an interval covered by the tile.  Continue this process until all entries in the tile are used in an interval, and there are as few intervals as possible.
 \item Some tiles occur precisely once, and we're done with them at this point.  Other tiles have entries that are nonconstant polynomials in a variable $r$.  
 \item Suppose a sequence of tiles $B_0, \cdots, B_f$ are to be made.  As we did with the individual entries, we assemble intervals $[a(r),b(r)]$ into long intervals $[a(0),b(f)]$ if $b(r)+1 = a(r+1)$ or $[a(f),b(0)]$ if $b(r)+1 = a(r-1)$.

 \begin{itemize}
  \item In a few cases, a few variable intervals don't tie together nicely.  We regroup these intervals into four sequences $(a_i(r), a_i(r)+4, a_i(r)+8, \cdots, b_i(r))$ for $i=0,1,2,3$.  These sequences interlace to produce a whole interval $[a_1(0),b_2(f)]$ together with two sporadic entries, $\{a_0(0)\}$ and $\{b_3(f)\}$.  These are stored with the collection of long intervals.
 \end{itemize}
\end{itemize}  

Some constructions then use a large shiftable Heffter array whose dimensions are both even.  We know these exist by Theorem \ref{evenbyeventhm} and the entries form a known interval.  We add that shifted interval to the collection.  Finally, we have some number of intervals $I_0 = [a_0, b_0], I_1 = [a_1, b_1], \cdots$.  Then, we assemble these long intervals: if $b_i + 1 = a_j$, then we write $[a_i, b_j] = I_i \cap I_j = I_i I_j$, where adjacency denotes interval concatenation.

In several of the $(3 \times n)$ cases, our constructions do not produce tiles whose row and column sums are zero  (this is possible, but we are using these constructions to maintain consistency with the arrays used in \cite{DM}).  In these cases, we show that the row sums of the entire array are zero.  In all other constructions, the tiles themselves have row and column sums zero (possibly over $\ZZ_{2mn+1}$), hence the entire array has row and column sum zero.

Since the proof is typeset by computer, we are able to retroactively relabel the intervals such that $[1,mn] = I_0 I_1 I_2 \cdots I_f$.  Putting all of this together, we see that each entry in the interval $[1, mn]$ is used precisely once and hence $sup(H)=[1, mn]$. Since the row and column sums are zero, we have indeed produced a Heffter array of the claimed dimensions.

\subsection{ \label{3byother} Constructing  H(3,{\em n}) for all {\em n} $\geq$ 3} 

In Proposition \ref{3by8_theorem} we used tiles $A$ and $A_r$, $0 \le r \le k-1$ to construct an integer $3 \times n$ Heffter array for $n \equiv 0 \pmod 8$. In  Theorem \ref{3byn_theorem} below we will use similar techniques to find $H(3, n)$ for all $n\geq 3$.
The proof proceeds in 8 cases depending on the residue $n \pmod {8}$. There are also a few small cases to consider. The case $n \equiv 0 \pmod 8$ was given in Proposition \ref{3by8_theorem}, but will be included below with a (Sage) verification of included symbols. The proof of the remaining cases will be similar to that case -- we will concatenate the sporadic tile $A$ and the  tiles $A_{r}$ . We then give the Sage generated code that checks that the support of $H$ is $[1,mn]$ and that the row and column sums are all 0 over the integers or if necessary over $\mathbb Z_{6n+1}$.  An extensive discussion of how these tiles were found is in   \cite{Tom-thesis}.

\begin{theorem}\label{3byn_theorem} There exists a  Heffter array $H(3,n)$ for all $n \ge 3$. If $n \equiv 0,1 \pmod{4}$, then the array is integer. \end{theorem}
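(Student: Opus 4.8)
The plan is to prove Theorem \ref{3byn_theorem} by a case analysis on the residue of $n$ modulo $8$, in each case exhibiting an explicit tile-based construction of exactly the type already illustrated in Proposition \ref{3by8_theorem}. For a fixed residue $j$ with $0 \le j \le 7$, I would write $n = 8k + c_j$ for an appropriate constant $c_j$ (chosen so that $k \ge 0$), with a finite list of genuinely small values of $n$ set aside for separate treatment. For each $j$ I would display a sporadic $3 \times r$ tile $A$ (with $4 \le r < 12$, as in the discussion of Section \ref{construction}) together with a one-parameter family of $3 \times 8$ tiles $A_0, A_1, \dots, A_{k-1}$, and set $H = [\,A \mid A_0 \mid A_1 \mid \cdots \mid A_{k-1}\,]$.

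Then there are three things to check, just as in Proposition \ref{3by8_theorem}. First, column sums: each tile is constructed to be column-Heffter (every column sums to $0$), so $H$ is as well. Second, row sums: each $A_r$ has the same triple of row sums $[\sigma_1,\sigma_2,\sigma_3]$, and the row sums of $A$ are chosen to be $-k[\sigma_1,\sigma_2,\sigma_3]$, so the row sums of $H$ all vanish. When $n \equiv 0,1 \pmod 4$ the tiles have integer entries drawn from Skolem-sequence triples $\{a,b,c\}$ with $a+b=c$, so row sums vanish over $\mathbb{Z}$ and the result is an integer Heffter array; when $n \equiv 2,3 \pmod 4$ no Skolem sequence of order $n$ exists, so I would instead build the columns from a $\{2\}$-near Skolem sequence of order $n$ plus the sporadic column $(2,\,3n-1,\,3n)$, and the row-sum cancellation then holds only modulo $6n+1$. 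Third, and this is the real content, the support: I must verify that the multiset of absolute values of the entries of $H$, as $r$ ranges over $0,\dots,k-1$ and over all tiles and the sporadic tile, is precisely $\{1,2,\dots,3n\}$ with no repetitions. Following Section \ref{justify}, I would do this by grouping the entries within each tile into short intervals, tracking how these intervals grow (or, in the awkward sub-cases, interlace into four arithmetic progressions) as $r$ varies, and finally checking that the resulting collection of long intervals concatenates to $[1,3n]$.

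The main obstacle is producing the tiles in the first place so that the support comes out exactly right --- this is where the Skolem and near-Skolem machinery of Section \ref{construction} and the exact-cover/computer search are indispensable, and where the modular (over $\mathbb{Z}_{6n+1}$) versus integer distinction must be carefully managed. Once the tiles are in hand, the verification itself is routine interval bookkeeping, which (as the authors note) has been automated; I would simply present the Sage-generated interval decomposition for each of the eight residues. Finally, I would dispose of the finitely many small $n$ not covered by the families --- e.g.\ small $n \equiv 3 \pmod 8$ and the other congruences where the family requires $k$ above some threshold --- by exhibiting explicit $3 \times n$ arrays found by direct search, checking (by hand or machine) that each is a Heffter array, and integer when $n \equiv 0,1 \pmod 4$.
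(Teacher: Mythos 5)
Your proposal follows essentially the same route as the paper: an eight-way case split on $n \bmod 8$, a sporadic $3\times r$ tile concatenated with a one-parameter family of $3\times 8$ tiles $A_r$ whose column sums vanish and whose row sums telescope against those of the sporadic tile (over $\mathbb{Z}$ when $n\equiv 0,1 \pmod 4$, and only modulo $6n+1$ otherwise), followed by the interval-concatenation bookkeeping to verify the support is exactly $\{1,\dots,3n\}$, with $H(3,3)$ and $H(3,4)$ handled as explicit small examples. This matches the paper's proof, including the Skolem/near-Skolem provenance of the tiles and the automated verification of the interval decomposition.
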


\begin{proof} 
The general constructions which follow cover all cases of $3 \times n$ Heffter arrays except the two small cases of $H(3,3)$ and $H(3,4)$.  These are presented in Example \ref{first_example}.

In each of the following cases we will write $n=8k+s$ for $s=7,8, \ldots 14$.  Our desired array $H$ will be built by concatenating $k+1$ tiles. Each begins with a $3 \times s$ sporadic tile $A$ followed by  the $3 \times 8$ tiles $A_r$, for $r = 0,\ldots,k-1$.  In each case we construct the  $3 \times n$ array as follows:
$$ H =\begin{array}{|c|c|c|c|c|c|} \hline 
A & A_0& A_1& \cdots &   A_{k-2}& A_{k-1} \\ \hline \end{array}.$$ 
\medskip

\noindent
{\bf n $\equiv$ 0 (mod 8)}. We write $n=8k+8$.  In this case

\[ A^T = \left[\begin{smallmatrix}
-12k - 13&4k + 4&8k + 9\\
-10k - 11&-8k - 7&18k + 18\\
4k + 6&18k + 17&-22k - 23\\
4k + 3&18k + 19&-22k - 22\\
10k + 10&4k + 5&-14k - 15\\
-4k - 8&-18k - 16&22k + 24\\
12k + 14&-2&-12k - 12\\
-1&-20k - 20&20k + 21\end{smallmatrix}\right] \mbox{ and }
A_r^T = \left[\begin{smallmatrix}
8k + 2r + 10&8k - 4r + 5&-16k + 2r - 15\\
-8k + 4r - 8&-16k - 2r - 16&24k - 2r + 24\\
14k - 2r + 14&-4k + 4r - 2&-10k - 2r - 12\\
-4k + 4r - 1&-18k - 2r - 20&22k - 2r + 21\\
-8k - 2r - 11&-8k + 4r - 3&16k - 2r + 14\\
8k - 4r + 6&16k + 2r + 17&-24k + 2r - 23\\
-14k + 2r - 13&4k - 4r&10k + 2r + 13\\
4k - 4r - 1&18k + 2r + 21&-22k + 2r - 20\end{smallmatrix}\right].
\]

The row sums of $A$ are $[4k,\, -2k,\, -2k]$ and the column sums of $A$ are 
$[0,\, 0,\, 0,\, 0,\, 0,\, 0,\, 0,\, 0]$. For each $0\leq r\leq k-1$ we see that the row sums of  
$A_r$ are $[-4,\, 2,\, 2]$ while the column sums are $[0,\, 0,\, 0,\, 0,\, 0,\, 0,\, 0,\, 0]$.
So each of these tiles are column Heffter but not row Heffter. Now concatenate these tiles into a $3 \times n$ array $H$ as described above.

We claim that $H$ is an integer Heffter array $H(3,n)$. First, since each tile is column Heffter, so is $H$. Second, the sum of the rows of the $A_r$'s is $k \times [-4,\, 2,\, 2] =[-4k,\, 2k,\, 2k] $ which when added to the row sums in $A$ gives that every row sum is indeed 0, as desired.

Now we consider the support of $H$.
The entries in $A$ cover the
intervals $I_{0 } = [1,2]$, $I_{2 } = [4k + 3,4k + 6]$, $I_{8 } = \{8k + 9\}$, $I_{4 } = \{4k + 8\}$, $I_{6 } = \{8k + 7\}$, $I_{10 } = [10k + 10,10k + 11]$, $I_{14 } = \{14k + 15\}$, $I_{21 } = [22k + 22,22k + 24]$, $I_{12 } = [12k + 12,12k + 14]$, $I_{19 } = [20k + 20,20k + 21]$ and $I_{17 } = [18k + 16,18k + 19]$.

The entries in $A_r$ cover the
intervals $\{8k - 4r + 8\}$, $[8k - 4r + 5,8k - 4r + 6]$, $[4k - 4r - 1,4k - 4r + 2]$, $\{8k - 4r + 3\}$, $[8k + 2r + 10,8k + 2r + 11]$, $[10k + 2r + 12,10k + 2r + 13]$, $[18k + 2r + 20,18k + 2r + 21]$, $[16k - 2r + 14,16k - 2r + 15]$, $[16k + 2r + 16,16k + 2r + 17]$, $[24k - 2r + 23,24k - 2r + 24]$, $[14k - 2r + 13,14k - 2r + 14]$ and $[22k - 2r + 20,22k - 2r + 21]$.
Considering $0 \leq r \leq k - 1$, these tiles cover the 
intervals $I_{11 } = [3,4k + 2]$, $I_{15 } = [8k + 10,10k + 9]$, $I_{16 } = [10k + 12,12k + 11]$, $I_{22 } = [18k + 20,20k + 19]$, $I_{13 } = [14k + 16,16k + 15]$, $I_{20 } = [16k + 16,18k + 15]$, $I_{3 } = [22k + 25,24k + 24]$, $I_{5 } = [12k + 15,14k + 14]$ and $I_{7 } = [20k + 22,22k + 21]$.

Additionally, we split the intervals $\{8k - 4r + 8\}$, $[8k - 4r + 5,8k - 4r + 6]$ and $\{8k - 4r + 3\}$ into the sequences 
$ (4k + 7, 4k + 11, \cdots, 8k + 3), 
 (4k + 9, 4k + 13, \cdots, 8k + 5), 
 (4k + 10, 4k + 14, \cdots, 8k + 6), $
and
$(4k + 12, 4k + 16, \cdots, 8k + 8), $
 and rejoin them into the intervals $I_{3 } = \{4k + 7\}$, $I_{5 } = [4k + 9,8k + 6]$ and $I_{7 } = \{8k + 8\}$.

Concatenating these intervals, we have that $sup(H)= [1,24k + 24] = I_0 I_1 \cdots I_{22}$
and hence have constructed an integer $(3 \times 8k+8)$ Heffter array for all $k\geq 0$.

\bigskip
\noindent
{\bf n $\equiv$ 1 (mod 8)}. 
We write $n=8k+9$. In this case

\[ A^T = \left[\begin{smallmatrix}
8k + 7&8k + 10&-16k - 17\\
10k + 12&8k + 9&-18k - 21\\
16k + 18&-12k - 14&-4k - 4\\
4k + 6&-22k - 26&18k + 20\\
4k + 3&18k + 22&-22k - 25\\
-4k - 5&14k + 16&-10k - 11\\
-12k - 13&-2&12k + 15\\
-22k - 27&4k + 8&18k + 19\\
-1&-20k - 23&20k + 24\end{smallmatrix}\right] \mbox{ and }
 A_r^T = \left[\begin{smallmatrix}
-8k + 4r - 5&16k - 2r + 16&-8k - 2r - 11\\
-10k - 2r - 13&-4k + 4r - 2&14k - 2r + 15\\
-24k + 2r - 27&8k - 4r + 8&16k + 2r + 19\\
-4k + 4r - 1&-18k - 2r - 23&22k - 2r + 24\\
8k - 4r + 3&-16k + 2r - 15&8k + 2r + 12\\
10k + 2r + 14&4k - 4r&-14k + 2r - 14\\
24k - 2r + 26&-8k + 4r - 6&-16k - 2r - 20\\
4k - 4r - 1&18k + 2r + 24&-22k + 2r - 23\end{smallmatrix}\right]. \]

The row sums  of $A$ are $[4k,\, -2k,\, -2k]$ and the column sums are $[0,\, 0,\, 0,\, 0,\, 0,\, 0,\, 0,\, 0,\, 0]$. 
For each $0\leq r\leq k-1$ we see that the row sums of  
$A_r$ are $[-4,\, 2,\, 2]$ while the column sums are $[0,\, 0,\, 0,\, 0,\, 0,\, 0,\, 0,\, 0]$.
So each of these tiles are column Heffter but not row Heffter. Again form $H$ as described above.
 It is easy now to see that all the row and column sums are zero.

 The entries in $A$ cover the
intervals $I_{0 } = [1,2]$, $I_{2 } = [4k + 3,4k + 6]$, $I_{6 } = \{8k + 7\}$, $I_{4 } = \{4k + 8\}$, $I_{8 } = [8k + 9,8k + 10]$, $I_{16 } = [16k + 17,16k + 18]$, $I_{14 } = \{14k + 16\}$, $I_{22 } = [22k + 25,22k + 27]$, $I_{10 } = [10k + 11,10k + 12]$, $I_{20 } = [20k + 23,20k + 24]$, $I_{18 } = [18k + 19,18k + 22]$ and $I_{12 } = [12k + 13,12k + 15]$.

 The entries in $A_r$ cover the
intervals $\{8k - 4r + 8\}$, $[8k - 4r + 5,8k - 4r + 6]$, $[4k - 4r - 1,4k - 4r + 2]$, $\{8k - 4r + 3\}$, $[8k + 2r + 11,8k + 2r + 12]$, $[18k + 2r + 23,18k + 2r + 24]$, $[14k - 2r + 14,14k - 2r + 15]$, $[10k + 2r + 13,10k + 2r + 14]$, $[16k + 2r + 19,16k + 2r + 20]$, $[24k - 2r + 26,24k - 2r + 27]$, $[16k - 2r + 15,16k - 2r + 16]$ and $[22k - 2r + 23,22k - 2r + 24]$.
Considering $0 \leq r \leq k - 1$, these tiles cover the 
intervals $I_{19 } = [3,4k + 2]$, $I_{11 } = [8k + 11,10k + 10]$, $I_{17 } = [18k + 23,20k + 22]$, $I_{23 } = [12k + 16,14k + 15]$, $I_{15 } = [10k + 13,12k + 12]$, $I_{21 } = [16k + 19,18k + 18]$, $I_{3 } = [22k + 28,24k + 27]$, $I_{5 } = [14k + 17,16k + 16]$ and $I_{7 } = [20k + 25,22k + 24]$.
Additionally, we split the intervals $\{8k - 4r + 8\}$, $[8k - 4r + 5,8k - 4r + 6]$ and $\{8k - 4r + 3\}$ into the sequences 
$(4k + 7, 4k + 11, \cdots, 8k + 3), 
 (4k + 9, 4k + 13, \cdots, 8k + 5), 
 (4k + 10, 4k + 14, \cdots, 8k + 6)$, 
and
$(4k + 12, 4k + 16, \cdots, 8k + 8)$, 
 and rejoin them into the intervals $I_{3 } = \{4k + 7\}$, $I_{5 } = [4k + 9,8k + 6]$ and $I_{7 } = \{8k + 8\}$.

Concatenating these intervals, we have covered $[1,24k + 27] = I_0 I_1 \cdots I_{23}$ as desired and thus have constructed an integer $(3 \times 8k+9)$ Heffter array for all $k \geq 0$.

\bigskip \noindent
{\bf n $\equiv$ 2 (mod 8)}. 
We write $n=8k+10$. In this case
\[ A^T = \left[\begin{smallmatrix}
24k + 30&24k + 29&2\\
16k + 21&-8k - 11&-8k - 10\\
10k + 13&-10k - 14&1\\
8k + 8&12k + 16&-20k - 24\\
4k + 5&16k + 20&-20k - 25\\
8k + 9&12k + 17&-20k - 26\\
-4k - 7&14k + 19&-10k - 12\\
12k + 15&3&-12k - 18\\
-20k - 27&4&20k + 23\\
-4k - 6&-18k - 22&22k + 28\end{smallmatrix}\right] \mbox{ and }
 A_r^T = \left[\begin{smallmatrix}
-8k + 4r - 7&16k - 2r + 19&-8k - 2r - 12\\
10k + 2r + 15&4k - 4r + 3&-14k + 2r - 18\\
-22k + 2r - 27&4k - 4r + 4&18k + 2r + 23\\
-8k + 4r - 6&-16k - 2r - 22&24k - 2r + 28\\
8k - 4r + 5&-16k + 2r - 18&8k + 2r + 13\\
-10k - 2r - 16&-4k + 4r - 1&14k - 2r + 17\\
22k - 2r + 26&-4k + 4r - 2&-18k - 2r - 24\\
8k - 4r + 4&16k + 2r + 23&-24k + 2r - 27\end{smallmatrix}\right].
\]

The row sums of $A$ are $[54k + 61,\, 42k + 61,\, -48k - 61]$ and the column sums are $[48k + 61,\, 0,\, 0,\, 0,\, 0,\, 0,\, 0,\, 0,\, 0,\, 0]$.  For each $0\leq r\leq k-1$ we see that the row sums of  
$A_r$ are $[-6,\, 6,\, 0]$ while the column sums are $[0,\, 0,\, 0,\, 0,\, 0,\, 0,\, 0,\, 0]$.
If we form $H$ as described above we see that all the row and column sums are zero modulo $48k+61 = 6n+1$.

 The entries in $A$ cover the
intervals $I_{0 } = [1,4]$, $I_{2 } = [4k + 5,4k + 7]$, $I_{4 } = [8k + 8,8k + 11]$, $I_{16 } = [20k + 23,20k + 27]$, $I_{6 } = [10k + 12,10k + 14]$, $I_{18 } = \{22k + 28\}$, $I_{14 } = \{18k + 22\}$, $I_{10 } = \{14k + 19\}$, $I_{8 } = [12k + 15,12k + 18]$, $I_{20 } = [24k + 29,24k + 30]$ and $I_{12 } = [16k + 20,16k + 21]$.

 The entries in $A_r$ cover the
intervals $[4k - 4r + 1,4k - 4r + 4]$, $[8k - 4r + 4,8k - 4r + 7]$, $[8k + 2r + 12,8k + 2r + 13]$, $[18k + 2r + 23,18k + 2r + 24]$, $[22k - 2r + 26,22k - 2r + 27]$, $[14k - 2r + 17,14k - 2r + 18]$, $[16k - 2r + 18,16k - 2r + 19]$, $[10k + 2r + 15,10k + 2r + 16]$, $[16k + 2r + 22,16k + 2r + 23]$ and $[24k - 2r + 27,24k - 2r + 28]$.
Considering $0 \leq r \leq k - 1$, these tiles cover the 
intervals $I_{1 } = [5,4k + 4]$, $I_{3 } = [4k + 8,8k + 7]$, $I_{5 } = [8k + 12,10k + 11]$, $I_{15 } = [18k + 23,20k + 22]$, $I_{17 } = [20k + 28,22k + 27]$, $I_{9 } = [12k + 19,14k + 18]$, $I_{11 } = [14k + 20,16k + 19]$, $I_{7 } = [10k + 15,12k + 14]$, $I_{13 } = [16k + 22,18k + 21]$ and $I_{19 } = [22k + 29,24k + 28]$.

Concatenating these intervals, we have covered $[1,24k + 30] = I_0 I_1 \cdots I_{20}$ as desired and thus have constructed a $(3 \times 8k+10)$ Heffter array for all $k \geq 0$.

\bigskip\noindent
{\bf n $\equiv$ 3 (mod 8)}. 
We write $n=8k+11$. In this case
\[ A^T = \left[\begin{smallmatrix}
24k + 33&24k + 32&2\\
8k + 11&-16k - 23&8k + 12\\
8k + 13&-12k - 18&4k + 5\\
4k + 6&10k + 15&-14k - 21\\
1&20k + 27&-20k - 28\\
-12k - 17&-8k - 9&20k + 26\\
8k + 10&14k + 20&-22k - 30\\
-14k - 22&4k + 8&10k + 14\\
22k + 31&-4k - 7&-18k - 24\\
4&-20k - 29&20k + 25\\
-3&-12k - 16&12k + 19\end{smallmatrix}\right]
\mbox{ and }
A_r^T = \left[\begin{smallmatrix}
-16k + 2r - 22&8k - 4r + 8&8k + 2r + 14\\
24k - 2r + 31&-8k + 4r - 7&-16k - 2r - 24\\
4k - 4r + 4&-22k + 2r - 29&18k + 2r + 25\\
-4k + 4r - 3&-10k - 2r - 16&14k - 2r + 19\\
16k - 2r + 21&-8k + 4r - 6&-8k - 2r - 15\\
-24k + 2r - 30&8k - 4r + 5&16k + 2r + 25\\
-4k + 4r - 2&22k - 2r + 28&-18k - 2r - 26\\
4k - 4r + 1&10k + 2r + 17&-14k + 2r - 18\end{smallmatrix}\right].
\]

The row sums of $A$ are $[48k + 67,\, 0,\, 0]$ and the column sums are $[48k + 67,\, 0,\, 0,\, 0,\, 0,\, 0,\, 0,\, 0,\,$ $ 0,\, 0,\, 0]$.  For each $0\leq r\leq k-1$ we see that all of the row  and column sums of
$A_r$ are zero.
If we form $H$ as described above we see that all the row and column sums are zero modulo $48k+67 = 6n+1$.

The entries in $A$ cover the
intervals $I_{0 } = [1,4]$, $I_{2 } = [4k + 5,4k + 8]$, $I_{4 } = [8k + 9,8k + 13]$, $I_{20 } = [24k + 32,24k + 33]$, $I_{10 } = [14k + 20,14k + 22]$, $I_{16 } = [20k + 25,20k + 29]$, $I_{18 } = [22k + 30,22k + 31]$, $I_{8 } = [12k + 16,12k + 19]$, $I_{12 } = \{16k + 23\}$, $I_{6 } = [10k + 14,10k + 15]$ and $I_{14 } = \{18k + 24\}$.

 The entries in $A_r$ cover the
intervals $[4k - 4r + 1,4k - 4r + 4]$, $[8k - 4r + 5,8k - 4r + 8]$, $[8k + 2r + 14,8k + 2r + 15]$, $[14k - 2r + 18,14k - 2r + 19]$, $[22k - 2r + 28,22k - 2r + 29]$, $[18k + 2r + 25,18k + 2r + 26]$, $[16k + 2r + 24,16k + 2r + 25]$, $[10k + 2r + 16,10k + 2r + 17]$, $[16k - 2r + 21,16k - 2r + 22]$ and $[24k - 2r + 30,24k - 2r + 31]$.
Considering $0 \leq r \leq k - 1$, these tiles cover the 
intervals $I_{1 } = [5,4k + 4]$, $I_{3 } = [4k + 9,8k + 8]$, $I_{5 } = [8k + 14,10k + 13]$, $I_{9 } = [12k + 20,14k + 19]$, $I_{17 } = [20k + 30,22k + 29]$, $I_{15 } = [18k + 25,20k + 24]$, $I_{13 } = [16k + 24,18k + 23]$, $I_{7 } = [10k + 16,12k + 15]$, $I_{11 } = [14k + 23,16k + 22]$ and $I_{19 } = [22k + 32,24k + 31]$.

Concatenating these intervals, we have covered $[1,24k + 33] = I_0 I_1 \cdots I_{20}$ and thus have constructed a $(3 \times 8k+11)$ Heffter array for all $k \geq 0$.

\bigskip\noindent
{\bf n $\equiv$ 4 (mod 8)}. We write $n=8k+12$. In this case
\[ A^T = \left[\begin{smallmatrix}
8k + 13&4k + 6&-12k - 19\\
10k + 16&8k + 11&-18k - 27\\
22k + 34&-4k - 8&-18k - 26\\
-4k - 5&22k + 33&-18k - 28\\
4k + 7&-14k - 22&10k + 15\\
-22k - 35&4k + 10&18k + 25\\
-12k - 18&-2&12k + 20\\
-1&-20k - 30&20k + 31\\
-14k - 23&10k + 14&4k + 9\\
-4k - 12&-18k - 24&22k + 36\\
12k + 21&-12k - 17&-4\\
3&20k + 29&-20k - 32\end{smallmatrix}\right]
\mbox{ and }
A_r^T= \left[\begin{smallmatrix}
-16k + 2r - 23&8k + 2r + 14&8k - 4r + 9\\
-8k + 4r - 12&-16k - 2r - 24&24k - 2r + 36\\
14k - 2r + 21&-10k - 2r - 17&-4k + 4r - 4\\
4k - 4r + 3&18k + 2r + 29&-22k + 2r - 32\\
16k - 2r + 22&-8k - 2r - 15&-8k + 4r - 7\\
8k - 4r + 10&16k + 2r + 25&-24k + 2r - 35\\
-14k + 2r - 20&10k + 2r + 18&4k - 4r + 2\\
-4k + 4r - 1&-18k - 2r - 30&22k - 2r + 31\end{smallmatrix}\right].
 \]

The row and column sums of $A$ and each of the $A_r$ are all zero, hence it is trivial to see that all row and column sums of $H$ are also zero.

 The entries in $A$ cover the
intervals $I_{0 } = [1,4]$, $I_{2 } = [4k + 5,4k + 10]$, $I_{8 } = \{8k + 13\}$, $I_{4 } = \{4k + 12\}$, $I_{6 } = \{8k + 11\}$, $I_{21 } = [22k + 33,22k + 36]$, $I_{14 } = [14k + 22,14k + 23]$, $I_{12 } = [12k + 17,12k + 21]$, $I_{17 } = [18k + 24,18k + 28]$, $I_{19 } = [20k + 29,20k + 32]$ and $I_{10 } = [10k + 14,10k + 16]$.

The entries in $A_r$ cover the
intervals $[4k - 4r + 1,4k - 4r + 4]$, $\{8k - 4r + 7\}$, $[8k - 4r + 9,8k - 4r + 10]$, $[8k + 2r + 14,8k + 2r + 15]$, $\{8k - 4r + 12\}$, $[18k + 2r + 29,18k + 2r + 30]$, $[14k - 2r + 20,14k - 2r + 21]$, $[24k - 2r + 35,24k - 2r + 36]$, $[22k - 2r + 31,22k - 2r + 32]$, $[10k + 2r + 17,10k + 2r + 18]$, $[16k - 2r + 22,16k - 2r + 23]$ and $[16k + 2r + 24,16k + 2r + 25]$.
Considering $0 \leq r \leq k - 1$, these tiles cover the 
intervals $I_{1 } = [5,4k + 4]$, $I_{13 } = [8k +14,10k +13]$, $I_{20 } = [18k + 29,20k + 28]$, $I_{11 } = [12k + 22,14k + 21]$, $I_{15 } = [22k + 37,24k + 36]$, $I_{16 } = [20k + 33,22k + 32]$, $I_{3 } = [10k + 17,12k + 16]$, $I_{5 } = [14k + 24,16k + 23]$ and $I_{7 } = [16k + 24,18k + 23]$.
Additionally, we split the intervals $\{8k - 4r + 7\}$, $[8k - 4r + 9,8k - 4r + 10]$ and $\{8k - 4r + 12\}$ into the sequences 
$ (4k + 11, 4k + 15, \cdots, 8k + 7), 
 (4k + 13, 4k + 17, \cdots, 8k + 9), 
 (4k + 14, 4k + 18, \cdots, 8k + 10)$ 
and $ (4k + 16, 4k + 20, \cdots, 8k + 12),$
 and rejoin them into the intervals $I_{3 } = \{4k + 11\}$, $I_{5 } = [4k + 13,8k + 10]$ and $I_{7 } = \{8k + 12\}$.

Concatenating these intervals, we have covered $[1,24k + 36] = I_0 I_1 \cdots I_{22}$ and thus have constructed an integer $(3 \times 8k+12)$ Heffter array for all $k \geq 0$.

\bigskip\noindent
{\bf n $\equiv$ 5 (mod 8)}. We write $n=8k+5$. Let
\[ A^T = \left[\begin{smallmatrix}
8k + 6&-16k - 9&8k + 3\\
10k + 7&8k + 5&-18k - 12\\
-16k - 10&4k + 2&12k + 8\\
-4k - 4&-18k - 11&22k + 15\\
4k + 1&18k + 13&-22k - 14\end{smallmatrix}\right] \mbox{ and }
A_r^T = \left[\begin{smallmatrix}
-8k + 4r - 1&16k - 2r + 8&-8k - 2r - 7\\
-14k + 2r - 8&4k - 4r&10k + 2r + 8\\
16k + 2r + 11&8k - 4r + 4&-24k + 2r - 15\\
4k - 4r - 1&18k + 2r + 14&-22k + 2r - 13\\
8k - 4r - 1&-16k + 2r - 7&8k + 2r + 8\\
14k - 2r + 7&-4k + 4r + 2&-10k - 2r - 9\\
-16k - 2r - 12&-8k + 4r - 2&24k - 2r + 14\\
-4k + 4r + 3&-18k - 2r - 15&22k - 2r + 12\end{smallmatrix}\right].
\]

Then $A$ has all column sums equaling zero and has row sums $[2k,\, -4k,\, 2k]$. For each $0\leq r\leq k-1$ we see that the row sums of  
$A_r$ are $[-2,\, 4,\, -2]$ while the column sums are $[0,\, 0,\, 0,\, 0,\, 0,\, 0,\, 0,\, 0]$.
If we form $H$ as described above it is easy now to see that all the row and column sums are zero.

The entries in $A$ cover the
intervals $I_{3 } = \{4k + 4\}$, $I_{7 } = [8k + 5,8k + 6]$, $I_{5 } = \{8k + 3\}$, $I_{1 } = [4k + 1,4k + 2]$, $I_{9 } = \{10k + 7\}$, $I_{11 } = \{12k + 8\}$, $I_{14 } = [16k + 9,16k + 10]$, $I_{19 } = [22k + 14,22k + 15]$ and $I_{16 } = [18k + 11,18k + 13]$.

 The entries in $A_r$ cover the
intervals $[4k - 4r - 3,4k - 4r]$, $\{8k - 4r - 1\}$, $\{8k - 4r + 4\}$, $[8k + 2r + 7,8k + 2r + 8]$, $[8k - 4r + 1,8k - 4r + 2]$, $[10k + 2r + 8,10k + 2r + 9]$, $[16k - 2r + 7,16k - 2r + 8]$, $[14k - 2r + 7,14k - 2r + 8]$, $[22k - 2r + 12,22k - 2r + 13]$, $[24k - 2r + 14,24k - 2r + 15]$, $[16k + 2r + 11,16k + 2r + 12]$ and $[18k + 2r + 14,18k + 2r + 15]$.
Considering $0 \leq r \leq k - 1$, these tiles cover the 
intervals $I_{0 } = [1,4k]$, $I_{13 } = [8k + 7,10k + 6]$, $I_{18 } = [10k + 8,12k + 7]$, $I_{20 } = [14k + 9,16k + 8]$, $I_{15 } = [12k + 9,14k + 8]$, $I_{17 } = [20k + 14,22k + 13]$, $I_{2 } = [22k + 16,24k + 15]$, $I_{4 } = [16k + 11,18k + 10]$ and $I_{6 } = [18k + 14,20k + 13]$.
Additionally, we split the intervals $\{8k - 4r - 1\}$, $\{8k - 4r + 4\}$ and $[8k - 4r + 1,8k - 4r + 2]$ into the sequences 
$ (4k + 3, 4k + 7, \cdots, 8k - 1), 
 (4k + 5, 4k + 9, \cdots, 8k + 1), 
(4k + 6, 4k + 10, \cdots, 8k + 2),$
and
$ (4k + 8, 4k + 12, \cdots, 8k + 4),$
 and rejoin them into the intervals $I_{2 } = \{4k + 3\}$, $I_{4 } = [4k + 5,8k + 2]$ and $I_{6 } = \{8k + 4\}$.

Concatenating these intervals, we have covered $[1,24k + 15] = I_0 I_1 \cdots I_{20}$
and thus have constructed an integer $(3 \times 8k+5)$ Heffter array for all $k \geq 0$.

\bigskip\noindent
{\bf n $\equiv$ 6 (mod 8)}. We write $n=8k+6$. Let
\[ A = \left[\begin{smallmatrix}
24k + 18&2&24k + 17\\
-16k - 13&8k + 6&8k + 7\\
-1&-10k - 8&10k + 9\\
8k + 4&-20k - 14&12k + 10\\
-4k - 3&-16k - 12&20k + 15\\
-8k - 5&-12k - 11&20k + 16\end{smallmatrix}\right] \mbox { and }
A_r = \left[\begin{smallmatrix}
-8k + 4r - 3&16k - 2r + 11&-8k - 2r - 8\\
-4k + 4r - 1&-10k - 2r - 10&14k - 2r + 11\\
-4k + 4r - 2&22k - 2r + 16&-18k - 2r - 14\\
8k - 4r + 2&16k + 2r + 14&-24k + 2r - 16\\
8k - 4r + 1&-16k + 2r - 10&8k + 2r + 9\\
4k - 4r - 1&10k + 2r + 11&-14k + 2r - 10\\
4k - 4r&-22k + 2r - 15&18k + 2r + 15\\
-8k + 4r&-16k - 2r - 15&24k - 2r + 15\end{smallmatrix}\right].
 \]

Then $A$ has column sums $[48k + 37,\, 0,\, 0,\, 0,\, 0,\, 0]$ and row sums $[4k,\, -50k - 37,\, 94k + 74].$ For each $0\leq r\leq k-1$ we see that the row sums of  
$A_r$ are $[-4,\, 2,\, 2]$ while the column sums are $[0,\, 0,\, 0,\, 0,\, 0,\, 0,\, 0,\, 0]$.
If we form $H$ as described above it is easy now to see that all  column sums are zero and the row sums are $[0, -48k-37, 96k+74]$, hence all row sums are $0$ modulo $6n+1$.

The entries in $A$ cover the
intervals $I_{0 } = [1,2]$, $I_{4 } = [8k + 4,8k + 7]$, $I_{2 } = \{4k + 3\}$, $I_{6 } = [10k + 8,10k + 9]$, $I_{8 } = [12k + 10,12k + 11]$, $I_{11 } = [16k + 12,16k + 13]$, $I_{17 } = [24k + 17,24k + 18]$ and $I_{14 } = [20k + 14,20k + 16]$.

 The entries in $A_r$ cover the
intervals $[8k - 4r,8k - 4r + 3]$, $[4k - 4r - 1,4k - 4r + 2]$, $[8k + 2r + 8,8k + 2r + 9]$, $[16k + 2r + 14,16k + 2r + 15]$, $[22k - 2r + 15,22k - 2r + 16]$, $[10k + 2r + 10,10k + 2r + 11]$, $[24k - 2r + 15,24k - 2r + 16]$, $[18k + 2r + 14,18k + 2r + 15]$, $[16k - 2r + 10,16k - 2r + 11]$ and $[14k - 2r + 10,14k - 2r + 11]$.
Considering $0 \leq r \leq k - 1$, these tiles cover the 
intervals $I_{3 } = [4k + 4,8k + 3]$, $I_{1 } = [3,4k + 2]$, $I_{5 } = [8k + 8,10k + 7]$, $I_{12 } = [16k + 14,18k + 13]$, $I_{15 } = [20k + 17,22k + 16]$, $I_{7 } = [10k + 10,12k + 9]$, $I_{16 } = [22k + 17,24k + 16]$, $I_{13 } = [18k + 14,20k + 13]$, $I_{10 } = [14k + 12,16k + 11]$ and $I_{9 } = [12k + 12,14k + 11]$.

Concatenating these intervals, we have covered $[1,24k + 18] = I_0 I_1 \cdots I_{17}$ and hence have constructed a 
 $(3 \times 8k+6)$ Heffter array  for all $k \geq 0$.

\bigskip\noindent
{\bf n $\equiv$ 7 (mod 8)}. We write $n=8k+7$. Let

\[ A = \left[\begin{smallmatrix}
24k + 21&2&24k + 20\\
16k + 15&-8k - 8&-8k - 7\\
4k + 3&-12k - 12&8k + 9\\
-4k - 4&14k + 14&-10k - 10\\
-20k - 18&1&20k + 17\\
-12k - 11&20k + 16&-8k - 5\\
-8k - 6&-14k - 13&22k + 19\end{smallmatrix}\right]
\mbox{ and }
A_r = \left[\begin{smallmatrix}
-16k + 2r - 14&8k + 2r + 10&8k - 4r + 4\\
-8k + 4r - 3&-16k - 2r - 16&24k - 2r + 19\\
-18k - 2r - 16&22k - 2r + 18&-4k + 4r - 2\\
4k - 4r + 1&10k + 2r + 11&-14k + 2r - 12\\
16k - 2r + 13&-8k - 2r - 11&-8k + 4r - 2\\
8k - 4r + 1&16k + 2r + 17&-24k + 2r - 18\\
18k + 2r + 17&-22k + 2r - 17&4k - 4r\\
-4k + 4r + 1&-10k - 2r - 12&14k - 2r + 11\end{smallmatrix}\right].
\]

Now the column sums of $A$ are $[48k + 43,\, 0,\, 0,\, 0,\, 0,\, 0,\, 0]$ while the row sums are $[0,\, 0,\, 48k + 43]$. For each $0\leq r\leq k-1$ we see that all the row  and column sums of  
$A_r$ all zero.
If we form $H$ as described above it is easy now to see that all  row and column sums of $H$ are  $0$ modulo $6n+1$.

 The entries in $A$ cover the
intervals $I_{0 } = [1,2]$, $I_{2 } = [4k + 3,4k + 4]$, $I_{4 } = [8k + 5,8k + 9]$, $I_{15 } = [20k + 16,20k + 18]$, $I_{6 } = \{10k + 10\}$, $I_{12 } = \{16k + 15\}$, $I_{19 } = [24k + 20,24k + 21]$, $I_{17 } = \{22k + 19\}$, $I_{8 } = [12k + 11,12k + 12]$ and $I_{10 } = [14k + 13,14k + 14]$.

 The entries in $A_r$ cover the
intervals $[8k - 4r + 1,8k - 4r + 4]$, $[4k - 4r - 1,4k - 4r + 2]$, $[8k + 2r + 10,8k + 2r + 11]$, $[16k + 2r + 16,16k + 2r + 17]$, $[22k - 2r + 17,22k - 2r + 18]$, $[10k + 2r + 11,10k + 2r + 12]$, $[16k - 2r + 13,16k - 2r + 14]$, $[24k - 2r + 18,24k - 2r + 19]$, $[14k - 2r + 11,14k - 2r + 12]$ and $[18k + 2r + 16,18k + 2r + 17]$.
Considering $0 \leq r \leq k - 1$, these tiles cover the 
intervals $I_{3 } = [4k + 5,8k + 4]$, $I_{1 } = [3,4k + 2]$, $I_{5 } = [8k + 10,10k + 9]$, $I_{13 } = [16k + 16,18k + 15]$, $I_{16 } = [20k + 19,22k + 18]$, $I_{7 } = [10k + 11,12k + 10]$, $I_{11 } = [14k + 15,16k + 14]$, $I_{18 } = [22k + 20,24k + 19]$, $I_{9 } = [12k + 13,14k + 12]$ and $I_{14 } = [18k + 16,20k + 15]$.

Concatenating these intervals, we have covered $[1,24k + 21] = I_0 I_1 \cdots I_{19}$ and hence $H$ is  a $(3 \times 8k+7)$ Heffter array
for all $k \geq 0$.  This concludes the proof of the theorem.
\end{proof}

\section {\label{5byn} Constructing  H(5,$ n$) for all $n \geq$ 3}

In Section \ref{3byn} we gave $3 \times n$ Heffter arrays for all $n$ and integer Heffter arrays if $n \equiv 0,1 \pmod 4$.  We will use these to construct $m \times n$ arrays by induction on $m$ in Section \ref{extending}. In general the induction step is of size 4 so these will be induction steps in the case when $m \equiv 3 \pmod 4$. To cover the case $m \equiv 1 \pmod 4$ we will construct $5 \times n$ Heffter arrays for all values of $n$. We do this in the next theorem.

\begin{theorem}\label{5byn_theorem} There exists a  Heffter array $H(5,n)$ for all $n \ge 3$. If $n \equiv 0,3 \pmod{4}$, then the array is integer. \end{theorem}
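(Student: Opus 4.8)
There exists a Heffter array $H(5,n)$ for all $n \ge 3$; if $n \equiv 0,3 \pmod 4$, then the array is integer.

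The plan is to mirror the proof of Theorem \ref{3byn_theorem}, replacing its $3 \times 8$ tiles and $3 \times s$ sporadic tile by $5 \times 8$ tiles and a $5 \times s$ sporadic tile. As explained in Section \ref{construction}, the columns of these tiles are built from near-Skolem sequences: from a $\{1\}$-near Skolem sequence of order $n$ one forms triples $(j+1,\ell(j)+x,-\ell(j)-j-x)$ of sum $1$ and appends a pair $(y,-y-1)$ to each, producing five-element columns of sum $0$. Since $\{1\}$-near Skolem sequences exist exactly when $n \equiv 1,2 \pmod 4$, this handles those residues; for $n \equiv 0,3 \pmod 4$ one uses $\{1,2\}$-near Skolem sequences together with the sporadic column $(1,-2,3,5n,5n-1)$, whose sum $10n+1$ is congruent to $0$ modulo the working modulus $2\cdot 5\cdot n + 1 = 10n+1$.

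First I would dispose of the finitely many small values of $n$ that fall outside the general pattern by displaying explicit $5 \times n$ arrays, obtained via the EXACTCOVER-based search described at the end of Section \ref{construction}. For the generic range I would split into eight cases according to $n \bmod 8$: writing $n = 8k+s$ for a suitable representative $s$, I would exhibit in each case a sporadic $5 \times s$ tile $A$ and a one-parameter family of $5 \times 8$ tiles $A_r$, $0 \le r \le k-1$, with entries affine in $k$ and $r$, and set
$$H = \begin{array}{|c|c|c|c|c|} \hline A & A_0 & A_1 & \cdots & A_{k-1} \\ \hline \end{array}.$$
The verification then has three parts, exactly as for $H(3,n)$: (i) every column of $A$ and of each $A_r$ sums to $0$ (over $\ZZ$ when $n \equiv 0,3 \pmod 4$, and modulo $10n+1$ when $n \equiv 1,2 \pmod 4$), so every column of $H$ does; (ii) the row sums of $A$ plus $k$ times the common row-sum vector of the $A_r$ give the zero vector (over $\ZZ$, or modulo $10n+1$ when necessary), so every row of $H$ sums to $0$; and (iii) the supports of the tiles, tracked and spliced as $r$ ranges over $0,\dots,k-1$ by the automated interval bookkeeping of Section \ref{justify}, assemble into the single interval $[1,5n]$, so $sup(H) = \{1,\dots,5n\}$.

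The integrality assertion is then immediate: when $n \equiv 0,3 \pmod 4$ the tiles are chosen with all row and column sums zero over $\ZZ$, so $H$ is an integer $H(5,n)$; when $n \equiv 1,2 \pmod 4$, Lemma \ref{necessary} already forbids an integer $H(5,n)$ (since $5n \equiv 1,2 \pmod 4$), and the construction supplies a Heffter array over $\ZZ_{10n+1}$, which is best possible.

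The main obstacle is not conceptual but computational: for each of the eight residues one must produce tiles that are simultaneously column-balanced, constant in $r$ on row sums, and support-disjoint with union $[1,5n]$, and one must arrange the sporadic tile so that its row sums absorb the accumulated row sums of the $A_r$ (and, where needed, so that everything closes up only modulo $10n+1$). This is precisely the work carried out by the near-Skolem constructions of Section \ref{construction} and certified by the symbolic check of Section \ref{justify}, performed in $\ZZ[k]$ or $\ZZ[k]/(10n+1)$ as appropriate.
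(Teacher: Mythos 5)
Your strategy is exactly the one the paper follows: dispose of $n=3,4,5,6$ by explicit arrays, then for each residue of $n$ modulo $8$ concatenate a sporadic $5\times s$ tile with a one-parameter family of $5\times 8$ tiles $A_r$, and check column sums, row sums, and that the tile supports splice into $[1,5n]$ (over $\ZZ$ when $n\equiv 0,3\pmod 4$, over $\ZZ_{10n+1}$ otherwise). Your reduction of the integrality claim to Lemma \ref{necessary} via $5n\equiv n\pmod 4$ is also correct.

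The gap is that the entire mathematical content of the theorem lives in the step you defer: actually producing, for each of the eight residues, a sporadic tile and a family $A_r$ whose entries are affine in $k$ and $r$, whose row-sum vectors are constant in $r$ and cancel against those of the sporadic tile, and whose supports interlock to cover $[1,5n]$ exactly once. The near-Skolem machinery of Section \ref{construction} only supplies candidate columns of sum $0$ (or $10n+1$); assembling them into tiles with the required row-sum and support behaviour is the nontrivial part, carried out in the paper by explicit formulas (found partly by computer search) and certified by the symbolic bookkeeping of Section \ref{justify}. Without exhibiting those tiles, or otherwise proving they exist, the argument remains a plan rather than a proof. One smaller point: you attach the sporadic column $(1,-2,3,5n,5n-1)$ of sum $10n+1$ to the cases $n\equiv 0,3\pmod 4$, but those are precisely the cases in which the array must be integer, so every column must sum to $0$ over $\ZZ$; in the paper's actual tiles the column summing to $10n+1$ occurs only in the non-integer cases $n\equiv 1,2\pmod 4$ (for instance the sporadic column sum $80k+91=10n+1$ when $n=8k+9$). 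As written, that assignment contradicts your own (correct) integrality paragraph.
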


\begin{proof}

The general constructions which follow cover all cases of  $H(5,n)$ arrays except the four  small cases of $H(5,n)$ with $n = 3,4,5,6$.  An $H(5,3)$ is constructed in Theorem \ref{3byn_theorem}, below we give an integer $H(5,4)$,  an $H(5,5)$ and an $H(5,6)$ .
\renewcommand{\arraycolsep}{3pt}
{\small$$
\begin{array}{|c|c|c|c|} \hline
7&-16&-10&19 \\ \hline
-12&15&17&-20\\ \hline
-2&9&-18&11\\ \hline
6&5&3&-14\\ \hline
1&-13&8&4\\ \hline
\multicolumn{4}{c}{H(4,5)}
\end{array}  \hspace{.5in}
 \begin{array}{|c|c|c|c|c|} \hline
1&5&6&7&-19\\ \hline
2&8&12&15&14\\ \hline
3&9&-21&22&-13\\ \hline
4&11&-25&-24&-17\\ \hline
-10&18&-23&-20&-16\\ \hline
\multicolumn{5}{c}{H(5,5)}
\end{array} \hspace{.5in}
 \begin{array}{|c|c|c|c|c|c|} \hline
1&-8&-7&15&26&-27\\ \hline
-2&20&-11&24&-25&-6\\ \hline
29&-19&17&-4&-10&-13\\ \hline
30&-9&-21&-23&-5&28\\ \hline
3&16&22&-12&14&18\\ \hline
\multicolumn{6}{c}{H(5,6)}
\end{array}
$$
}\renewcommand{\arraycolsep}{6pt}

The  constructions here are  quite similar to that of the $3 \times n$ case shown in Theorem \ref{3byn_theorem}.  In each of the following cases we will write $n=8k+s$ for $s=7,8, \ldots 14$.  Our desired array $H$ will be built by concatenating $k+1$ tiles. Each begins with a $5 \times s$ sporadic tile $A$ followed by  the $5 \times 8$ tiles $A_r$, for $r = 0,\ldots,k-1$.  In each case we construct the  $5 \times n$ array as follows:
$$ H =\begin{array}{|c|c|c|c|c|c|} \hline 
A & A_0& A_1& \cdots &   A_{k-2}& A_{k-1} \\ \hline \end{array}.$$ 
\medskip

\bigskip\noindent
{\bf n $\equiv$ 0 (mod 8)}. We write $n=8k+8$. Let

\[ A^T=\left[\begin{smallmatrix}
8k + 10&8k + 11&1&-2&-16k - 20\\
-4k - 5&-8k - 12&24k + 28&12k + 16&-24k - 27\\
-12k - 15&24k + 30&-8k - 9&-24k - 29&20k + 23\\
-8k - 8&-24k - 31&22k + 25&-14k - 18&24k + 32\\
-8k - 13&-24k - 33&-8k - 7&16k + 19&24k + 34\\
-24k - 35&24k + 36&-10k - 14&-4k - 4&14k + 17\\
24k + 26&24k + 38&-16k - 21&-8k - 6&-24k - 37\\
24k + 40&-24k - 39&-4k - 3&22k + 24&-18k - 22\\
\end{smallmatrix}\right] \]

and for each $0 \leq r \leq k-1$ let
\[A_r^T
\left[\begin{smallmatrix}
8k - 4r + 5&8k + 2r + 14&-16k + 2r - 18&24k + 2r + 41&-24k - 2r - 42\\
-4k + 4r - 2&-10k - 2r - 15&14k - 2r + 16&-26k - 2r - 41&26k + 2r + 42\\
8k - 4r + 4&16k + 2r + 22&-24k + 2r - 25&28k + 2r + 41&-28k - 2r - 42\\
-4k + 4r - 1&-18k - 2r - 23&22k - 2r + 23&-30k - 2r - 41&30k + 2r + 42\\
-8k + 4r - 3&-8k - 2r - 15&16k - 2r + 17&-32k - 2r - 41&32k + 2r + 42\\
4k - 4r&10k + 2r + 16&-14k + 2r - 15&34k + 2r + 41&-34k - 2r - 42\\
-8k + 4r - 2&-16k - 2r - 23&24k - 2r + 24&-36k - 2r - 41&36k + 2r + 42\\
4k - 4r - 1&18k + 2r + 24&-22k + 2r - 22&38k + 2r + 41&-38k - 2r - 42\end{smallmatrix}\right].
\]

The row and column sums of $A$ and all of the $A_r$ are all zero, hence the row and column sums of $H$ are also all zero.

The entries in $A$ cover the
intervals $I_{0 } = [1,2]$, $I_{2 } = [4k + 3,4k + 5]$, $I_{4 } = [8k + 6,8k + 13]$, $I_{18 } = [22k + 24,22k + 25]$, $I_{20 } = [24k + 26,24k + 40]$, $I_{16 } = \{20k + 23\}$, $I_{12 } = [16k + 19,16k + 21]$, $I_{14 } = \{18k + 22\}$, $I_{8 } = [12k + 15,12k + 16]$, $I_{6 } = \{10k + 14\}$ and $I_{10 } = [14k + 17,14k + 18]$.

For $0 \leq r \leq n-1$, The entries in $A_r$ cover the
intervals $[8k - 4r + 2,8k - 4r + 5]$, $[4k - 4r - 1,4k - 4r + 2]$, $[8k + 2r + 14,8k + 2r + 15]$, $[24k - 2r + 24,24k - 2r + 25]$, $[38k + 2r + 41,38k + 2r + 42]$, $[24k + 2r + 41,24k + 2r + 42]$, $[26k + 2r + 41,26k + 2r + 42]$, $[34k + 2r + 41,34k + 2r + 42]$, $[28k + 2r + 41,28k + 2r + 42]$, $[22k - 2r + 22,22k - 2r + 23]$, $[10k + 2r + 15,10k + 2r + 16]$, $[14k - 2r + 15,14k - 2r + 16]$, $[36k + 2r + 41,36k + 2r + 42]$, $[18k + 2r + 23,18k + 2r + 24]$, $[30k + 2r + 41,30k + 2r + 42]$, $[16k - 2r + 17,16k - 2r + 18]$, $[16k + 2r + 22,16k + 2r + 23]$ and $[32k + 2r + 41,32k + 2r + 42]$.
Considering $0 \leq r \leq k - 1$, these tiles cover the 
intervals $I_{3 } = [4k + 6,8k + 5]$, $I_{1 } = [3,4k + 2]$, $I_{5 } = [8k + 14,10k + 13]$, $I_{19 } = [22k + 26,24k + 25]$, $I_{28 } = [38k + 41,40k + 40]$, $I_{21 } = [24k + 41,26k + 40]$, $I_{22 } = [26k + 41,28k + 40]$, $I_{26 } = [34k + 41,36k + 40]$, $I_{23 } = [28k + 41,30k + 40]$, $I_{17 } = [20k + 24,22k + 23]$, $I_{7 } = [10k + 15,12k + 14]$, $I_{9 } = [12k + 17,14k + 16]$, $I_{27 } = [36k + 41,38k + 40]$, $I_{15 } = [18k + 23,20k + 22]$, $I_{24 } = [30k + 41,32k + 40]$, $I_{11 } = [14k + 19,16k + 18]$, $I_{13 } = [16k + 22,18k + 21]$ and $I_{25 } = [32k + 41,34k + 40]$.

Concatenating these intervals, we have that $sup(H)= [1,40k + 40] = I_0 I_1 \cdots I_{28}$.
Therefore, H is a $(5 \times 8k+8)$ integer Heffter array.

\bigskip\noindent
{\bf n $\equiv$ 1 (mod 8)}. We write $n=8k+9$. Let

\[ A^T = \left[\begin{smallmatrix}
1&-2&3&40k + 45&40k + 44\\
-8k - 11&-12k - 14&-24k - 28&24k + 29&20k + 24\\
-8k - 10&-10k - 13&18k + 22&-24k - 30&24k + 31\\
24k + 33&-18k - 20&-24k - 32&-4k - 7&22k + 26\\
-24k - 34&24k + 27&-8k - 9&-16k - 19&24k + 35\\
-4k - 5&24k + 37&-18k - 21&-24k - 36&22k + 25\\
20k + 23&24k + 39&-12k - 16&-8k - 8&-24k - 38\\
-24k - 40&-8k - 12&-8k - 6&24k + 41&16k + 17\\
24k + 43&-24k - 42&-4k - 4&-12k - 15&16k + 18\end{smallmatrix}\right]. \]

and for each $0 \leq r \leq k-1$ let
\[ A_r^T = \left[\begin{smallmatrix}
8k - 4r + 7&16k + 2r + 20&-24k + 2r - 26&24k + 2r + 44&-24k - 2r - 45\\
-4k + 4r - 2&-18k - 2r - 23&22k - 2r + 24&-26k - 2r - 44&26k + 2r + 45\\
4k - 4r + 3&10k + 2r + 14&-14k + 2r - 16&28k + 2r + 44&-28k - 2r - 45\\
-8k + 4r - 4&-8k - 2r - 13&16k - 2r + 16&-30k - 2r - 44&30k + 2r + 45\\
-8k + 4r - 5&-16k - 2r - 21&24k - 2r + 25&-32k - 2r - 44&32k + 2r + 45\\
4k - 4r&18k + 2r + 24&-22k + 2r - 23&34k + 2r + 44&-34k - 2r - 45\\
-4k + 4r - 1&-10k - 2r - 15&14k - 2r + 15&-36k - 2r - 44&36k + 2r + 45\\
8k - 4r + 2&8k + 2r + 14&-16k + 2r - 15&38k + 2r + 44&-38k - 2r - 45\end{smallmatrix}\right].\]

Then A has column sums
 $[80k + 91,\, 0,\, 0,\, 0,\, 0,\, 0,\, 0,\, 0,\, 0]$ and row sums $[0,\, 0,\, -80k - 91,\, 0,\, 160k + 182]$ while
 each $A_r$ has all row and column sums zero.  Thus the row and column sums of $H$ are all congruent to $0 \pmod {10n+1}$ as required.

The entries in $A$ cover the
intervals $I_{0 } = [1,3]$, $I_{4 } = \{4k + 7\}$, $I_{2 } = [4k + 4,4k + 5]$, $I_{6 } = \{8k + 6\}$, $I_{8 } = [8k + 8,8k + 12]$, $I_{19 } = [20k + 23,20k + 24]$, $I_{15 } = [16k + 17,16k + 19]$, $I_{17 } = [18k + 20,18k + 22]$, $I_{12 } = [12k + 14,12k + 16]$, $I_{10 } = \{10k + 13\}$, $I_{23 } = [24k + 27,24k + 43]$, $I_{32 } = [40k + 44,40k + 45]$ and $I_{21 } = [22k + 25,22k + 26]$.
a  tile with all row and column sums zero. 

The entries in $A_r$ cover the
intervals $[4k - 4r,4k - 4r + 3]$, $\{8k - 4r + 2\}$, $\{8k - 4r + 7\}$, $[8k - 4r + 4,8k - 4r + 5]$, $[8k + 2r + 13,8k + 2r + 14]$, $[26k + 2r + 44,26k + 2r + 45]$, $[14k - 2r + 15,14k - 2r + 16]$, $[36k + 2r + 44,36k + 2r + 45]$, $[16k + 2r + 20,16k + 2r + 21]$, $[38k + 2r + 44,38k + 2r + 45]$, $[24k - 2r + 25,24k - 2r + 26]$, $[30k + 2r + 44,30k + 2r + 45]$, $[16k - 2r + 15,16k - 2r + 16]$, $[24k + 2r + 44,24k + 2r + 45]$, $[34k + 2r + 44,34k + 2r + 45]$, $[18k + 2r + 23,18k + 2r + 24]$, $[22k - 2r + 23,22k - 2r + 24]$, $[28k + 2r + 44,28k + 2r + 45]$, $[32k + 2r + 44,32k + 2r + 45]$ and $[10k + 2r + 14,10k + 2r + 15]$.
Considering $0 \leq r \leq k - 1$, these tiles cover the 
intervals $I_{1 } = [4,4k + 3]$, $I_{30 } = [8k + 13,10k + 12]$, $I_{16 } = [26k + 44,28k + 43]$, $I_{31 } = [12k + 17,14k + 16]$, $I_{22 } = [36k + 44,38k + 43]$, $I_{27 } = [16k + 20,18k + 19]$, $I_{14 } = [38k + 44,40k + 43]$, $I_{24 } = [22k + 27,24k + 26]$, $I_{29 } = [30k + 44,32k + 43]$, $I_{18 } = [14k + 17,16k + 16]$, $I_{20 } = [24k + 44,26k + 43]$, $I_{26 } = [34k + 44,36k + 43]$, $I_{28 } = [18k + 23,20k + 22]$, $I_{11 } = [20k + 25,22k + 24]$, $I_{3 } = [28k + 44,30k + 43]$, $I_{5 } = [32k + 44,34k + 43]$ and $I_{7 } = [10k + 14,12k + 13]$.
Additionally, we split the intervals $\{8k - 4r + 2\}$, $\{8k - 4r + 7\}$ and $[8k - 4r + 4,8k - 4r + 5]$ into the sequences 
$(4k + 6, 4k + 10, \cdots, 8k + 2), 
 (4k + 8, 4k + 12, \cdots, 8k + 4), 
 (4k + 9, 4k + 13, \cdots, 8k + 5), $
and
$(4k + 11, 4k + 15, \cdots, 8k + 7), $
 and rejoin them into the intervals $I_{3 } = \{4k + 6\}$, $I_{5 } = [4k + 8,8k + 5]$ and $I_{7 } = \{8k + 7\}$.

Concatenating these intervals, we have covered $[1,40k + 45] = I_0 I_1 \cdots I_{32}$. 
 This construction hence yields a Heffter array
$H(5, 8k+9)$ for all $k \geq 0$.

\bigskip\noindent
{\bf n $\equiv$ 2 (mod 8)}. We write $n=8k+10$. Let
\[ A^T = \left[\begin{smallmatrix}
1&-2&3&40k + 49&40k + 50\\
-8k - 12&-12k - 15&24k + 32&-24k - 31&20k + 26\\
-8k - 11&-14k - 19&22k + 29&-24k - 33&24k + 34\\
-12k - 16&20k + 25&-24k - 35&-8k - 10&24k + 36\\
24k + 38&20k + 27&-4k - 7&-24k - 37&-16k - 21\\
24k + 40&-14k - 18&-4k - 6&18k + 23&-24k - 39\\
16k + 20&24k + 42&-24k - 41&-8k - 13&-8k - 8\\
-8k - 9&-24k - 43&24k + 44&-16k - 22&24k + 30\\
-4k - 4&-24k - 45&-10k - 14&24k + 46&14k + 17\\
-24k - 47&24k + 48&-4k - 5&22k + 28&-18k - 24\end{smallmatrix}\right], \]
and 
for each $0 \leq r \leq k-1$ let
\[ A_r^T = \left[\begin{smallmatrix}
8k - 4r + 6&8k + 2r + 14&-16k + 2r - 19&24k + 2r + 49&-24k - 2r - 50\\
-8k + 4r - 7&-16k - 2r - 23&24k - 2r + 29&-26k - 2r - 49&26k + 2r + 50\\
4k - 4r + 2&10k + 2r + 15&-14k + 2r - 16&28k + 2r + 49&-28k - 2r - 50\\
-4k + 4r - 3&-18k - 2r - 25&22k - 2r + 27&-30k - 2r - 49&30k + 2r + 50\\
-8k + 4r - 4&-8k - 2r - 15&16k - 2r + 18&-32k - 2r - 49&32k + 2r + 50\\
8k - 4r + 5&16k + 2r + 24&-24k + 2r - 28&34k + 2r + 49&-34k - 2r - 50\\
-4k + 4r&-10k - 2r - 16&14k - 2r + 15&-36k - 2r - 49&36k + 2r + 50\\
4k - 4r + 1&18k + 2r + 26&-22k + 2r - 26&38k + 2r + 49&-38k - 2r - 50\end{smallmatrix}\right]. \]

Then A has column sums
$[80k + 101,\, 0,\, 0,\, 0,\, 0,\, 0,\, 0,\, 0,\, 0,\, 0]$ and row sums $[0,\, 0,\, 0,\, 0,\, 80k + 101]$ while 
 each $A_r$ has all row and column sums zero.  Thus the row and column sums of $H$ are all congruent to $0 \pmod {10n+1}$ as required.

The entries in $A$ cover the
intervals $I_{0 } = [1,3]$, $I_{2 } = [4k + 4,4k + 7]$, $I_{4 } = [8k + 8,8k + 13]$, $I_{10 } = [14k + 17,14k + 19]$, $I_{29 } = [40k + 49,40k + 50]$, $I_{14 } = [18k + 23,18k + 24]$, $I_{20 } = [24k + 30,24k + 48]$, $I_{16 } = [20k + 25,20k + 27]$, $I_{8 } = [12k + 15,12k + 16]$, $I_{12 } = [16k + 20,16k + 22]$, $I_{18 } = [22k + 28,22k + 29]$ and $I_{6 } = \{10k + 14\}$.

The entries in $A_r$ cover the
intervals $[4k - 4r,4k - 4r + 3]$, $[8k - 4r + 4,8k - 4r + 7]$, $[8k + 2r + 14,8k + 2r + 15]$, $[24k - 2r + 28,24k - 2r + 29]$, $[28k + 2r + 49,28k + 2r + 50]$, $[14k - 2r + 15,14k - 2r + 16]$, $[32k + 2r + 49,32k + 2r + 50]$, $[16k - 2r + 18,16k - 2r + 19]$, $[30k + 2r + 49,30k + 2r + 50]$, $[38k + 2r + 49,38k + 2r + 50]$, $[24k + 2r + 49,24k + 2r + 50]$, $[36k + 2r + 49,36k + 2r + 50]$, $[10k + 2r + 15,10k + 2r + 16]$, $[16k + 2r + 23,16k + 2r + 24]$, $[34k + 2r + 49,34k + 2r + 50]$, $[22k - 2r + 26,22k - 2r + 27]$, $[26k + 2r + 49,26k + 2r + 50]$ and $[18k + 2r + 25,18k + 2r + 26]$.
Considering $0 \leq r \leq k - 1$, these tiles cover the 
intervals $I_{1 } = [4,4k + 3]$, $I_{3 } = [4k + 8,8k + 7]$, $I_{5 } = [8k + 14,10k + 13]$, $I_{19 } = [22k + 30,24k + 29]$, $I_{23 } = [28k + 49,30k + 48]$, $I_{9 } = [12k + 17,14k + 16]$, $I_{25 } = [32k + 49,34k + 48]$, $I_{11 } = [14k + 20,16k + 19]$, $I_{24 } = [30k + 49,32k + 48]$, $I_{28 } = [38k + 49,40k + 48]$, $I_{21 } = [24k + 49,26k + 48]$, $I_{27 } = [36k + 49,38k + 48]$, $I_{7 } = [10k + 15,12k + 14]$, $I_{13 } = [16k + 23,18k + 22]$, $I_{26 } = [34k + 49,36k + 48]$, $I_{17 } = [20k + 28,22k + 27]$, $I_{22 } = [26k + 49,28k + 48]$ and $I_{15 } = [18k + 25,20k + 24]$.

Concatenating these intervals, we have covered $[1,40k + 50] = I_0 I_1 \cdots I_{29}$.
This construction hence yields a Heffter array
$H(5, 8k+10)$ for all $k \geq 0$.

\bigskip\noindent
{\bf n $\equiv$ 3 (mod 8)}. We write $n=8k+11$. Let
\[ A^T = \left[\begin{smallmatrix}
8k + 13&10k + 16&-18k - 28&-2&1\\
-4k - 7&-12k - 19&16k + 25&-24k - 36&24k + 37\\
-8k - 12&-12k - 20&-24k - 38&20k + 31&24k + 39\\
-4k - 5&22k + 33&-24k - 40&24k + 41&-18k - 29\\
-16k - 26&-8k - 10&24k + 35&24k + 43&-24k - 42\\
-4k - 6&24k + 45&-10k - 17&-24k - 44&14k + 22\\
16k + 24&-24k - 46&24k + 47&-8k - 14&-8k - 11\\
22k + 32&-24k - 48&-18k - 30&-4k - 3&24k + 49\\
-24k - 50&-16k - 27&24k + 34&24k + 51&-8k - 8\\
-10k - 18&24k + 53&14k + 21&-24k - 52&-4k - 4\\
24k + 55&16k + 23&-8k - 9&-8k - 15&-24k - 54\end{smallmatrix}\right], \]
and 
for each $0 \leq r \leq k-1$, let 
\[ A_r = \left[\begin{smallmatrix}
4k - 4r + 1&18k + 2r + 31&-22k + 2r - 31&24k + 2r + 56&-24k - 2r - 57\\
-8k + 4r - 6&-16k - 2r - 28&24k - 2r + 33&-26k - 2r - 56&26k + 2r + 57\\
4k - 4r + 2&10k + 2r + 19&-14k + 2r - 20&28k + 2r + 56&-28k - 2r - 57\\
-8k + 4r - 7&-8k - 2r - 16&16k - 2r + 22&-30k - 2r - 56&30k + 2r + 57\\
-4k + 4r + 1&-18k - 2r - 32&22k - 2r + 30&-32k - 2r - 56&32k + 2r + 57\\
8k - 4r + 4&16k + 2r + 29&-24k + 2r - 32&34k + 2r + 56&-34k - 2r - 57\\
-4k + 4r&-10k - 2r - 20&14k - 2r + 19&-36k - 2r - 56&36k + 2r + 57\\
8k - 4r + 5&8k + 2r + 17&-16k + 2r - 21&38k + 2r + 56&-38k - 2r - 57\end{smallmatrix}\right]. \]

In this case all the row sums and all the column sums of $A$ and every $A_r$ are equal to zero, hence clearly $H$ has row and column sums equal to zero.

The entries in $A$ cover the
intervals $I_{0 } = [1,2]$, $I_{2 } = [4k + 3,4k + 7]$, $I_{4 } = [8k + 8,8k + 15]$, $I_{18 } = [22k + 32,22k + 33]$, $I_{6 } = [10k + 16,10k + 18]$, $I_{8 } = [12k + 19,12k + 20]$, $I_{14 } = [18k + 28,18k + 30]$, $I_{10 } = [14k + 21,14k + 22]$, $I_{16 } = \{20k + 31\}$, $I_{12 } = [16k + 23,16k + 27]$ and $I_{20 } = [24k + 34,24k + 55]$.

 The entries in $A_r$ cover the
intervals $[8k - 4r + 4,8k - 4r + 7]$, $[4k - 4r - 1,4k - 4r + 2]$, $[8k + 2r + 16,8k + 2r + 17]$, $[28k + 2r + 56,28k + 2r + 57]$, $[22k - 2r + 30,22k - 2r + 31]$, $[30k + 2r + 56,30k + 2r + 57]$, $[26k + 2r + 56,26k + 2r + 57]$, $[18k + 2r + 31,18k + 2r + 32]$, $[38k + 2r + 56,38k + 2r + 57]$, $[24k + 2r + 56,24k + 2r + 57]$, $[34k + 2r + 56,34k + 2r + 57]$, $[14k - 2r + 19,14k - 2r + 20]$, $[24k - 2r + 32,24k - 2r + 33]$, $[10k + 2r + 19,10k + 2r + 20]$, $[32k + 2r + 56,32k + 2r + 57]$, $[16k - 2r + 21,16k - 2r + 22]$, $[16k + 2r + 28,16k + 2r + 29]$ and $[36k + 2r + 56,36k + 2r + 57]$.
Considering $0 \leq r \leq k - 1$, these tiles cover the 
intervals $I_{3 } = [4k + 8,8k + 7]$, $I_{1 } = [3,4k + 2]$, $I_{5 } = [8k + 16,10k + 15]$, $I_{23 } = [28k + 56,30k + 55]$, $I_{17 } = [20k + 32,22k + 31]$, $I_{24 } = [30k + 56,32k + 55]$, $I_{22 } = [26k + 56,28k + 55]$, $I_{15 } = [18k + 31,20k + 30]$, $I_{28 } = [38k + 56,40k + 55]$, $I_{21 } = [24k + 56,26k + 55]$, $I_{26 } = [34k + 56,36k + 55]$, $I_{9 } = [12k + 21,14k + 20]$, $I_{19 } = [22k + 34,24k + 33]$, $I_{7 } = [10k + 19,12k + 18]$, $I_{25 } = [32k + 56,34k + 55]$, $I_{11 } = [14k + 23,16k + 22]$, $I_{13 } = [16k + 28,18k + 27]$ and $I_{27 } = [36k + 56,38k + 55]$.

Concatenating these intervals, we have covered $[1,40k + 55] = I_0 I_1 \cdots I_{28}$.
Hence, this construction yields an integer Heffter array $H(5, 8k+11)$ for all $k \geq 0$.

\bigskip\noindent
{\bf n $\equiv$ 4 (mod 8)}. We write $n=8k+12$. Let
\[ A^T = \left[\begin{smallmatrix}
8k + 14&8k + 15&-16k - 28&1&-2\\
-4k - 7&-8k - 16&12k + 22&-24k - 39&24k + 40\\
-8k - 13&-12k - 21&-24k - 41&24k + 42&20k + 33\\
-8k - 12&-14k - 25&22k + 36&24k + 44&-24k - 43\\
-8k - 17&16k + 27&-24k - 45&-8k - 11&24k + 46\\
14k + 24&-10k - 19&24k + 48&-24k - 47&-4k - 6\\
-24k - 49&24k + 50&24k + 38&-16k - 29&-8k - 10\\
-24k - 51&-4k - 5&24k + 52&22k + 35&-18k - 31\\
-8k - 9&24k + 54&-24k - 53&-8k - 18&16k + 26\\
14k + 23&24k + 56&-24k - 55&-4k - 4&-10k - 20\\
24k + 37&-24k - 57&24k + 58&-8k - 8&-16k - 30\\
24k + 60&-24k - 59&-18k - 32&22k + 34&-4k - 3\end{smallmatrix}\right], \]
and 
for each $0 \leq r \leq k-1$, let 
\[ A_r^T = \left[\begin{smallmatrix}
8k - 4r + 7&8k + 2r + 19&-16k + 2r - 25&24k + 2r + 61&-24k - 2r - 62\\
-4k + 4r - 2&-10k - 2r - 21&14k - 2r + 22&-26k - 2r - 61&26k + 2r + 62\\
8k - 4r + 6&16k + 2r + 31&-24k + 2r - 36&28k + 2r + 61&-28k - 2r - 62\\
-4k + 4r - 1&-18k - 2r - 33&22k - 2r + 33&-30k - 2r - 61&30k + 2r + 62\\
-8k + 4r - 5&-8k - 2r - 20&16k - 2r + 24&-32k - 2r - 61&32k + 2r + 62\\
4k - 4r&10k + 2r + 22&-14k + 2r - 21&34k + 2r + 61&-34k - 2r - 62\\
-8k + 4r - 4&-16k - 2r - 32&24k - 2r + 35&-36k - 2r - 61&36k + 2r + 62\\
4k - 4r - 1&18k + 2r + 34&-22k + 2r - 32&38k + 2r + 61&-38k - 2r - 62\end{smallmatrix}\right], \]

In this case all the row sums and all the column sums of $A$ and every $A_r$ are equal to zero, hence clearly $H$ has row and column sums equal to zero.

The entries in $A$ cover the
intervals $I_{0 } = [1,2]$, $I_{2 } = [4k + 3,4k + 7]$, $I_{4 } = [8k + 8,8k + 18]$, $I_{10 } = [14k + 23,14k + 25]$, $I_{8 } = [12k + 21,12k + 22]$, $I_{16 } = \{20k + 33\}$, $I_{14 } = [18k + 31,18k + 32]$, $I_{12 } = [16k + 26,16k + 30]$, $I_{6 } = [10k + 19,10k + 20]$, $I_{18 } = [22k + 34,22k + 36]$ and $I_{20 } = [24k + 37,24k + 60]$.

The entries in $A_r$ cover the
intervals $[8k - 4r + 4,8k - 4r + 7]$, $[4k - 4r - 1,4k - 4r + 2]$, $[8k + 2r + 19,8k + 2r + 20]$, $[22k - 2r + 32,22k - 2r + 33]$, $[34k + 2r + 61,34k + 2r + 62]$, $[30k + 2r + 61,30k + 2r + 62]$, $[38k + 2r + 61,38k + 2r + 62]$, $[24k - 2r + 35,24k - 2r + 36]$, $[24k + 2r + 61,24k + 2r + 62]$, $[26k + 2r + 61,26k + 2r + 62]$, $[10k + 2r + 21,10k + 2r + 22]$, $[32k + 2r + 61,32k + 2r + 62]$, $[16k + 2r + 31,16k + 2r + 32]$, $[16k - 2r + 24,16k - 2r + 25]$, $[36k + 2r + 61,36k + 2r + 62]$, $[28k + 2r + 61,28k + 2r + 62]$, $[14k - 2r + 21,14k - 2r + 22]$ and $[18k + 2r + 33,18k + 2r + 34]$.
Considering $0 \leq r \leq k - 1$, these tiles cover the 
intervals $I_{3 } = [4k + 8,8k + 7]$, $I_{1 } = [3,4k + 2]$, $I_{5 } = [8k + 19,10k + 18]$, $I_{17 } = [20k + 34,22k + 33]$, $I_{26 } = [34k + 61,36k + 60]$, $I_{24 } = [30k + 61,32k + 60]$, $I_{28 } = [38k + 61,40k + 60]$, $I_{19 } = [22k + 37,24k + 36]$, $I_{21 } = [24k + 61,26k + 60]$, $I_{22 } = [26k + 61,28k + 60]$, $I_{7 } = [10k + 21,12k + 20]$, $I_{25 } = [32k + 61,34k + 60]$, $I_{13 } = [16k + 31,18k + 30]$, $I_{11 } = [14k + 26,16k + 25]$, $I_{27 } = [36k + 61,38k + 60]$, $I_{23 } = [28k + 61,30k + 60]$, $I_{9 } = [12k + 23,14k + 22]$ and $I_{15 } = [18k + 33,20k + 32]$.

Concatenating these intervals, we have covered $[1,40k + 60] = I_0 I_1 \cdots I_{28}$.
 This construction hence yields an integer Heffter array
$H(5, 8k+12)$ for all $k \geq 0$.

\bigskip\noindent
{\bf n $\equiv$ 5 (mod 8)}. We write $n=8k+13$. Let
\[ A^T = \left[\begin{smallmatrix}
1&-2&3&40k + 65&40k + 64\\
-8k - 15&-12k - 20&20k + 34&-24k - 40&24k + 41\\
-8k - 14&-10k - 18&18k + 31&-24k - 42&24k + 43\\
-4k - 9&-18k - 29&-24k - 44&24k + 45&22k + 37\\
-8k - 13&-16k - 27&24k + 47&24k + 39&-24k - 46\\
-4k - 7&-18k - 30&24k + 49&-24k - 48&22k + 36\\
20k + 33&-24k - 50&-8k - 12&-12k - 22&24k + 51\\
16k + 25&-24k - 52&-8k - 16&-8k - 10&24k + 53\\
24k + 55&-4k - 6&-12k - 21&-24k - 54&16k + 26\\
24k + 57&-24k - 56&-16k - 28&-8k - 11&24k + 38\\
-18k - 32&22k + 35&-24k - 58&24k + 59&-4k - 4\\
-10k - 19&24k + 61&14k + 23&-4k - 5&-24k - 60\\
-24k - 62&24k + 63&-8k - 8&16k + 24&-8k - 17\end{smallmatrix}\right], \]
and 
for each $0 \leq r \leq k-1$, let 
\[ A_r^T = \left[\begin{smallmatrix}
8k - 4r + 9&16k + 2r + 29&-24k + 2r - 37&24k + 2r + 64&-24k - 2r - 65\\
-4k + 4r - 2&-18k - 2r - 33&22k - 2r + 34&-26k - 2r - 64&26k + 2r + 65\\
4k - 4r + 3&10k + 2r + 20&-14k + 2r - 22&28k + 2r + 64&-28k - 2r - 65\\
-8k + 4r - 6&-8k - 2r - 18&16k - 2r + 23&-30k - 2r - 64&30k + 2r + 65\\
-8k + 4r - 7&-16k - 2r - 30&24k - 2r + 36&-32k - 2r - 64&32k + 2r + 65\\
4k - 4r&18k + 2r + 34&-22k + 2r - 33&34k + 2r + 64&-34k - 2r - 65\\
-4k + 4r - 1&-10k - 2r - 21&14k - 2r + 21&-36k - 2r - 64&36k + 2r + 65\\
8k - 4r + 4&8k + 2r + 19&-16k + 2r - 22&38k + 2r + 64&-38k - 2r - 65\end{smallmatrix}\right]. \]

Then $A$ has column sums $[80k + 131,\, 0,\, 0,\, 0,\, 0,\, 0,\, 0,\, 0,\, 0,\, 0,\, 0,\, 0,\, 0]$ and row sums $[0,\, -80k - 131,\, 0,\, 0,\, 160k + 262]$ while 
 each $A_r$ has all row and column sums zero.  Thus the row and column sums of $H$ are all congruent to $0 \pmod {10n+1}$ as required.

 The entries in $A$ cover the
intervals $I_{0 } = [1,3]$, $I_{6 } = \{8k + 8\}$, $I_{2 } = [4k + 4,4k + 7]$, $I_{4 } = \{4k + 9\}$, $I_{8 } = [8k + 10,8k + 17]$, $I_{10 } = [10k + 18,10k + 19]$, $I_{14 } = \{14k + 23\}$, $I_{33 } = [40k + 64,40k + 65]$, $I_{22 } = [22k + 35,22k + 37]$, $I_{18 } = [18k + 29,18k + 32]$, $I_{16 } = [16k + 24,16k + 28]$, $I_{20 } = [20k + 33,20k + 34]$, $I_{24 } = [24k + 38,24k + 63]$ and $I_{12 } = [12k + 20,12k + 22]$.

 The entries in $A_r$ cover the
intervals $[4k - 4r,4k - 4r + 3]$, $[8k - 4r + 6,8k - 4r + 7]$, $\{8k - 4r + 4\}$, $\{8k - 4r + 9\}$, $[8k + 2r + 18,8k + 2r + 19]$, $[10k + 2r + 20,10k + 2r + 21]$, $[38k + 2r + 64,38k + 2r + 65]$, $[34k + 2r + 64,34k + 2r + 65]$, $[26k + 2r + 64,26k + 2r + 65]$, $[32k + 2r + 64,32k + 2r + 65]$, $[16k + 2r + 29,16k + 2r + 30]$, $[24k - 2r + 36,24k - 2r + 37]$, $[30k + 2r + 64,30k + 2r + 65]$, $[36k + 2r + 64,36k + 2r + 65]$, $[28k + 2r + 64,28k + 2r + 65]$, $[22k - 2r + 33,22k - 2r + 34]$, $[18k + 2r + 33,18k + 2r + 34]$, $[16k - 2r + 22,16k - 2r + 23]$, $[14k - 2r + 21,14k - 2r + 22]$ and $[24k + 2r + 64,24k + 2r + 65]$.
Considering $0 \leq r \leq k - 1$, these tiles cover the 
intervals $I_{1 } = [4,4k + 3]$, $I_{30 } = [8k + 18,10k + 17]$, $I_{26 } = [10k + 20,12k + 19]$, $I_{29 } = [38k + 64,40k + 63]$, $I_{17 } = [34k + 64,36k + 63]$, $I_{23 } = [26k + 64,28k + 63]$, $I_{28 } = [32k + 64,34k + 63]$, $I_{31 } = [16k + 29,18k + 28]$, $I_{27 } = [22k + 38,24k + 37]$, $I_{21 } = [30k + 64,32k + 63]$, $I_{19 } = [36k + 64,38k + 63]$, $I_{15 } = [28k + 64,30k + 63]$, $I_{13 } = [20k + 35,22k + 34]$, $I_{25 } = [18k + 33,20k + 32]$, $I_{3 } = [14k + 24,16k + 23]$, $I_{5 } = [12k + 23,14k + 22]$ and $I_{7 } = [24k + 64,26k + 63]$.
Additionally, we split the intervals $[8k - 4r + 6,8k - 4r + 7]$, $\{8k - 4r + 4\}$ and $\{8k - 4r + 9\}$ into the sequences 
$ (4k + 8, 4k + 12, \cdots, 8k + 4), 
 (4k + 10, 4k + 14, \cdots, 8k + 6), 
 (4k + 11, 4k + 15, \cdots, 8k + 7), $
and $ (4k + 13, 4k + 17, \cdots, 8k + 9), $
 and rejoin them into the intervals $I_{3 } = \{4k + 8\}$, $I_{5 } = [4k + 10,8k + 7]$ and $I_{7 } = \{8k + 9\}$.

Concatenating these intervals, we have covered $[1,40k + 65] = I_0 I_1 \cdots I_{33}$.
Hence, this construction yields a Heffter array $H(5, 8k+13)$ for all $k \geq 0$.

\bigskip\noindent
{\bf n $\equiv$ 6 (mod 8)}. We write $n=8k+14$. Let
\[ A^T = \left[\begin{smallmatrix}
1&-2&3&40k + 70&40k + 69\\
-8k - 16&-12k - 21&20k + 36&24k + 44&-24k - 43\\
-8k - 15&-14k - 26&22k + 40&-24k - 45&24k + 46\\
-8k - 14&-12k - 22&20k + 35&-24k - 47&24k + 48\\
-4k - 9&-16k - 29&20k + 37&-24k - 49&24k + 50\\
-4k - 8&-14k - 25&18k + 32&-24k - 51&24k + 52\\
-8k - 12&-8k - 17&16k + 28&-24k - 53&24k + 54\\
-8k - 13&-16k - 30&24k + 56&24k + 42&-24k - 55\\
-4k - 6&-24k - 57&-10k - 19&24k + 58&14k + 24\\
-4k - 7&24k + 60&-24k - 59&-18k - 33&22k + 39\\
-8k - 10&16k + 27&24k + 62&-8k - 18&-24k - 61\\
-16k - 31&-24k - 63&-8k - 11&24k + 64&24k + 41\\
24k + 66&-4k - 4&-24k - 65&14k + 23&-10k - 20\\
-24k - 67&24k + 68&-18k - 34&-4k - 5&22k + 38\end{smallmatrix}\right], \]
and 
for each $0 \leq r \leq k-1$, let 
\[ A_r^T = \left[\begin{smallmatrix}
8k - 4r + 8&8k + 2r + 19&-16k + 2r - 26&24k + 2r + 69&-24k - 2r - 70\\
-8k + 4r - 9&-16k - 2r - 32&24k - 2r + 40&-26k - 2r - 69&26k + 2r + 70\\
4k - 4r + 2&10k + 2r + 21&-14k + 2r - 22&28k + 2r + 69&-28k - 2r - 70\\
-4k + 4r - 3&-18k - 2r - 35&22k - 2r + 37&-30k - 2r - 69&30k + 2r + 70\\
-8k + 4r - 6&-8k - 2r - 20&16k - 2r + 25&-32k - 2r - 69&32k + 2r + 70\\
8k - 4r + 7&16k + 2r + 33&-24k + 2r - 39&34k + 2r + 69&-34k - 2r - 70\\
-4k + 4r&-10k - 2r - 22&14k - 2r + 21&-36k - 2r - 69&36k + 2r + 70\\
4k - 4r + 1&18k + 2r + 36&-22k + 2r - 36&38k + 2r + 69&-38k - 2r - 70\end{smallmatrix}\right]. \]

Then $A$ has column sums  $[80k + 141,\, 0,\, 0,\, 0,\, 0,\, 0,\, 0,\, 0,\, 0,\, 0,\, 0,\, 0,\, 0,\, 0]$ and row sums $[-80k - 141,\, -80k - 141,\, 80k + 141,\, 0,\, 160k + 282]$ while 
 each $A_r$ has all row and column sums zero.  Thus the row and column sums of $H$ are all congruent to $0 \pmod {10n+1}$ as required.

 The entries in $A$ cover the
intervals $I_{0 } = [1,3]$, $I_{2 } = [4k + 4,4k + 9]$, $I_{4 } = [8k + 10,8k + 18]$, $I_{8 } = [12k + 21,12k + 22]$, $I_{10 } = [14k + 23,14k + 26]$, $I_{18 } = [22k + 38,22k + 40]$, $I_{29 } = [40k + 69,40k + 70]$, $I_{12 } = [16k + 27,16k + 31]$, $I_{14 } = [18k + 32,18k + 34]$, $I_{6 } = [10k + 19,10k + 20]$, $I_{20 } = [24k + 41,24k + 68]$ and $I_{16 } = [20k + 35,20k + 37]$.

The entries in $A_r$ cover the
intervals $[4k - 4r,4k - 4r + 3]$, $[8k - 4r + 6,8k - 4r + 9]$, $[8k + 2r + 19,8k + 2r + 20]$, $[30k + 2r + 69,30k + 2r + 70]$, $[10k + 2r + 21,10k + 2r + 22]$, $[26k + 2r + 69,26k + 2r + 70]$, $[22k - 2r + 36,22k - 2r + 37]$, $[38k + 2r + 69,38k + 2r + 70]$, $[16k + 2r + 32,16k + 2r + 33]$, $[28k + 2r + 69,28k + 2r + 70]$, $[24k - 2r + 39,24k - 2r + 40]$, $[14k - 2r + 21,14k - 2r + 22]$, $[24k + 2r + 69,24k + 2r + 70]$, $[34k + 2r + 69,34k + 2r + 70]$, $[18k + 2r + 35,18k + 2r + 36]$, $[36k + 2r + 69,36k + 2r + 70]$, $[16k - 2r + 25,16k - 2r + 26]$ and $[32k + 2r + 69,32k + 2r + 70]$.
Considering $0 \leq r \leq k - 1$, these tiles cover the 
intervals $I_{1 } = [4,4k + 3]$, $I_{3 } = [4k + 10,8k + 9]$, $I_{5 } = [8k + 19,10k + 18]$, $I_{24 } = [30k + 69,32k + 68]$, $I_{7 } = [10k + 21,12k + 20]$, $I_{22 } = [26k + 69,28k + 68]$, $I_{17 } = [20k + 38,22k + 37]$, $I_{28 } = [38k + 69,40k + 68]$, $I_{13 } = [16k + 32,18k + 31]$, $I_{23 } = [28k + 69,30k + 68]$, $I_{19 } = [22k + 41,24k + 40]$, $I_{9 } = [12k + 23,14k + 22]$, $I_{21 } = [24k + 69,26k + 68]$, $I_{26 } = [34k + 69,36k + 68]$, $I_{15 } = [18k + 35,20k + 34]$, $I_{27 } = [36k + 69,38k + 68]$, $I_{11 } = [14k + 27,16k + 26]$ and $I_{25 } = [32k + 69,34k + 68]$.

Concatenating these intervals, we have covered $[1,40k + 70] = I_0 I_1 \cdots I_{29}$.
This construction hence yields a Heffter array $H(5, 8k+13)$ for all $k \geq 0$.

\bigskip\noindent
{\bf n $\equiv$ 7 (mod 8)}. We write $n=8k+7$. Let
\[ A^T = \left[\begin{smallmatrix}
8k + 9&10k + 11&-18k - 19&1&-2\\
-4k - 5&16k + 17&-12k - 13&-24k - 24&24k + 25\\
24k + 27&-24k - 26&-8k - 8&20k + 21&-12k - 14\\
-4k - 3&22k + 22&24k + 29&-18k - 20&-24k - 28\\
-24k - 31&-24k - 23&8k + 6&16k + 18&24k + 30\\
-24k - 32&24k + 33&14k + 15&-10k - 12&-4k - 4\\
24k + 35&-24k - 34&-8k - 10&16k + 16&-8k - 7\end{smallmatrix}\right], \]
and 
for each $0 \leq r \leq k-1$, let 
\[ A_r^T = \left[\begin{smallmatrix}
4k - 4r + 1&18k + 2r + 21&-22k + 2r - 21&24k + 2r + 36&-24k - 2r - 37\\
-8k + 4r - 4&-16k - 2r - 19&24k - 2r + 22&-26k - 2r - 36&26k + 2r + 37\\
4k - 4r + 2&10k + 2r + 13&-14k + 2r - 14&28k + 2r + 36&-28k - 2r - 37\\
-8k + 4r - 5&-8k - 2r - 11&16k - 2r + 15&-30k - 2r - 36&30k + 2r + 37\\
-4k + 4r + 1&-18k - 2r - 22&22k - 2r + 20&-32k - 2r - 36&32k + 2r + 37\\
8k - 4r + 2&16k + 2r + 20&-24k + 2r - 21&34k + 2r + 36&-34k - 2r - 37\\
-4k + 4r&-10k - 2r - 14&14k - 2r + 13&-36k - 2r - 36&36k + 2r + 37\\
8k - 4r + 3&8k + 2r + 12&-16k + 2r - 14&38k + 2r + 36&-38k - 2r - 37\end{smallmatrix}\right]. \]

In this case all the row sums and all the column sums of $A$ and every $A_r$ are equal to zero, hence clearly $H$ has row and column sums equal to zero.

The entries in $A$ cover the
intervals $I_{0 } = [1,2]$, $I_{2 } = [4k + 3,4k + 5]$, $I_{4 } = [8k + 6,8k + 10]$, $I_{8 } = [12k + 13,12k + 14]$, $I_{20 } = [24k + 23,24k + 35]$, $I_{6 } = [10k + 11,10k + 12]$, $I_{12 } = [16k + 16,16k + 18]$, $I_{10 } = \{14k + 15\}$, $I_{16 } = \{20k + 21\}$, $I_{14 } = [18k + 19,18k + 20]$ and $I_{18 } = \{22k + 22\}$.

The entries in $A_r$ cover the
intervals $[4k - 4r - 1,4k - 4r + 2]$, $[8k - 4r + 2,8k - 4r + 5]$, $[8k + 2r + 11,8k + 2r + 12]$, $[24k - 2r + 21,24k - 2r + 22]$, $[10k + 2r + 13,10k + 2r + 14]$, $[36k + 2r + 36,36k + 2r + 37]$, $[32k + 2r + 36,32k + 2r + 37]$, $[34k + 2r + 36,34k + 2r + 37]$, $[26k + 2r + 36,26k + 2r + 37]$, $[28k + 2r + 36,28k + 2r + 37]$, $[16k - 2r + 14,16k - 2r + 15]$, $[30k + 2r + 36,30k + 2r + 37]$, $[16k + 2r + 19,16k + 2r + 20]$, $[22k - 2r + 20,22k - 2r + 21]$, $[38k + 2r + 36,38k + 2r + 37]$, $[14k - 2r + 13,14k - 2r + 14]$, $[24k + 2r + 36,24k + 2r + 37]$ and $[18k + 2r + 21,18k + 2r + 22]$.
Considering $0 \leq r \leq k - 1$, these tiles cover the 
intervals $I_{1 } = [3,4k + 2]$, $I_{3 } = [4k + 6,8k + 5]$, $I_{5 } = [8k + 11,10k + 10]$, $I_{19 } = [22k + 23,24k + 22]$, $I_{7 } = [10k + 13,12k + 12]$, $I_{27 } = [36k + 36,38k + 35]$, $I_{25 } = [32k + 36,34k + 35]$, $I_{26 } = [34k + 36,36k + 35]$, $I_{22 } = [26k + 36,28k + 35]$, $I_{23 } = [28k + 36,30k + 35]$, $I_{11 } = [14k + 16,16k + 15]$, $I_{24 } = [30k + 36,32k + 35]$, $I_{13 } = [16k + 19,18k + 18]$, $I_{17 } = [20k + 22,22k + 21]$, $I_{28 } = [38k + 36,40k + 35]$, $I_{9 } = [12k + 15,14k + 14]$, $I_{21 } = [24k + 36,26k + 35]$ and $I_{15 } = [18k + 21,20k + 20]$.

Concatenating these intervals, we have covered $[1,40k + 35] = I_0 I_1 \cdots I_{28}$.
 This construction hence yields an integer Heffter array
$H(5, 8k+7)$ for all $k \geq 0$.  

This concludes the proof of the theorem.
\end{proof}


\section {\label{extending} Constructing H($m,n$) for $m$ odd and $n$ even}

As previously noted we will add additional rows to the $H(3,n)$ and to the $H(5,n)$ constructed in the previous sections to get $H(m,n)$ for odd values of $m$.  The first cases we consider  are the easiest ones; namely the cases when the  $H(3,n)$ and  $H(5,n)$  are integer and we add an even number of rows to make an integer $H(m,n)$.  It is easy to see that this implies that $n \equiv 0 \pmod{4}$.
The next theorem considers this case.

\begin{theorem}\label{oddby4k_theorem} There exists an integer Heffter array $H(m,n)$ for all odd $m\geq 3$ and all $n \equiv 0 \pmod{4}$.
\end{theorem}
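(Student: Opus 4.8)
The plan is to build the array by vertical stacking, using as ingredients the integer $3\times n$ and $5\times n$ arrays from Theorems~\ref{3byn_theorem} and~\ref{5byn_theorem} together with the even-by-even shiftable arrays from Theorem~\ref{evenbyeventhm}. Since $m$ is odd and at least $3$, either $m\equiv 3\pmod 4$, in which case I would write $m=4t+3$ with $t\ge 0$, or $m\equiv 1\pmod 4$, in which case $m\ge 5$ and I would write $m=4t+5$ with $t\ge 0$. In the first case set $b=3$ and let $B$ be an integer Heffter array $H(3,n)$, which exists because $n\equiv 0\pmod 4$ satisfies the hypothesis $n\equiv 0,1\pmod 4$ of Theorem~\ref{3byn_theorem}; in the second case set $b=5$ and let $B$ be an integer Heffter array $H(5,n)$, which exists because $n\equiv 0\pmod 4$ satisfies the hypothesis $n\equiv 0,3\pmod 4$ of Theorem~\ref{5byn_theorem}. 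In both cases $B$ is a $b\times n$ integer array with every row and column sum $0$ over $\mathbb{Z}$ and $sup(B)=\{1,\dots,bn\}$.

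Next, since $n\equiv 0\pmod 4$ the integer $n$ is even with $n\ge 4$, so Theorem~\ref{evenbyeventhm} supplies a shiftable integer array $C=H_s(4,n)$ with $sup(C)=\{1,\dots,4n\}$. For $1\le i\le t$ I would form the shifted copy $C_i = C \pm \bigl(bn+4(i-1)n\bigr)$. Because $C$ is shiftable, each $C_i$ still has every row and column sum equal to $0$, and by the choice of shift $sup(C_i)=\{bn+4(i-1)n+1,\dots,bn+4in\}$. Now stack $B$ on top of $C_1,C_2,\dots,C_t$ to obtain an $m\times n$ array $H$, where $m=b+4t$ as required.

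It then remains to check that $H$ is an integer $H(m,n)$, which is routine. Every row of $H$ lies in one of the blocks $B,C_1,\dots,C_t$, hence sums to $0$ over $\mathbb{Z}$. The $j$-th column of $H$ is the vertical concatenation of the $j$-th columns of $B,C_1,\dots,C_t$, each of which sums to $0$, so every column sum is $0$ over $\mathbb{Z}$. Finally, the supports of the blocks telescope, $\{1,\dots,bn\}\cup\{bn+1,\dots,bn+4n\}\cup\cdots\cup\{bn+4(t-1)n+1,\dots,bn+4tn\}=\{1,\dots,(b+4t)n\}=\{1,\dots,mn\}$, a disjoint union, so $sup(H)=\{1,\dots,mn\}$ and in particular no element of $\{x,-x\}$ occurs twice. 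Hence $H$ is an integer Heffter array $H(m,n)$.

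I do not expect a genuine obstacle here: the content is a direct gluing argument, and the only points requiring care are the arithmetic of the shift amounts, so that the block supports partition $\{1,\dots,mn\}$ exactly, and the observation that the hypothesis $n\equiv 0\pmod 4$ is precisely what makes both an integer $3\times n$ base and an integer $5\times n$ base available while also guaranteeing $n\ge 4$ so that the $4\times n$ shiftable filler exists.
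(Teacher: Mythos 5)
Your proposal is correct and follows essentially the same route as the paper: place an integer $H(3,n)$ or $H(5,n)$ (according to $m \bmod 4$) on top and fill the remaining rows with shifted shiftable even-by-even arrays. The only cosmetic difference is that you stack $t$ individually shifted copies of $H_s(4,n)$ where the paper uses a single $H_s(4t,n)\pm bn$; since that array is itself assembled from shifted $H_s(4,n)$ tiles, the two constructions coincide.
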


\begin{proof}First assume that  $m \equiv 1 \pmod 4$.  Write $m =4k+5$ with $k \geq 0$.  By Theorem \ref{5byn_theorem} there exists an integer Heffter array $A = H(5,n)$.  Additionally, let $B$ be an integer shiftable  Heffter array $H(4k,n)$ which exists by Theorem \ref{evenbyeventhm}.
Then, it is easy to see that the array 

$$H= \begin{array}{|c|} \hline
A    \\ \hline
\hspace*{.5in}B \pm 5n \hspace*{.5in} \\ \hline
\end{array}
$$
 is an integer $H(4k+5,n)$.

\medskip
Now assume that  $m \equiv 3 \pmod 4$.  Write $m =4k+3$ with $k \geq 0$.  By Theorem \ref{3byn_theorem} there exists an integer Heffter array $A = H(3,n)$.  Additionally, let $B$ be an integer shiftable  Heffter array $H(4k,n)$ which exists by Theorem \ref{evenbyeventhm}.
Then, it is again clear that the array 

$$H= \begin{array}{|c|} \hline
A    \\ \hline
\hspace*{.5in}B \pm 3n \hspace*{.5in} \\ \hline
\end{array}
$$
 is an integer $H(4k+3,n)$.
\end{proof}

Next we wish to consider the case when $n \equiv 2 \pmod 4$. So the $H(3,n)$ and  $H(5,n)$ constructed earlier are necessarily noninteger and if we add an even number of additional rows the resulting array will be noninteger also.    This is a harder case for the following reason.  If we just try to mimic the previous theorem and place $A$, an $H(3,n)$ from Theorem  \ref{3byn_theorem} above a shiftable Heffter array $H(4k,n)$ we will have that at least one of the rows (and one of the columns) of $A$ add to 0 modulo $6n+1$.   Hence when these rows are placed in the bigger array (which is $(4k+3) \times n$)  they will probably not add to 0 modulo $2(4k+3)n+1$ and hence it won't be a Heffter array.  In other words, since the modulus changes when we construct a different size array, if a row or column adds to zero in the old modulus, this in no ways implies it adds to zero in the new one.  We address this problem in the next theorem and solve it via a so called {\em variable tile}.  (Again, the interested reader is referred to \cite{Tom-thesis} for a discussion of how the tiles in this section were originally constructed.)

\begin{theorem}\label{oddby4k+2_theorem} There exists a Heffter array $H(m,n)$ for all odd $m\geq 3$ and all $n \equiv 2 \pmod{4}$.
\end{theorem}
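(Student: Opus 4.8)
The plan is to split on $m \bmod 4$. Since $m$ is odd we have $m \equiv 1$ or $3 \pmod 4$; write $m = 4k+5$ in the first case and $m = 4k+3$ in the second, so that $k \ge 0$ and the number of rows decomposes as (a $5$- or $3$-row core) plus a multiple of $4$. The base cases $k=0$, i.e.\ $m=5$ and $m=3$, are exactly Theorems \ref{5byn_theorem} and \ref{3byn_theorem}, so assume $k \ge 1$. Mimicking Theorem \ref{oddby4k_theorem}, I would build $H(m,n)$ by stacking a core of $5$ (resp.\ $3$) rows on top of a shiftable integer array $H_s(4k,n)$, which exists by Theorem \ref{evenbyeventhm} since $4k \ge 4$ and $n \ge 6$ is even, with all shifts chosen so that the support of the whole array is exactly $\{1,\dots,mn\}$.

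The obstacle is precisely the one highlighted before the statement: because $n \equiv 2 \pmod 4$, any $3 \times n$ or $5 \times n$ core is non-integer, and the $H(3,n)$, $H(5,n)$ of the previous sections have rows and columns vanishing only modulo $6n+1$ and $10n+1$ respectively. Those residues are not $0$ modulo the ambient modulus $2mn+1$ once $k \ge 1$, so a naive stack fails. The fix is a \emph{variable tile}: a small block whose entries are affine in an auxiliary parameter, engineered so that its column sums are $0$ over $\mathbb{Z}$ except for one distinguished column whose sum can be set equal to $2mn+1$, while its row sums can simultaneously be steered to multiples of $2mn+1$ with the correct total. Taking this tile in place of the sporadic tile of the core, with the auxiliary parameter chosen as an explicit function of $k$ and $n$ (which also fixes the block of values the tile consumes), makes every row and every column of the assembled array sum to $0 \pmod{2mn+1}$.

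Granting the existence of such a tile, the remaining work is the interval bookkeeping automated in Section \ref{justify}: concatenate the variable tile with the $k$ family tiles $A_r$ to form the core, place it above $H_s(4k,n)$ shifted appropriately, and check that the multiset of absolute values of all entries equals $\{1,\dots,mn\}$ by writing it as a concatenation $I_0 I_1 \cdots I_f$ of intervals in the parameter $k$. A handful of genuinely small $(m,n)$ (for instance the least admissible ones) will not fit the generic pattern and will be exhibited explicitly, as was done for $H(3,3)$ and $H(3,4)$. Since the family tiles are row and column Heffter and the variable tile together with the shiftable block were arranged to vanish (in modulus) in each row and column, the assembled matrix is a Heffter array $H(m,n)$.

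The hard part is constructing the variable tile itself: it must (i) realize the prescribed row and column residues modulo $2mn+1$ for \emph{all} $k \ge 1$ at once, which forces its entries to be honest polynomials in the auxiliary parameter rather than constants; (ii) occupy a block of values that interlaces cleanly with the values used by the family tiles and by the shifted shiftable block, so that the total support is $\{1,\dots,mn\}$ with no value repeated up to sign; and (iii) use no zero entry. This is exactly the kind of search outlined in Section \ref{construction}, carried out in the appropriate quotient of $\mathbb{Z}[k]$ so that the tile works uniformly in $k$; once found, its correctness is verified symbolically by the method of Section \ref{justify}.
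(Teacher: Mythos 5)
Your overall strategy is the same as the paper's: handle $m=3$ and $m=5$ by Theorems \ref{3byn_theorem} and \ref{5byn_theorem}, and for larger odd $m$ place a short top strip built from a ``variable tile'' plus a family of zero-sum tiles above a shifted shiftable block $H_s(\cdot,n)$ from Theorem \ref{evenbyeventhm}, with the variable tile's distinguished row and column sums equal to $2mn+1$. That is exactly the mechanism of the paper's proof. However, as written your argument has a genuine gap: the variable tile, the accompanying family tiles, and the interval decomposition of $\{1,\dots,mn\}$ are never exhibited — you explicitly defer them (``granting the existence of such a tile\dots''). For a theorem whose entire proof content is an explicit construction verified cell by cell, postulating that a suitable tile can be found by search is not a proof; the existence of a tile satisfying simultaneously your conditions (i)--(iii) uniformly in the parameters is precisely the nontrivial claim, and it required a dedicated (computer-assisted) construction in the paper. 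Nothing in your write-up rules out the possibility that no such tile exists for your chosen shape.

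There is also a concrete reason to doubt that your specific shape — a $3$-row or $5$-row core obtained from the Section \ref{3byn}/\ref{5byn} arrays by swapping out only the sporadic tile — can be made to work without redesigning everything. The family tiles $A_r$ of those sections for $n\equiv 2\pmod 4$ have nonzero integer row sums calibrated so that the totals come out to multiples of $6n+1$ (resp. $10n+1$), and their supports are engineered to tile all of $[1,3n]$ (resp. $[1,5n]$); in your stacked array the core must instead have row sums that are multiples of $2mn+1$ and a support that is a prescribed $3n$- (resp. $5n$-) element subset of $[1,mn]$ containing entries of size close to $mn$ (needed to reach $\pm(2mn+1)$) while leaving a clean interval for the shifted block $E$. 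The paper sidesteps this by abandoning the old cores entirely: it builds fresh $9\times 6$ and $7\times 6$ variable tiles whose support is $[1,6]$ together with the top $48$ (resp. $36$) values of $[1,mn]$, fresh $9\times 4$ and $7\times 4$ family tiles with \emph{all} row and column sums zero, and a shiftable block occupying the interval in between. So the honest conclusion is that your plan identifies the right idea but stops exactly where the work begins; to complete it you must either display tiles with the required properties (and verify the support decomposition as in Section \ref{justify}) or cite the explicit $7\times 6$ and $9\times 6$ constructions.
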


\begin{proof}
First assume that $m \equiv 1 \pmod 4$ with $m\geq 5$.  We note that if $m=5$, this case is solved in Theorem \ref{5byn_theorem}. So assume that  $m \equiv 1 \pmod 4, m\geq 9$ and $n \equiv 2 \pmod 4, n \geq 6$.  Write $m = 4s+1$ with $s \geq 2$ and $n =4k+2$ with $k \geq 1$.  We begin this construction with a $9 \times 6$ variable tile

\[ A = \left[\begin{smallmatrix}
x - 10&-x + 9&5&x - 3&x - 4&4\\
-x + 6&x - 8&-x + 16&x - 19&-x + 20&x - 15\\
6&-x + 23&-x + 7&-1&x - 21&x - 14\\
x&x - 22&x - 11&-x + 18&-x + 2&-x + 13\\
x - 1&-2&x - 17&-x + 5&3&-x + 12\\
x - 47&-x + 46&x - 45&-x + 44&x - 37&-x + 39\\
-x + 41&x - 40&-x + 36&x - 38&-x + 43&x - 42\\
-x + 35&x - 34&-x + 33&x - 32&-x + 25&x - 27\\
x - 29&-x + 28&x - 24&-x + 26&x - 31&-x + 30\end{smallmatrix}\right].
\]
Note that  $A$ has row sums $[2x + 1,\, 0,\, 0,\, 0,\, 0,\, 0,\, 0,\, 0,\, 0]$ and column sums $[2x + 1,\, 0,\, 0,\, 0,\, 0,\, 0]$.   So by setting the variable $x$ to different values (as a function of $m$ and $n$) we can adjust these row and column sums to be zero modulo $2mn+1$.   Let $x = mn=(4s+1)(4k+2)$.  Now we have that the entries in $A$ cover the
intervals $I_{0 } = [1,6]$ (the lowest possible numbers)  and $I_{8 } = [16s k + 8s + 4k - 45,16s k + 8s + 4k + 2]$ (the top numbers).

For each $0 \leq r \leq k-2$ let
\[
 A_r = \left[\begin{smallmatrix}
4r + 7&-4r - 8&-4r - 9&4r + 10\\
6k - 2r&-10k + 2r + 4&-6k + 2r + 1&10k - 2r - 5\\
-6k - 2r - 1&10k + 2r - 3&6k + 2r + 2&-10k - 2r + 2\\
12k + 24r - 5&-12k - 24r + 3&-12k - 24r + 1&12k + 24r + 1\\
-12k - 24r + 4&12k + 24r - 2&12k + 24r&-12k - 24r - 2\\
12k + 24r + 3&-12k - 24r - 4&-12k - 24r - 7&12k + 24r + 8\\
-12k - 24r - 5&12k + 24r + 6&12k + 24r + 9&-12k - 24r - 10\\
12k + 24r + 11&-12k - 24r - 12&12k + 24r + 18&-12k - 24r - 17\\
-12k - 24r - 14&12k + 24r + 16&-12k - 24r - 15&12k + 24r + 13\end{smallmatrix}\right], \]

We note here that a discussion of how the tiles $A$ and $A_r$ were found appears in \cite{Tom-thesis}.
Each $A_r$ has all row and column sums equal to zero.  
The entries in $A_r$ cover the
intervals $[4r + 7,4r + 10]$, $[6k - 2r - 1,6k - 2r]$, $[6k + 2r + 1,6k + 2r + 2]$, $[10k + 2r - 3,10k + 2r - 2]$, $[10k - 2r - 5,10k - 2r - 4]$ and $[12k + 24r - 5,12k + 24r + 18]$.
Considering $0 \leq r \leq k - 2$, these tiles cover the 
intervals $I_{1 } = [7,4k + 2]$, $I_{2 } = [4k + 3,6k]$, $I_{3 } = [6k + 1,8k - 2]$, $I_{5 } = [10k - 3,12k - 6]$, $I_{4 } = [8k - 1,10k - 4]$ and $I_{6 } = [12k - 5,36k - 30]$.

Finally let  $E_0$ be a $(4s - 8 \times 4k + 2)$ shiftable integer Heffter array
and $E = E_0 \pm (36k - 30)$.  Then $sup(E)=I_{7 } = [36k - 29,8{\left(2k + 1\right)} {\left(s - 2\right)} + 36k - 30]$. Note that $sup(E)$ contains the values just under the largest values in $sup(A)$.
Concatenating these intervals, we have covered $[1,16s k + 8s + 4k + 2] =[1,mn]  = I_0 I_1 \cdots I_{8}$.

Now, put these all together as follows to construct an array $H$ were
\[H= 
  \begin{array}{|c|c|c|c|} \hline
   A & A_0 &\hspace*{.3in} \cdots\hspace*{.3in} & A_{k-2} \\
   \hline
   \multicolumn{4}{|c|}{E} \\ \hline
  \end{array}
\]
 It is clear that each row and column sum is 0 modulo $mn +1$ and thus $H$ is an  $H(4s+1,4k+2)$

\medskip
We now turn to the case when 
$m \equiv 3 \pmod 4$ with $m\geq 3$. This case will proceed in a similar manner to the previous case.  We note that if $m=3$, this case is solved in Theorem \ref{3byn_theorem}. So assume that  $m \equiv 3 \pmod 4, m\geq 7$ and $n \equiv 2 \pmod 4, n \geq 6$.   Write $m = 4s+3$ with $s \geq 2$ and $n =4k+2$ with $k \geq 1$.  We begin this construction with a $7 \times 6$ variable tile

\[ A = \left[\begin{smallmatrix}
2&x - 2&x - 4&3&x - 5&-x + 7\\
x - 1&-x + 8&-x + 3&x - 9&-5&4\\
x&-6&1&-x + 6&-x + 10&x - 11\\
x - 35&-x + 34&x - 33&-x + 32&x - 25&-x + 27\\
-x + 29&x - 28&-x + 24&x - 26&-x + 31&x - 30\\
-x + 23&x - 22&-x + 21&x - 20&-x + 13&x - 15\\
x - 17&-x + 16&x - 12&-x + 14&x - 19&-x + 18\end{smallmatrix}\right]. \]

Note that $A$ has  row sums $[2x + 1,\, 0,\, 0,\, 0,\, 0,\, 0,\, 0]$ and column sums $[2x + 1,\, 0,\, 0,\, 0,\, 0,\, 0].$  Let $x=mn = (4s+3)(4k+2)$ and  that the entries in $A$ cover the
intervals $I_{0 } = [1,6]$ and $I_{8 } = [16sk + 8s + 12k - 29,16sk + 8s + 12k + 6]$.

Now for each $0 \leq r \leq k-2$ let
\[ A_r = \left[\begin{smallmatrix}
4r + 7&-4r - 8&-4r - 9&4r + 10\\
6k - 2r&-10k + 2r + 4&-6k + 2r + 1&10k - 2r - 5\\
-6k - 2r - 1&10k + 2r - 3&6k + 2r + 2&-10k - 2r + 2\\
12k + 16r - 5&-12k - 16r + 4&12k + 16r + 2&-12k - 16r - 1\\
-12k - 16r + 3&12k + 16r - 1&-12k - 16r&12k + 16r - 2\\
12k + 16r + 3&-12k - 16r - 4&-12k - 16r - 5&12k + 16r + 6\\
-12k - 16r - 7&12k + 16r + 8&12k + 16r + 9&-12k - 16r - 10\end{smallmatrix}\right]. \]

Each $A_r$ has all row and column sums equal to zero.  
The entries in $A_r$ cover the
intervals 
$[4r + 7,4r + 10]$, $[6k - 2r - 1,6k - 2r]$, $[6k + 2r + 1,6k + 2r + 2]$, $[10k + 2r - 3,10k + 2r - 2]$, $[10k - 2r - 5,10k - 2r - 4]$ and $[12k + 16r - 5,12k + 16r + 10]$.
Considering $0 \leq r \leq k - 2$, these blocks cover the 
intervals $I_{1 } = [7,4k + 2]$, $I_{2 } = [4k + 3,6n]$, $I_{3 } = [6k + 1,8k - 2]$, $I_{5 } = [10k - 3,12k - 6]$, $I_{4 } = [8k - 1,10k - 4]$ and $I_{6 } = [12k - 5,28k - 22]$.

Additionally, let $E_0$ be a $(4s - 4 \times 4k + 2)$ shiftable Heffter array
and $E = E_0 \pm (28k - 22)$.  Then $sup(E)=I_{7 } = [28k - 21,8{\left(2k + 1\right)} {\left(m - 1\right)} + 28k - 22]$.
Concatenating these intervals, we have covered $[1,16m n + 8m + 12n + 6]=[1,mn]= I_0 I_1 \cdots I_{8}$.

Now, put these all together as follows to construct an array $H$ were
\[H= 
  \begin{array}{|c|c|c|c|} \hline
   A & A_0 &\hspace*{.3in} \cdots\hspace*{.3in} & A_{k-2} \\
   \hline
   \multicolumn{4}{|c|}{E} \\ \hline
  \end{array}
\]
 It is again clear that each row and column sum is 0 modulo $mn +1$ and thus $H$ is an  $H(4s+1,4k+2)$.  This completes the proof.
\end{proof}

\section {\label{ells}H($m,n$)  with $m$ and $n$ both odd: the L-constructions}

We are now up to the case of $H(m,n)$  with $m$ and $n$ both odd.  As before we still wish to use large shiftable arrays in the construction, however these have both $m$ and $n$ even.  So in order to use these arrays we need to add an odd width border on the top and on the side.  We call this an L-construction. 

The first case is when both $m,n \equiv 1 \pmod 4$, in this case we construct a $H(m,n)$ which will have a border with 9 rows and 9 columns.   More details on how the ingredients were found in each of the next three theorems can again be found in \cite{Tom-thesis}.

\begin{theorem}\label{1x1_mod4.theorem} There exists a Heffter array $H(m,n)$ for all $m,n \equiv 1 \pmod{4}$, $m,n \geq 9$.
\end{theorem}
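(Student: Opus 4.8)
The plan is to carry out the L-construction advertised in the section heading. Since $m\equiv n\equiv 1\pmod 4$ and $m,n\ge 9$, write $m=4p+9$ and $n=4q+9$ with $p,q\ge 0$. Split the $m\times n$ grid into a bottom-right $(m-9)\times(n-9)$ sub-rectangle and the complementary ``L'' consisting of the top $9$ rows together with the leftmost $9$ columns, and decompose the L further into a $9\times 9$ \emph{corner} $C$ (the top-left $9\times 9$ block), a $9\times(n-9)$ horizontal arm, and an $(m-9)\times 9$ vertical arm. Because $m-9\equiv n-9\equiv 0\pmod 4$, Theorem~\ref{evenbyeventhm} provides a shiftable integer array $H_s(4p,4q)$ (when $p,q\ge 1$), which I would shift appropriately and drop into the bottom-right rectangle; since it is shiftable it contributes $0$ to every row and column sum it meets.

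Next I would build the two arms, exactly in the spirit of the $3\times n$ and $5\times n$ constructions of Sections~\ref{3byn} and~\ref{5byn}: concatenate the horizontal arm from $q$ copies of a $9\times 4$ tile $A_r$, $0\le r\le q-1$, and stack the vertical arm from $p$ copies of a $4\times 9$ tile, each tile having all of its row and column sums equal to $0$ over $\ZZ$ (these tiles are assembled from $2\times 4$ variable blocks and a $1\times 4$ strip, as indicated at the end of Section~\ref{construction}). Being column Heffter, the arm tiles — together with the shiftable block — force each of the $n-9$ ``long'' columns and each of the $m-9$ ``long'' rows of $H$ to sum to $0$; being row Heffter too, they make the row sums of the top $9$-row strip equal to the row sums of $C$ and the column sums of the left $9$-column strip equal to the column sums of $C$.

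It then remains to exhibit $C$. I would take $C$ to be a single $9\times 9$ \emph{variable tile} whose entries are each $\pm$ a small constant or $\pm$ a linear expression in one variable $x$, designed to have row sums $[2x+1,0,\dots,0]$ and column sums $[2x+1,0,\dots,0]$, just like the $7\times 6$ and $9\times 6$ variable tiles of Section~\ref{extending}. Setting $x=mn=(4p+9)(4q+9)$ then makes every row and every column sum of $H$ congruent to $0$ modulo $2mn+1$, so $H$ is a Heffter array (necessarily non-integer, since $mn\equiv 1\pmod 4$). The constant entries of $C$ should occupy an initial interval $[1,c]$ of the support and its $x$-entries a final interval $[mn-d+1,mn]$ with $c+d=81$, and the arm tiles together with the shifted $H_s(4p,4q)$ should then tile the middle block $[c+1,mn-d]$ — of size $36(p+q)+16pq$ — uniformly in $p,q$. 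The degenerate cases $p=0$ and/or $q=0$ (an arm and/or the block being empty, with $p=q=0$ reducing to an $H(9,9)$ read off from $C$ at $x=81$) are covered by the same construction with the vanishing pieces deleted, and any genuinely small leftover orders by direct example. Finally, the bookkeeping that $sup(H)=[1,mn]$ and that all sums vanish modulo $2mn+1$ is checked by the Sage procedure of Section~\ref{justify}, run symbolically over $\ZZ[x,p,q]$ and reduced modulo $2mn+1$ where needed.

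As in Section~\ref{construction}, the main obstacle is finding the $9\times 9$ corner tile: its signed entries must simultaneously (a) have all nine row sums and all nine column sums zero except for one row and one column equal to $2x+1$, and (b) up to sign be all distinct and form exactly the union of an initial and a final interval of $[1,x]$ whose complement has the precise size that can be partitioned by the two arm families and the shiftable block for \emph{every} admissible pair $p,q\ge 0$ at once. Reconciling (a) and (b) is what forces the EXACTCOVER-plus-vector-sum search of Section~\ref{construction}; once the corner and arm tiles are found, the rest of the proof is routine verification.
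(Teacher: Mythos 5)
Your overall architecture is exactly the paper's: an L-decomposition into a $9\times 9$ variable corner, arms built from $9\times 4$ tiles with zero row and column sums, and a suitably shifted shiftable even-by-even array in the remaining block, with all sums made to vanish modulo $2mn+1$ by setting the variable to $mn$, and the support checked by interval concatenation. However, as written your proposal has a genuine gap: the entire content of the theorem is the existence of the corner and arm tiles, and you never exhibit them --- you only list properties they should have and defer to a search you do not carry out and whose success you do not establish. You even flag this yourself (``the main obstacle is finding the $9\times 9$ corner tile''). An existence theorem is not proved by describing a search procedure for its witness.

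Moreover, the specific properties you impose on the corner differ from what the paper actually constructs, in ways that may make your version unachievable. You require the corner's entries to be constants or offsets of a single variable $x$, with support equal to an initial interval $[1,c]$ plus a terminal interval totalling $81$ values, and you ask the arms plus the shiftable block to tile one contiguous middle interval. The paper's corner instead uses a second variable $y=s+k$: its support is $[1,15]$, three intervals located near $6(s+k)$, $10(s+k)$ and $12(s+k)$, and $[mn-17,mn]$, and the $9\times 4$ tiles form a \emph{single} family $B_r$, $0\le r\le s+k-5$, split between the two arms precisely so that the union of their supports depends only on $s+k$ and interleaves correctly with the corner's middle intervals for every admissible $(s,k)$ simultaneously; your proposal of two independent arm families of sizes $p$ and $q$ would have to solve that interleaving problem separately. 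Also, the paper's corner does not have sum vectors with a single nonzero entry $2x+1$: its column sums are $[0,\,2x+1,\,-2x-1,\,0,\,2x+1,\,0,\,0,\,0,\,0]$, still $0$ modulo $2mn+1$, which shows that even the shape of the admissible sum vectors needed loosening to be realizable. So the missing tiles are not routine bookkeeping; finding them under your stricter constraints is an unproved step, and the paper's own solution in fact violates those constraints.
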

\begin{proof}
We write $m=4s+1$ and $n=4k+1$ where both $s,k \geq 2$.
Let $x = mn= {\left(4s + 1\right)} {\left(4k + 1\right)}$ and $y = s+k$, 
We begin with the $9 \times 9$ variable  tile which will go in the upper corner. Let

\[ A = \left[\begin{smallmatrix}
x - 17&x - 8&-x + 3&-x + 15&7&8&-9&-10&11\\
x - 7&5&x - 6&-x + 10&x - 1&6y - 5&-10y + 13&-6y + 6&10y - 14\\
4&x - 5&-x&x - 12&-x + 13&-6y + 4&10y - 12&6y - 3&-10y + 11\\
-x + 9&3&-x + 4&x - 14&x - 2&12y - 16&-12y + 15&-12y + 12&12y - 11\\
-x + 11&6&-2&1&x - 16&-12y + 14&12y - 13&12y - 10&-12y + 9\\
12&6y - 7&-6y + 2&12y - 8&-12y + 6&12y + 8&-12y - 15&12y + 3&-12y - 1\\
-13&-10y + 15&10y - 10&-12y + 7&12y - 5&-12y - 10&12y + 14&12y + 13&-12y - 11\\
-14&-6y + 8&6y - 1&-12y + 4&12y - 2&12y + 9&-12y&-12y - 6&12y + 2\\
15&10y - 16&-10y + 9&12y - 3&-12y + 1&-12y - 12&12y + 7&-12y - 5&12y + 4\end{smallmatrix}\right]. \]

Then $A$ has row sums $[0,\, 2x + 1,\, 0,\, 0,\, 0,\, 0,\, 0,\, 0,\, 0]$ and column sums $[0,\, 2x + 1,\, -2x - 1,\, 0,\, 2x+ 1,\, 0,\, 0,\, 0,\, 0].$
 The entries in $A$ cover the
intervals $I_{0 } = [1,15]$, $I_{3 } = [6s + 6k - 8,6s + 6k - 1]$, $I_{6 } = [10s + 10k - 16,10s + 10k - 9]$, $I_{8 } = [12s + 12k - 16,12s + 12k + 15]$ and $I_{11 } = [16s k + 4s + 4k - 16,16s k + 4s + 4k + 1]= [mn-17,mn]$.

For $0 \leq r \leq s + k - 5$, construct a $9 \times 4$ tile \[ B_r = \left[\begin{smallmatrix}
4r + 16&-4r - 17&-4r - 18&4r + 19\\
6s + 6k - 2r - 9&-10s - 10k + 2r + 17&-6s - 6k + 2r + 10&10s + 10k - 2r - 18\\
-6s - 6k - 2r&10s + 10k + 2r - 8&6s + 6k + 2r + 1&-10s - 10k - 2r + 7\\
12s + 12k + 24r + 16&-12s - 12k - 24r - 17&-12s - 12k - 24r - 20&12s + 12k + 24r + 21\\
-12s - 12k - 24r - 18&12s + 12k + 24r + 19&12s + 12k + 24r + 22&-12s - 12k - 24r - 23\\
12s + 12k + 24r + 24&-12s - 12k - 24r - 25&-12s - 12k - 24r - 28&12s + 12k + 24r + 29\\
-12s - 12k - 24r - 26&12s + 12k + 24r + 27&12s + 12k + 24r + 30&-12s - 12k - 24r - 31\\
12s + 12k + 24r + 32&-12s - 12k - 24r - 33&12s + 12k + 24r + 39&-12s - 12k - 24r - 38\\
-12s - 12k - 24r - 35&12s + 12k + 24r + 37&-12s - 12k - 24r - 36&12s + 12k + 24r + 34\end{smallmatrix}\right]. \]

Each $B_r$ has all row and column sums equal to zero.  
 The entries in $B_r$ cover the
intervals $[4r + 16,4r + 19]$, $[6s + 6k + 2r,6s + 6k + 2r + 1]$, $[6s + 6k - 2r - 10,6s + 6k - 2r - 9]$, $[10s + 10k + 2r - 8,10s + 10k + 2r - 7]$, $[10s + 10k - 2r - 18,10s + 10k - 2r - 17]$ and $[12s + 12k + 24r + 16,12s + 12k + 24r + 39]$.
Considering $0 \leq r \leq s + k - 5$, these blocks cover the 
intervals $I_{1 } = [16,4s + 4k - 1]$, $I_{4 } = [6s + 6k,8s + 8k - 9]$, $I_{2 } = [4s + 4k,6s + 6k - 9]$, $I_{7 } = [10s + 10k - 8,12s + 12k - 17]$, $I_{5 } = [8s + 8k - 8,10s + 10k - 17]$ and $I_{9 } = [12s + 12k + 16,36s + 36k - 81]$.

Additionally, let $E_0$ be a $(4s - 8 \times 4k - 8)$ shiftable Heffter integer array
and $E = E_0 \pm (36s + 36k - 81)$, covering the interval $I_{10 } = [36s + 36k - 80,16{\left(k - 2\right)} {\left(s - 2\right)} + 36s + 36k - 81]$.
Concatenating all of these intervals, we have covered $[1,16s k + 4s + 4k + 1] =[1,mn]= I_0 I_1 \cdots I_{11}$. 

In the block construction below it is now clear that each row and column sums to zero modulo $2mn+1$ and hence it is a $(4s + 1 \times 4k + 1)$ Heffter array, i.e. an $H(m,n)$.
\[ 
 \begin{array}{|c|c|}\hline
  A & \begin{array}{c|c|c} B_0 & \cdots & B_{n-3}\end{array} \\
  \hline
  B_{n-2}^T  &  \\
  \hhline{-|~|}
  \vdots&  E\\
  \hhline{-|~|}
  B_{m+n-5}^T &\\ \hline
 \end{array}
\]  \end{proof}

The second case we consider in this section is when $m,n \equiv 3 \pmod{4}$.  In this case we use a border with 7 rows and 7 columns.  

\begin{theorem}\label{3x3_mod4.theorem} There exists a Heffter array $H(m,n)$ for all  $m,n \equiv 3 \pmod{4}$, $m,n \geq 7$.
\end{theorem}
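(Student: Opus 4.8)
The plan is to mimic exactly the L-construction used in Theorem \ref{1x1_mod4.theorem}, but with a smaller $7\times 7$ corner tile instead of a $9\times 9$ one. Write $m=4s+3$ and $n=4k+3$ with $s,k\geq 1$, set $x=mn=(4s+3)(4k+3)$ and $y=s+k$ (or whatever linear combination of $s,k$ the bookkeeping requires), and look for a $7\times 7$ variable tile $A$ whose row sums and column sums are all zero except for one distinguished row and one distinguished column (and possibly an auxiliary column) whose sums are multiples of $2x+1$, so that modulo $2mn+1$ everything vanishes. The small absolute-value entries of $A$ should occupy an initial interval $[1,c]$ for a small constant $c$, and the large entries should occupy a final interval ending at $mn$; the ``middle-sized'' entries of $A$ will be of the form (linear in $s,k$) and will serve as the anchors for the families of $7\times 4$ tiles.

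Next I would construct, for $0\le r\le s+k-t$ (for the appropriate small constant $t$), a family of $7\times 4$ tiles $B_r$ with all row and column sums zero, whose entries telescope to cover several long intervals of the form $[a s+a k + \text{const}, \, b s + b k + \text{const}]$ that fit snugly against the intervals already covered by $A$; the transposes $B_r^T$ are $4\times 7$ and get stacked down the left border. The remaining uncovered cells form an $(m-7)\times(n-7)=(4s-4)\times(4k-4)$ rectangle, which is filled by a shiftable integer Heffter array $E_0$ from Theorem \ref{evenbyeventhm} (valid since $4s-4$ and $4k-4$ are even and at least $4$ once $s,k\ge 2$), shifted by the largest value used so far so that $\mathrm{sup}(E)$ is the final interval up to $mn$. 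Assembling
\[
 \begin{array}{|c|c|}\hline
  A & \begin{array}{c|c|c} B_0 & \cdots & B_{n-3}\end{array} \\
  \hline
  B_{n-2}^T  &  \\
  \hhline{-|~|}
  \vdots&  E\\
  \hhline{-|~|}
  B_{m+n-5}^T &\\ \hline
 \end{array}
\]
gives an $m\times n$ array whose every row and column sums to $0$ modulo $2mn+1$, whose support is $\{1,\dots,mn\}$ by the interval concatenation argument of Section \ref{justify}, and which is therefore the desired $H(m,n)$. The small cases $s=1$ or $k=1$ (i.e.\ $m=7$ or $n=7$, where the inner shiftable block degenerates) must be handled separately, either by a direct $7\times 7$ sporadic array and a single family of $7\times 4$ tiles with zero row/column sums as alluded to in Section \ref{construction}, or by transposing to reduce to the $m\ge 11$ subcase.

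The main obstacle is producing the explicit $7\times 7$ variable tile $A$ together with the matching $7\times 4$ family $B_r$: one needs the entries, taken up to sign, to partition into arithmetic-progression ``fingers'' that interlace into full intervals with no gaps and no repeats, while simultaneously controlling the row sums (all zero except the one row contributing $2x+1$) and the column sums (all zero except one or two columns contributing $\pm(2x+1)$). In Theorem \ref{1x1_mod4.theorem} this was found by the Skolem-sequence-plus-search method of Section \ref{construction}; the same machinery should apply here since $m,n\equiv 3\pmod 4$ corresponds exactly to the non-integer case where near-Skolem sequences (the $\{2\}$-near variety) are available. Once $A$ and $B_r$ are exhibited, verification is the now-routine interval-concatenation check, which can be automated as described in Section \ref{justify}; so the real work is the search, not the proof.
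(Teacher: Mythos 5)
Your plan is architecturally identical to the paper's proof of this theorem: write $m=4s+3$, $n=4k+3$, put a $7\times 7$ variable corner tile $A$ (with exactly one row and one column summing to $2x+1$ where $x=mn$, all others zero), attach a single family of $7\times 4$ tiles $B_r$ for $0\le r\le s+k-3$ along the top and (transposed) down the left, and fill the remaining $(4s-4)\times(4k-4)$ block with a shifted shiftable array from Theorem \ref{evenbyeventhm}. So you have correctly reverse-engineered the shape of the argument from Theorem \ref{1x1_mod4.theorem}.

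However, there is a genuine gap: for a constructive existence theorem of this kind, the explicit tiles \emph{are} the proof, and you have not produced them. Your proposal reduces the theorem to the assertion that a suitable $A$ and $B_r$ ``should'' exist because the same Skolem-type machinery applies; that is a research plan, not a proof. The delicate part — which the paper discharges by exhibiting $A$ and $B_r$ entry by entry and then verifying that the absolute values of the entries, as $r$ ranges over $0\le r\le s+k-3$, concatenate into intervals that tile $[1,mn]$ exactly (here $[1,13]$, $[14,4s+4k+5]$, \dots, up through the shifted support of $E$ and the four top values $[16sk+12s+12k+6,\,16sk+12s+12k+9]$) — is precisely the part you defer. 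Until that witness is exhibited and checked, the theorem is not established. One smaller point: the cases $s=1$ or $k=1$ do not in fact require separate treatment. In the paper's construction the index range for $B_r$ and the inner block $E_0$ simply degenerate (to nothing, or to a $0\times(4k-4)$ array), the corresponding intervals $I_j$ become empty, and the remaining intervals still concatenate to $[1,mn]$; e.g.\ for $m=n=7$ the array is just $A$ itself, whose entries cover $[1,49]$. Your proposed workaround (a separate sporadic $7\times 7$ array, or transposition) is harmless but unnecessary, and transposition would not help when $m=n=7$ anyway.
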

\begin{proof}
  We write $m=4s+3$ and $n=4k+3$ where both $s,k \geq 1$.
Let $x = mn= {\left(4s + 3\right)} {\left(4k + 3\right)}$ and $y = s+k$, 
We begin with the $7 \times 7$  variable tile which will go in the upper corner. Let

\[ A = \left[\begin{smallmatrix}
x - 3&x - 1&5&6&-7&-8&9\\
x&-x + 2&-2&6y + 5&-10y - 5&-6y - 4&10y + 4\\
4&-1&-3&-6y - 6&10y + 6&6y + 7&-10y - 7\\
10&6y + 3&-6y - 8&12y + 14&-12y - 21&12y + 9&-12y - 7\\
-11&-10y - 3&10y + 8&-12y - 16&12y + 20&12y + 19&-12y - 17\\
-12&-6y - 2&6y + 9&12y + 15&-12y - 6&-12y - 12&12y + 8\\
13&10y + 2&-10y - 9&-12y - 18&12y + 13&-12y - 11&12y + 10\end{smallmatrix}\right], \]

Then $A$ has  row sums $[2x + 1,\, 0,\, 0,\, 0,\, 0,\, 0,\, 0]$ and column sums $[2x + 1,\, 0,\, 0,\, 0,\, 0,\, 0,\, 0].$
 The entries in $A$ cover the
intervals $I_{0 } = [1,13]$, $I_{3 } = [6s + 6k + 2,6s + 6k + 9]$, $I_{6 } = [10s + 10k + 2,10s + 10k + 9]$, $I_{8 } = [12s + 12k + 6,12s + 12k + 21]$ and $I_{11 } = [16s k + 12s + 12k + 6,16s k + 12s + 12k + 9]$.

For $0 \leq r \leq s + k - 3$, construct a $7 \times 4$ tile
\[ B_r = \left[\begin{smallmatrix}
4r + 14&-4r - 15&-4r - 16&4r + 17\\
6s + 6k - 2r + 1&-10s - 10k + 2r - 1&-6s - 6k + 2r&10s + 10k - 2r\\
-6s - 6k - 2r - 10&10s + 10k + 2r + 10&6s + 6k + 2r + 11&-10s - 10k - 2r - 11\\
12s + 12k + 16r + 22&-12s - 12k - 16r - 23&12s + 12k + 16r + 29&-12s - 12k - 16r - 28\\
-12s - 12k - 16r - 25&12s + 12k + 16r + 27&-12s - 12k - 16r - 26&12s + 12k + 16r + 24\\
12s + 12k + 16r + 30&-12s - 12k - 16r - 31&-12s - 12k - 16r - 34&12s + 12k + 16r + 35\\
-12s - 12k - 16r - 32&12s + 12k + 16r + 33&12s + 12k + 16r + 36&-12s - 12k - 16r - 37\end{smallmatrix}\right], \]

Each $B_r$ has all row and column sums equal to zero.  
The entries in $B_r$ cover the
intervals $[4r + 14,4r + 17]$, $[6s + 6k - 2r,6s + 6k - 2r + 1]$, $[10s + 10k - 2r,10s + 10k - 2r + 1]$, $[6s + 6k + 2r + 10,6s + 6k + 2r + 11]$, $[10s + 10k + 2r + 10,10s + 10k + 2r + 11]$ and $[12s + 12k + 16r + 22,12s + 12k + 16r + 37]$.
Considering $0 \leq r \leq s + k - 3$, these blocks cover the 
intervals $I_{1 } = [14,4s + 4k + 5]$, $I_{2 } = [4s + 4k + 6,6s + 6k + 1]$, $I_{5 } = [8s + 8k + 6,10s + 10k + 1]$, $I_{4 } = [6s + 6k + 10,8s + 8k + 5]$, $I_{7 } = [10s + 10k + 10,12s + 12k + 5]$ and $I_{9 } = [12s + 12k + 22,28s + 28k - 11]$.

Additionally, let $E_0$ be a $(4s - 4 \times 4k - 4)$ shiftable integer Heffter array
and $E = E_0 \pm (28s + 28k - 11)$, covering the interval $I_{10 } = [28s + 28k - 10,16{\left(k - 1\right)} {\left(s - 1\right)} + 28s + 28k - 11]$.
Concatenating these intervals, we have covered $[1,16s k + 12s + 12k + 9]=[1,mn] = I_0 I_1 \cdots I_{11}$.

In the block construction below it is now clear that each row and column sums to zero modulo $2mn+1$ and hence it is a $(4s + 3 \times 4k + 3)$ Heffter array, i.e. an $H(m,n)$.
\[ 
 \begin{array}{|c|c|}\hline
  A & \begin{array}{c|c|c} B_0 & \cdots & B_{n-2}\end{array} \\
  \hline
  B_{n-1}^T  &  \\
  \hhline{-|~|}
  \vdots&  E\\
  \hhline{-|~|}
  B_{m+n-3}^T &\\ \hline
 \end{array}
\]
\end{proof}

The last case is when $m\equiv 1 \pmod 4$  and  all $n \equiv 3 \pmod{4}$.  Here we will use a border with 9 rows and 7 columns.

\begin{theorem}\label{1x3_mod4.theorem} There exists an integer Heffter array $H(m,n)$ for all  $m\equiv 1 \pmod 4$ with $m \geq 9$ and  all $n \equiv 3 \pmod{4}$ with $n \geq 7$.
\end{theorem}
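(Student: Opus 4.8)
The plan is to mimic the L-constructions of Theorems \ref{1x1_mod4.theorem} and \ref{3x3_mod4.theorem}, the new feature being that the two residues are now different, so the corner tile is a \emph{rectangular} $9 \times 7$ block rather than a square one, and this time the target array must be \emph{integer}. Write $m = 4s+1$ and $n = 4k+3$ with $s \ge 2$, $k \ge 1$, and set $x = mn$ and $y = s+k$. Since $mn \equiv 3 \pmod 4$, Lemma \ref{necessary} permits an integer array; and because every non-corner ingredient below will have all row and column sums equal to $0$ over $\ZZ$, the corner tile itself must be \emph{honestly} zero-sum (all nine row sums and all seven column sums $=0$ over $\ZZ$, not merely modulo $2mn+1$). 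That is the one real departure from the two preceding cases, whose corners carried a row and column sum of $2x+1$.

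First I would exhibit a $9 \times 7$ variable corner tile $A$ whose entries are small constants or of the shapes $\pm(x-c)$ and $\pm(cy-d)$, arranged so that the $x$-terms and the $y$-terms cancel within every row and within every column; the design goal is that $\mathrm{sup}(A)$ consist of exactly a low interval $[1,c_1]$ together with a high interval $[mn-c_2,mn]$, leaving a single long run of middle values to be covered by the other ingredients. Next I would build two sliding families of zero-sum tiles: a family of $9\times 4$ tiles $B_r$, $0\le r\le k-2$, to fill the $9\times(n-7)$ horizontal arm, and a family of $7\times 4$ tiles $C_r$, $0\le r\le s-3$, to fill the $(m-9)\times 7$ vertical arm (used transposed); as in Section~\ref{justify}, each tile covers a handful of $O(1)$-length intervals whose endpoints are linear in $r$, which over the range of $r$ merge into long intervals. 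Finally I would take $E_0 = H_s\!\bigl(4(s-2),\,4(k-1)\bigr)$, which exists by Theorem~\ref{evenbyeventhm}, and shift it to $E = E_0 \pm t$ for the offset $t$ that places $\mathrm{sup}(E)$ on the block of still-unused middle values, and assemble
\[
 \begin{array}{|c|c|}\hline
  A & \begin{array}{c|c|c} B_0 & \cdots & B_{k-2}\end{array} \\
  \hline
  \begin{array}{c} C_0^T \\ \vdots \\ C_{s-3}^T \end{array} &  E\\ \hline
 \end{array}.
\]
When $s=2$ (so $m=9$) the vertical arm and $E$ are empty; when $k=1$ (so $n=7$) the horizontal arm and $E$ are empty; when both hold, the array is just $A=H(9,7)$.

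Correctness then has two parts. Since $A$ and every $B_r$, $C_r$, and $E$ has all row and column sums $0$ over $\ZZ$, so does the whole array, which gives conditions (1) and (2) for an integer Heffter array. For condition (3) one checks, via the interval bookkeeping of Section~\ref{justify} (merging each family's recorded short intervals into long intervals, with the occasional regrouping of four interlacing arithmetic progressions into one interval plus two sporadic values), that $\mathrm{sup}(A)$, the $\mathrm{sup}(B_r)$'s, the $\mathrm{sup}(C_r)$'s, and $\mathrm{sup}(E)$ partition $[1,mn]$; relabelling then yields $[1,mn]=I_0I_1\cdots I_f$.

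The crux is the construction step: producing a fully $\ZZ$-balanced $9\times 7$ variable corner together with both sliding families and the $E$-offset so that everything is simultaneously zero-sum \emph{and} the supports tile $[1,mn]$ with no overlaps and no gaps. A $9\times 7$ tile balanced over $\ZZ$ is more constrained than the modular corners used in the two previous theorems, and one must reconcile \emph{two} sliding families (rather than one) with the shiftable block; the constants $c_1,c_2$ and the offset $t$ are pinned down only after the families are chosen. These ingredients were found by the Skolem-type and exact-cover search described in Section~\ref{construction} (details in \cite{Tom-thesis}); once they are in hand, the remaining verification is the routine automated bookkeeping of Section~\ref{justify}.
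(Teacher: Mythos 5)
Your plan coincides with the paper's proof of this case: the same L-shaped decomposition with a $9\times 7$ integrally zero-sum variable corner $A$, a horizontal family of $9\times 4$ zero-sum tiles $B_r$ ($0\le r\le k-2$), a vertical family of transposed $7\times 4$ zero-sum tiles $C_r$ ($0\le r\le s-3$), and a shifted $H_s(4s-8,4k-4)$ core, with the support verified by the interval bookkeeping of Section \ref{justify}; you also correctly isolate the one new feature, namely that the corner must sum to zero over $\ZZ$ rather than carrying a $2x+1$ excess. The only thing you leave implicit is the explicit entries of $A$, $B_r$, and $C_r$ (and hence the offset $28s+36k-37$ for $E$), which the paper supplies as the output of the search you cite.
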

\begin{proof}
We write $m=4s+1$ and $n=4k+3$ with  $s \geq 2$ and $k \geq 1$.
Let $x = mn= {\left(4s + 1\right)} {\left(4k + 3\right)}$ and $y = s+k$, 
We begin with the $9 \times 7$ variable tile which will go in the upper corner. Let

\[ A = \left[\begin{smallmatrix}
x - 7&3&-x + 4&8&-9&-10&11\\
-x + 3&x - 1&-2&6y + 1&-10y + 3&-6y&10y - 4\\
x - 5&-x&5&-6y - 2&10y - 2&6y + 3&-10y + 1\\
-x + 2&4&x - 6&12y - 4&-12y + 3&-12y&12y + 1\\
7&-6&-1&-12y + 2&12y - 1&12y + 2&-12y - 3\\
12&6y - 1&-6y - 4&12y + 17&12y + 4&-12y - 10&-12y - 18\\
-13&-10y + 5&10y&-12y - 13&12y + 19&-12y - 12&12y + 14\\
-14&-6y + 2&6y + 5&12y + 6&-12y - 8&12y + 16&-12y - 7\\
15&10y - 6&-10y - 1&-12y - 15&-12y - 9&12y + 11&12y + 5\end{smallmatrix}\right] \]

Then $A$  has all row and column sums equal to zero. The entries in $A$ cover the
intervals $I_{0 } = [1,15]$, $I_{5 } = [6s + 6k - 2,6s + 6k + 5]$, $I_{10 } = [10s + 10k - 6,10s + 10k + 1]$, $I_{13 } = [12s + 12k - 4,12s + 12k + 19]$ and $I_{17 } = [16s k + 12s + 4k - 4,16s k + 12s + 4k + 3]$.

For $0 \leq r \leq k - 2$, construct a $9 \times 4$ tile
\[ B_r = \left[\begin{smallmatrix}
4r + 16&-4r - 17&-4r - 18&4r + 19\\
6s + 6k - 2r - 3&-10s - 10k + 2r + 7&-6s - 6k + 2r + 4&10s + 10k - 2r - 8\\
-6s - 6k - 2r - 6&10s + 10k + 2r + 2&6s + 6k + 2r + 7&-10s - 10k - 2r - 3\\
12s + 12k + 24r + 20&-12s - 12k - 24r - 21&-12s - 12k - 24r - 24&12s + 12k + 24r + 25\\
-12s - 12k - 24r - 22&12s + 12k + 24r + 23&12s + 12k + 24r + 26&-12s - 12k - 24r - 27\\
12s + 12k + 24r + 28&-12s - 12k - 24r - 29&-12s - 12k - 24r - 32&12s + 12k + 24r + 33\\
-12s - 12k - 24r - 30&12s + 12k + 24r + 31&12s + 12k + 24r + 34&-12s - 12k - 24r - 35\\
12s + 12k + 24r + 36&-12s - 12k - 24r - 37&12s + 12k + 24r + 43&-12s - 12k - 24r - 42\\
-12s - 12k - 24r - 39&12s + 12k + 24r + 41&-12s - 12k - 24r - 40&12s + 12k + 24r + 38\end{smallmatrix}\right]. \]

Each $B_r$ has all row and column sums equal to zero.  
The entries in $B_r$ cover the
intervals $[4r + 16,4r + 19]$, $[6s + 6k - 2r - 4,6s + 6k - 2r - 3]$, $[6s + 6k + 2r + 6,6s + 6k + 2r + 7]$, $[10s + 10k + 2r + 2,10s + 10k + 2r + 3]$, $[10s + 10k - 2r - 8,10s + 10k - 2r - 7]$ and $[12s + 12k + 24r + 20,12s + 12k + 24r + 43]$.
Considering $0 \leq r \leq k - 2$, these blocks cover the 
intervals $I_{1 } = [16,4k + 11]$, $I_{4 } = [6s + 4k,6s + 6k - 3]$, $I_{6 } = [6s + 6k + 6,6s + 8k + 3]$, $I_{11 } = [10s + 10k + 2,10s + 12k - 1]$, $I_{9 } = [10s + 8k - 4,10s + 10k - 7]$ and $I_{14 } = [12s + 12k + 20,12s + 36k - 5]$.

For $0 \leq r \leq s - 3$, construct a $7 \times 4$ tile
\[ C_r = \left[\begin{smallmatrix}
4k + 4r + 12&-4k - 4r - 13&-4k - 4r - 14&4k + 4r + 15\\
6s + 4k - 2r - 1&-10s - 8k + 2r + 5&-6s - 4k + 2r + 2&10s + 8k - 2r - 6\\
-6s - 8k - 2r - 4&10s + 12k + 2r&6s + 8k + 2r + 5&-10s - 12k - 2r - 1\\
12s + 36k + 16r - 4&-12s - 36k - 16r + 3&12s + 36k + 16r + 3&-12s - 36k - 16r - 2\\
-12s - 36k - 16r + 1&12s + 36k + 16r + 1&-12s - 36k - 16r&12s + 36k + 16r - 2\\
12s + 36k + 16r + 4&-12s - 36k - 16r - 5&-12s - 36k - 16r - 6&12s + 36k + 16r + 7\\
-12s - 36k - 16r - 8&12s + 36k + 16r + 9&12s + 36k + 16r + 10&-12s - 36k - 16r - 11\end{smallmatrix}\right]. \]

Each $C_r$ has all row and column sums equal to zero. The entries in $C_r$ cover the
intervals $[4k + 4r + 12,4k + 4r + 15]$, $[10s + 12k + 2r,10s + 12k + 2r + 1]$, $[6s + 8k + 2r + 4,6s + 8k + 2r + 5]$, $[6s + 4k - 2r - 2,6s + 4k - 2r - 1]$, $[10s + 8k - 2r - 6,10s + 8k - 2r - 5]$ and $[12s + 36k + 16r - 4,12s + 36k + 16r + 11]$.
Considering $0 \leq r \leq s - 3$, these blocks cover the 
intervals $I_{2 } = [4k + 12,4s + 4k + 3]$, $I_{12 } = [10s + 12k,12s + 12k - 5]$, $I_{7 } = [6s + 8k + 4,8s + 8k - 1]$, $I_{3 } = [4s + 4k + 4,6s + 4k - 1]$, $I_{8 } = [8s + 8k,10s + 8k - 5]$ and $I_{15 } = [12s + 36k - 4,28s + 36k - 37]$.

Finally, let  $E_0$ be a $(4s - 8 \times 4k - 4)$ shiftable Heffter array
and $E = E_0 \pm (28s + 36k - 37)$, covering the interval $I_{16 } = [28s + 36k - 36,16{\left(k - 1\right)} {\left(s - 2\right)} + 28s + 36k - 37]$.
Concatenating these intervals, we have covered $[1,16k s + 12k + 4s + 3] =[1,mn]= I_0 I_1 \cdots I_{17}$.

In the block construction below it is now clear that each row and column sums to zero (in $\ZZ$) and hence it is a $(4s + 1 \times 4k + 3)$ integer Heffter array, i.e. an integer $H(m,n)$.
\[ 
 \begin{array}{|c|c|}\hline
  A & \begin{array}{c|c|c} B_0 & \cdots & B_{k-2}\end{array} \\
  \hline
  C_{0}^T  &  \\
  \hhline{-|~|}
  \vdots&  E\\
  \hhline{-|~|}
  C_{s-3}^T &\\ \hline
 \end{array}
\]
\end{proof}

\section {Conclusion}\label{Conclusion}

In this paper we have constructed  Heffter arrays $H(m,n)$ for all  $m,n\geq 3$. Note that by Theorem
\ref{necessary} an integer $H(m,n)$ does not exist unless $mn \equiv 0,3$ (mod 4) and in all of these cases we did indeed construct an integer Heffter array.   Below is a table showing all of the cases that have been considered in this paper and the theorem covering that case.

\begin{center}
\begin{tabular}{|l|l|} \hline
$m,n$ both even &Theorem \ref{evenbyeventhm} \\ \hline
$m=3$, all $n$& Theorem \ref{3byn_theorem} \\ \hline
$m=5$, all $n$& Theorem \ref{5byn_theorem} \\ \hline
$m$ odd, $n\equiv 0 \pmod 4$& Theorem \ref{oddby4k_theorem} \\ \hline
$m$ odd, $n\equiv 2 \pmod 4$& Theorem \ref{oddby4k+2_theorem} \\ \hline
$m\equiv 1 \pmod 4$, $n \equiv 1 \pmod{4}$ &Theorem \ref{1x1_mod4.theorem} \\ \hline
$m\equiv 3 \pmod 4$, $n \equiv 3 \pmod{4}$ &Theorem \ref{3x3_mod4.theorem} \\ \hline
$m\equiv 1 \pmod 4$, $n \equiv 3 \pmod{4}$ &Theorem \ref{1x3_mod4.theorem} \\ \hline
\end{tabular}
\end{center}

We believe that Heffter arrays are very interesting combinatorial objects. There is also a very useful application of Heffter arrays to biembedding complete graphs and  we think that many of the Heffter arrays constructed in this paper can be used to solve biembedding problems.  A good start towards this is provided in  \cite {DM} where the $H(3,n)$ constructed in Theorem \ref{3byn_theorem}  were reordered and used to prove that for every $v \equiv 1 \pmod6$ there is a biembedding of a Steiner triple system (3-cycle system) and a simple $n$-cycle system both on $v$ points.  We certainly think this can be done for  other $m$ and $n$ based on the Heffter arrays provided in this paper.


\bigskip \bigskip
\noindent {\bf Addendum:}   Sadly, Dan Archdeacon passed away in February 2015 while this manuscript was in preparation.  We dedicate this paper to his memory.




\begin{thebibliography}{99}

\bibitem{A} D. Archdeacon, Heffter arrays and biembedding graphs on surfaces, {\em Elect. J. of Combin.} {\bf 22} (2015), \#P1.74.

\bibitem{ABD}D. Archdeacon, T. Boothby, J. Dinitz, Tight Heffter arrays exist for all possible values, submitted.

\bibitem{ADDY} D. Archdeacon, J. Dinitz, D. Donovan, and S. Yazici, Square integer Heffter arrays with empty cells, {\em Des. Codes and Cryptogr}, to appear.

\bibitem{ADS} D. Archdeacon, J. Dinitz, and D. Stinson, A.R.W. Mattern, On partial sums in cyclic groups, {\em J. Combin. Math. Combin. Comput.}, to appear. 




\bibitem{Tom-thesis}  T. Boothby,  Some results on structure in graphs and numbers, Ph.D. thesis, Simon Fraser University, Canada, 2015.

\bibitem{DM} J.H. Dinitz and A.R.W. Mattern, Biembedding Steiner triple systems and $n$-cycle systems on orientable surfaces,  arXiv:1505.04070 [math.CO], 2015.

\bibitem{GG} M.J. Grannell and T.S. Griggs, Designs and topology. In Surveys in Combinatorics 1997, A. Hilton and J. Talbot, eds. London Mathekatical Society Lecture Note Series 346. Cambridge, UK: Cambridge University Press, (2007), 121--174.

\bibitem{H} L. Heffter, Uber Triplesysteme, {\em Math. Ann.} {\bf 49} (1897), 101-112. 

\bibitem{Pelt} R. Peltesohn, Eine {L}\"osung der beiden {H}effterschen {D}ifferenzenprobleme, {\em Compositio Math.} {\bf 6} (1939), 251--257. 


\bibitem{S} N. Shalaby, Skolem sequences: generalizations and applications, Ph.D. thesis, McMaster University, Canada, 1992.

\bibitem{X1} N. Shalaby, Skolem and Langford sequences, in Handbook of Combinatorial Designs (2nd Edition) (C. Colbourn and J. Dinitz, eds.), Chapman and Hall/CRC, Boca Raton, 2007, 612 -- 616.


\end{thebibliography}
\end{document}